\newcounter{item}
\begin{document}
\title{Backward martingale transport and Fitzpatrick functions in
  pseudo-Euclidean spaces}

\author{Dmitry Kramkov\footnote{Carnegie Mellon University, Department
    of Mathematical Sciences, 5000 Forbes Avenue, Pittsburgh, PA,
    15213-3890, USA,
    \href{mailto:kramkov@cmu.edu}{kramkov@cmu.edu}. The author also
    has a research position at the University of Oxford.}  and Mihai
  S\^{i}rbu\footnote{The University of Texas at Austin, Department of
    Mathematics, 2515 Speedway Stop C1200, Austin, Texas 78712,
    \href{mailto:sirbu@math.utexas.edu}{sirbu@math.utexas.edu}. The
    research of this author was supported in part by the National
    Science Foundation under Grant DMS 1908903.}  }

\date{\today}

\maketitle

\begin{abstract}
  We study an optimal transport problem with a backward martingale
  constraint in a pseudo-Euclidean space $S$.  We show that the dual
  problem consists in the minimization of the expected values of the
  Fitzpatrick functions associated with maximal $S$-monotone sets. An
  optimal plan $\gamma$ and an optimal maximal $S$-monotone set $G$
  are characterized by the condition that the support of $\gamma$ is
  contained in the graph of the $S$-projection on $G$. For a Gaussian
  random variable $Y$, we get a unique decomposition: $Y = X+Z$, where
  $X$ and $Z$ are independent Gaussian random variables taking values,
  respectively, in complementary positive and negative linear
  subspaces of the $S$-space.
\end{abstract}

\begin{description}
\item[Keywords:] martingale optimal transport, pseudo-Euclidean space,
  Fitzpatrick function.
\item[AMS Subject Classification (2010):] 60G42, 91B24, 91B52.
\end{description}


\section{Introduction}

Let $S$ be a symmetric $d\times d$ matrix with full rank and $m$
positive eigenvalues. The bilinear form
\begin{displaymath}
  S(x,y) \set \ip{x}{Sy}  = \sum_{i,j=1}^d x^i S_{ij}y^j, \quad
  x,y\in \real{d},
\end{displaymath}
defines the scalar product on a pseudo-Euclidean space $\real{d}_m$
with dimension $d$ and index $m$, which we call the $S$-space. The
quadratic form $S(x,x)$ is called the \emph{scalar square}; its value
may be negative.

Given a Borel probability measure $\nu$ on $\real{d}$ with finite
second moments, we study the backward martingale transport problem:
\begin{equation}
  \label{eq:1}
  \text{maximize} \quad  \frac 12 \int S(x, y)
  d\gamma\quad\text{over}\quad 
  \gamma\in \Gamma(\nu), 
\end{equation}
where $\Gamma(\nu)$ is the family of Borel probability measures
$\gamma = \gamma(dx,dy)$ on $\real{2d}$ having $\nu$ as their
$y$-marginal and the martingale property: $\gamma(y|x) = x$. Contrary
to the usual setup of optimal transport, with or without the
martingale constraints, the $x$-marginal of $\gamma$ is not an input,
but a part of the solution. In the case $m=1$ and $S$ given
by~\eqref{eq:2}, problem~\eqref{eq:1} has been studied
in~\cite{KramXu:22}, where it was motivated by an application from
financial economics.

We point out that the problem of unconstrained optimal transport with
\emph{given} $x$-marginal $\mu$ and the quadratic cost function
$c(x,y) = -S(x,y)$ can be reduced to the classical
$\lsp{2}{d}$-framework of~\cite{Bren:91} by a linear
transformation. The same problem with a martingale constraint is
trivial, as
\begin{displaymath}
  \int S(x, y)
  d\gamma = \int S(x,x) d\mu. 
\end{displaymath}
For \emph{nonquadratic} cost functions, such martingale optimal
transport has been the subject of intensive research, see, for
example, \cite{BeigJuil:16, BeigCoxHues:17, BeigNutzTouz:17,
  HenrTouz:16}, and \cite{GhousKimLim:19}.

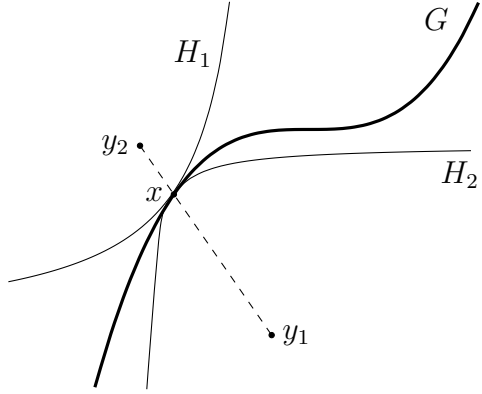
\begin{figure}
  \centering
  \begin{tikzpicture}[scale = 0.5]
   
    \draw[dashed, thin] (-4.5,0.568) node [left] {$y_2$} --
    (-3.6,-0.728) node [left] {$x$} --(-1, -4.472) node [right]
    {$y_1$};

    \filldraw [black] (-4.5,0.568) circle [radius=2pt] (-3.6,-0.728)
    circle [radius=2pt] (-1,-4.472) circle [radius=2pt];

    \draw[domain=-4.32:4.3,smooth,variable=\x,black] plot
    ({\x},{-1.1664/(\x+4.5) +0.568}); \draw (4,0.4452) node [below]
    {$H_2$};

    \draw[domain=-8:-2.12,smooth,variable=\x,black] plot
    ({\x},{-9.9344/(\x+1) - 4.472}); \draw (-2.3,3.016) node [left]
    {$H_1$};

    \draw[very thick,domain=-5.7:4.5,smooth,variable=\x,black] plot
    ({\x},{(\x/3)^3 + 1}); \draw (4, 3.37) node [above left] {$G$};

  \end{tikzpicture}
  \caption{The convex set $Q_G(x)\subset P_G^{-1}(x)$ contains the
    points $y_i$, $i=1,2$. Hyperbola $H_i$ with focus at $y_i$ is
    tangent to $G$ at $x$.}
  \label{fig:1}
\end{figure}

Our main Theorem~\ref{th:1} states that $\gamma$ is an optimal plan
for~\eqref{eq:1} if and only if its support is contained in the
$y$-based graph of the projection multi-function
\begin{displaymath}
  P_G(y)\set \argmin_{x\in G} S(x-y,x-y), \quad y\in \real{d}, 
\end{displaymath}
on a maximal $S$-monotone set $G$. Here, a set $G\subset \real{d}$ is
called $S$-\emph{monotone} or $S$-\emph{positive} if
\begin{displaymath}
  S(x-y,x-y) \geq 0, \quad x,y\in G,   
\end{displaymath}
and \emph{maximal} $S$-monotone, if it is not a strict subset of an
$S$-monotone set. If $d=2m$ and the matrix $S$ has the form:
\begin{equation}
  \label{eq:2}
  S = 
  \begin{pmatrix}
    0 & I \\
    I & 0
  \end{pmatrix},
\end{equation}
where $I$ is the $m$-dimensional identity matrix, then the
$S$-monotonicity becomes the classical monotonicity: see, for example,
\cite[p.~240]{Rock:70}. Geometrically, $P_G(y)$ consists of those
$x\in G$ where the hyperboloid
\begin{displaymath}
  H_G(y) \set \descr{z\in \real{d}}{S(y-z,y-z)= \min_{w\in G} S(y-w,y-w)}
\end{displaymath}
is \emph{tangent} to $G$, as in Figure \ref{fig:1}. If $x\in P_G(y)$,
then the vector $y-x$ is $S$-\emph{normal} to $G$ at $x$ in the sense
that
\begin{displaymath}
  \limsup_{z\to x, z\in G} \frac{S(y-x,z-x)}{\abs{z-x}} \leq 0. 
\end{displaymath}

The map $y\to P_G(y) \ni x$ taking values in the closed subsets of $G$
describes the \emph{backward} evolution of the canonical martingale
$(x,y)$ under $\gamma$.  In terms of the \emph{forward evolution}, an
optimal plan is characterized as
\begin{displaymath}
  x\in G, \quad y\in Q_G(x), \quad \as{\gamma}, 
\end{displaymath}
where $Q_G(x)$ is the smallest closed convex subset of
\begin{displaymath}
  P^{-1}_G(x)\set \descr{y\in \real{d}}{P_G(y)=x}
\end{displaymath}
containing $x$ in its relative interior. If $y\in Q_G(x)$, then the
vector $y-x$ is $S$-\emph{orthogonal} to $G$ at $x$ in the sense that
\begin{displaymath}
  \lim_{z\to x, z\in G} \frac{S(y-x,z-x)}{\abs{z-x}} = 0. 
\end{displaymath}

The associated dual problem to~\eqref{eq:1} turns out to be
\begin{displaymath}
  \text{minimize} \quad \int {\psi_G(y)} d\nu \quad\text{over}\quad G \in
  \mset{S}, 
\end{displaymath}
where $\mset{S}$ is the family of all maximal $S$-monotone sets and
\begin{displaymath}
  \psi_G(y) \set \sup_{x\in G} \cbraces{S(x,y) - \frac12 S(x,x)},
  \quad y\in \real{d}. 
\end{displaymath}
For $S$ given by~\eqref{eq:2}, $\psi_G$ is the classical Fitzpatrick
function from convex analysis, see~\cite{Fitzpatrick:88}
and~\cite{PenotZalin:05}.  We shall use this term also for general
$S$-spaces. The Fitzpatrick functions $\psi_G$ play in our analysis
the role of the Kantorovich potentials from unconstrained optimal
transport.  Monotone sets in $S$-spaces and their corresponding
Fitzpatrick functions have been introduced in~\cite{Simons:07} and
further studied in~\cite{Penot:09}.

Problems~\eqref{eq:1} and~\eqref{eq:2} can be solved explicitly if
$\nu$ is the law of a Gaussian random variable $Y$ with positive
definite covariance matrix $\Sigma$.  Theorem~\ref{th:10} shows that
the optimal plan $\gamma$ for~\eqref{eq:1} is given by the law of
$(X,Y)$, where $X$ is the unique random variable such that $X$ and
$Z\set Y-X$ are independent Gaussian and their respective supports are
complementary positive and negative linear subspaces of the $S$-space:
\begin{displaymath}
  S(X,X) \geq 0, \quad S(Z,Z) \leq 0, \quad S(X,Z) = 0.
\end{displaymath}
The covariance matrices $Q$ for $X$ and $R$ for $Z$ are uniquely
defined by the conditions:
\begin{displaymath}
  \Sigma =Q+R, \quad QSQ \geq 0, \quad RSR \leq 0, \quad QSR =
  0,   
\end{displaymath}
where inequalities mean that the respective matrices are positive and
negative semi-definite.

The original motivation for the backward martingale transport comes
from the Kyle's equilibrium for insider trading introduced
in~\cite{Kyle:85}.  For $m=1$ and $S$ given by \eqref{eq:2}, the
paper~\cite{KramXu:22} provides sharp conditions for the existence and
uniqueness of a version of such equilibrium from~\cite{RochVila:94}.
Our main Theorem~\ref{th:1} sets up the base for an extension of these
criteria to higher dimensions.

Our results also provide pseudo-Euclidean counterparts to the
solutions of the classical problems of linear and nonlinear PCA
(principal component analysis).  The optimal linear hyperplane
$G = \range{Q}$ given by Theorems \ref{th:10} and \ref{th:6} is the
PCA fit of the multivariate Gaussian distribution in the $S$-space:
$G$ is the span of the eigenvectors corresponding to the largest
(equivalently, positive) $m$ eigenvalues of the characteristic
equation $\det(S-\lambda \Sigma ^{-1})=0$.  The optimal maximal
$S$-monotone set $G$ from Theorem \ref{th:1} is a pseudo-Euclidean
version of a self-consistent or principle curve
from~\cite{HasStu:89}. Notice that being maximal $S$-monotone, the set
$G$ is a maximal \emph{Euclidean-type} subset of $\real{d}$ in the
$S$-space: $S(x-y,x-y)\geq 0$, $x,y\in G$.  As a result, $G$ has
qualitative properties of a standard Euclidean PCA fit.

For both applications above, it is important to know whether an
optimal plan $\gamma$ admits the \emph{map representation}: there is a
Borel function $f$ on $\real{d}$ such that
\begin{displaymath}
  x = f(y), \quad \as{\gamma(dx,dy)} 
\end{displaymath}
The follow-up paper \cite{KramSirb:23} contains sharp conditions for
the existence of such representation as well as for the uniqueness of
an optimal plan $\gamma$.

The current paper is organized as follows. The existence and
characterizations of an optimal plan $\gamma$ and an optimal set $G$
are established in Theorem~\ref{th:1} in
Section~\ref{sec:3}. Properties of optimal $\gamma$ and $G$ are
collected in Section~\ref{sec:prop-an-optim}. The linear case is
studied in Section~\ref{sec:linear}. Finally,
Appendix~\ref{sec:basic-prop-fitzp} lists the main features of
$S$-Fitzpatrick functions.

\subsection*{Notation}
\label{sec:notations}

The scalar product and the norm in the Euclidean space $\real{d}$ are
written as
\begin{displaymath}
  \ip{x}{y} \set \sum_{i=1}^d x_i y_i, \quad \abs{x} \set
  \sqrt{\ip{x}{x}}, \quad x,y\in \real{d}.
\end{displaymath}

For a Borel probability measure $\mu$ on $\real{d}$, a
$\mu$-integrable $m$-dimensional Borel function $f=(f_1,\dots,f_m)$,
and an $n$-dimensional Borel function $g = (g_1,\dots,g_n)$, the
notation $\mu(f|g)$ stands for the $m$-dimensional vector of
conditional expectations of $f_i$ given $g$ under $\mu$:
\begin{displaymath}
  \mu(f|g) = (\mu(f_1|g_1,\dots,g_n),\dots,\mu(f_m|g_1,\dots,g_n)).
\end{displaymath}
In particular, we write $\mu(f)$ for the vector of expected values:
\begin{displaymath}
  \mu(f) = \int f d\mu = (\int f_1 d\mu, \dots, \int f_m d\mu) =
  (\mu(f_1),\dots,\mu(f_m)). 
\end{displaymath}
We write $\supp{\mu}$ for the support of $\mu$, the smallest closed
set of full measure.  For $p\geq 1$, we denote by $\ps{p}{\real{d}}$
the family of Borel probability measures $\mu$ on $\real{d}$ such that
$\mu(\abs{x}^p) = \int \abs{x}^p d\mu < \infty$.

For a closed (equivalently, lower semi-continuous) convex function
$\map{f}{\real{d}}{\realext}$, we denote by $\partial f$ its
subdifferential and by $f^*$ its convex conjugate:
\begin{align*}
  \partial f(x) &= \descr{z\in \real{d}}{f(y)-f(x) \geq \ip{z}{y-x},
                  \; y\in \real{d}}, \quad x\in \real{d}, \\
  f^*(y) &\set \sup_{x\in \real{d}}\cbraces{\ip{y}{x} - f(x)}, \quad
           y\in \real{d}.
\end{align*}
We recall that, \cite{Rock:70}, Theorem~23.5, p.~218,
\begin{equation}
  \label{eq:3}
  f(x)=\ip{x}{y}-f^*(y)\iff y\in 
  \partial f(x) \iff x\in \partial f^*(y), \quad x,y\in \real{d}. 
\end{equation} 

The domains of $f$ and of $\partial f$ are defined as
\begin{align*}
  \dom f &\set \descr{x\in \real{d}}{f(x)<\infty}, \\
  \dom \partial f & \set \descr{x\in \dom f}{\partial f(x) \not= \emptyset}. 
\end{align*}

\section{Backward martingale transport}
\label{sec:3}

We denote by $\msym{d}{m}$ the family of symmetric
$d\times d$-matrices of full rank with $m \in \braces{0,1,\dots,d}$
positive eigenvalues. For $S \in \msym{d}{m}$, the bilinear form
\begin{displaymath}
  S(x,y) \set \ip{x}{Sy}  = \sum_{i,j=1}^d x^i S_{ij}y^j, \quad
  x,y\in \real{d},
\end{displaymath}
defines the scalar product on a pseudo-Euclidean space $\real{d}_m$
with dimension $d$ and index $m$, which we call the $S$-space. The
quadratic form $S(x,x)$ is called the \emph{scalar square}; its value
may be negative.

We view a point $(x,y)$ in the product space
$\real{2d} = \real{d}\times \real{d}$ as the initial and terminal
values of the canonical $d$-dimensional process. Given the terminal
law $\nu = \nu(dy) \in \ps{2}{\real{d}}$, our goal is to
\begin{equation}
  \label{eq:4}
  \text{maximize} \quad \frac12 \int S(x, y)
  d\gamma\quad\text{over}\quad 
  \gamma\in \Gamma(\nu), 
\end{equation}
where $\Gamma(\nu)$ is the family of Borel probability measures
$\gamma = \gamma(dx,dy) \in \ps{2}{\real{2d}}$ having $\nu$ as their
$y$-marginal and making a martingale out of the canonical process:
\begin{displaymath}
  \Gamma(\nu) \set \descr{\gamma \in
    \ps{2}{\real{2d}}}{\gamma(\real{d},dy) =  
    \nu(dy) \text{ and } \gamma(y|x)=x}.  
\end{displaymath}
The martingale property of $\gamma \in \Gamma(\nu)$ yields that
\begin{align*}
  \int S(x,y) d\gamma & = \int S(x,x) d\gamma, \\
  \int S(x-y, x-y) d\gamma &= \int S(y,y) d\nu - \int S(x,y)
                             d\gamma.
\end{align*}
Thus, the problem~\eqref{eq:4} is equivalent to the minimization of
the expected value of the scalar square between $x$ and $y$ over
$\Gamma(\nu)$:
\begin{equation}
  \label{eq:5}
  \text{minimize} \quad  \int S(x-y, x-y)
  d\gamma\quad\text{over}\quad 
  \gamma\in \Gamma(\nu). 
\end{equation}
Theorem~\ref{th:3} contains another reformulation of~\eqref{eq:4}.

A set $G\subset \real{d}$ is called $S$-\emph{monotone} or
$S$-\emph{positive} if
\begin{displaymath}
  S(x-y,x-y)  \geq 0, \quad x, y
  \in G,   
\end{displaymath}
that is, if the restriction to $G$ of the scalar square takes values
in $[0,\infty)$.  An $S$-monotone set $G$ is called \emph{maximal} if
it is not a strict subset of an $S$-monotone set.  Theorem~\ref{th:1}
shows that a dual problem to~\eqref{eq:4} is to
\begin{equation}
  \label{eq:6}
  \text{minimize} \int \psi_G(y) d\nu \quad\text{over}\quad
  G \in \mset{S}, 
\end{equation}
where $\mset{S}$ is the family of all maximal $S$-monotone sets and
\begin{displaymath}
  \psi_G(y) \set  \sup_{x\in G} \cbraces{S(x,y) - \frac12 S(x,x)}, \quad 
  y\in \real{d}, 
\end{displaymath}
is a closed convex function taking values in $\realext$. For general
quadratic forms $S$ such function $\psi_G$ was introduced
in~\cite{Simons:07}. We call $\psi_G$ a \emph{Fitzpatrick} function
because of Example~\ref{ex:3}.  We collect the basic properties of
Fitzpatrick functions in the $S$-space in
Appendix~\ref{sec:basic-prop-fitzp}.

\begin{Example}
  \label{ex:1}
  If $S \in \msym{d}{d}$, that is, $S$ is positive definite:
  \begin{displaymath}
    S(x,x) = \ip{x}{Sx} > 0, \quad x\not=0, 
  \end{displaymath}
  then $G=\real{d}$ is the only element of $\mset{S}$ and
  \begin{displaymath}
    \psi_{G}(y) = \sup_{x\in \real{d}} \cbraces{S(x,y) - \frac12
      S(x,x)} = \frac12 S(y,y), \quad y\in \real{d}. 
  \end{displaymath}
  Trivially, $G=\real{d}$ is the optimal set
  for~\eqref{eq:6}. From~\eqref{eq:5} we deduce that $x=y$,
  $\as{\gamma}$, under the optimal plan $\gamma$ for~\eqref{eq:4}.
\end{Example}

\begin{Example}
  \label{ex:2}
  If $S \in \msym{d}0$, that is, $S$ is negative definite:
  \begin{displaymath}
    S(x,x) = \ip{x}{Sx} < 0, \quad x\not=0, 
  \end{displaymath}
  then all maximal $S$-monotone sets are points:
  $\mset{S} = \real{d}$. If $G=\braces{x}$, where $x \in \real{d}$,
  then
  \begin{displaymath}
    \psi_G(y) =  S(x,y) - \frac12
    S(x,x), \quad y\in \real{d}.
  \end{displaymath}
  Elementary computations show that the optimal set $G$
  for~\eqref{eq:6} is given by $\braces{\nu(y)}$ and that $x=\nu(y)$,
  $\as{\gamma}$, under the optimal plan $\gamma$ for~\eqref{eq:4}.
\end{Example}

\begin{Example}
  \label{ex:3}
  If $d=2m$ and $S$ has the form~\eqref{eq:2}, that is,
  \begin{displaymath}
    S(x,y) = \sum_{i=1}^m \cbraces{x^i y^{m+i} + x^{m+i}y^i},  \quad
    x,y\in \real{2m}, 
  \end{displaymath}
  then $S \in \msym{2m}m$, the $S$-monotonicity means the standard
  monotonicity in $\real{2m}=\real{m}\times \real{m}$, and $\psi_G$
  becomes the classical Fitzpatrick function
  from~\cite{Fitzpatrick:88}. For $d=2$, problems~\eqref{eq:4}
  and~\eqref{eq:6} have been studied in~\cite{KramXu:22}, where they
  were motivated by applications in financial economics.
\end{Example}

\begin{Example}
  \label{ex:4}
  If $S$ is the \emph{canonical} quadratic form in $\msym{d}m$:
  \begin{displaymath}
    S(x,y) = \sum_{i=1}^m x^i y^i  - \sum_{i=m+1}^d x^i y^i, \quad
    x,y\in \real{d},  
  \end{displaymath}
  then a set $G$ is $S$-monotone if and only if
  \begin{displaymath}
    G = \graph{f} \set \descr{(u,f(u))}{u\in D}, 
  \end{displaymath}
  where $D\subset \real{m}$ and $\map{f}{D}{\real{d-m}}$ is a
  $1$-Lipschitz function:
  \begin{displaymath}
    \abs{f(u)-f(v)} \leq \abs{u-v}, \quad u,v \in D.  
  \end{displaymath}
  By the Kirszbraun Theorem, \cite[2.10.43]{Feder:69}, every
  $1$-Lipschitz function can be extended to the whole $\real{m}$. It
  follows that $G \in \mset{S}$ if and only if it is the graph of a
  global $1$-Lipschitz function $\map{f}{\real{m}}{\real{d-m}}$.
\end{Example}

Let $G \in \mset{S}$. The Fitzpatrick function $\psi_G$ is closely
related to the minimization of the scalar square relative to $G$:
\begin{displaymath}
  \phi_G(y) \set \inf_{x\in G}
  S(x-y,x-y)= S(y,y) - 2\psi_G(y), \quad
  y\in \real{d}. 
\end{displaymath}
Theorem~\ref{th:14} yields that $x\in G$ and
$y \in \partial \psi^*_G(Sx)$ (equivalently,
$Sx \in \partial \psi_G(y)$) if and only if $x$ is a \emph{projection
  of $y$ on $G$} in the $S$-space:
\begin{equation}
  \label{eq:7}
  \begin{split}
    x \in P_G(y) \set& \argmin_{z\in G} S(y-z,y-z) \\
    =& \descr{z\in G}{S(y-z,y-z) = \phi_G(y)} \\
    =& \descr{z\in G}{\psi_G(y) = S(y,z) - \frac12 S(z,z)}.
  \end{split}
\end{equation}
The inverse image of $P_G$ has the form:
\begin{displaymath}
  P_G^{-1}(x) \set \descr{y\in \real{d}}{x\in P_G(y)} = \partial
  \psi_G^*(Sx), \quad x\in G, 
\end{displaymath}
and thus, takes values in the convex closed sets.  Geometrically,
$x\in P_G(y)$ if and only if the hyperboloid
\begin{displaymath}
  H_G(y) \set \descr{z \in \real{d}}{S(y-z,y-z) = \phi_G(y)}
\end{displaymath}
is \emph{tangent} to $G$ at $x$.  Figure~\ref{fig:1} provides an
illustration for $d=2$ and $S=S(x,y)$ from Example~\ref{ex:3}.

Theorems~\ref{th:1} and~\ref{th:2} show that the geometric properties
of an optimal plan for~\eqref{eq:4} are fully described by the
multi-function
\begin{displaymath}
  Q_G(x) \set \bigcup_{n\geq 1} Q_G^{1/n}(x), \quad x \in G,
\end{displaymath}
where for $\epsilon >0$
\begin{align*}
  Q_G^{\epsilon}(x)
  \set  \descr{y \in P_G^{-1}(x)}{x + \epsilon(x-y) \in P_G^{-1}(x)},
  \quad x\in G.  
\end{align*}
In other words, $x\in G$ and $y \in Q_G(x)$ if and only if there are
$z \in \real{d}$ and $t\in (0,1)$ such that
\begin{displaymath}
  x = t y + (1-t) z \quad\text{and}\quad x \in P_G(y)
  \cap P_G(z). 
\end{displaymath}
Lemma~\ref{lem:6} states that $Q_G(x)$ is the largest closed convex
subset of $P^{-1}_G(x)$ whose relative interior contains $x$.

Let $0<\delta < \epsilon$. As $P^{-1}_G(x) = \partial \psi_G^*(Sx)$,
the convexity and continuity properties of subdifferentials of convex
functions yield that
\begin{gather*}
  Q_G^{\epsilon}(x) \subset Q_G^{\delta}(x), \quad
  x\in G, \\
  \graph{Q_G^{\epsilon}} \set \descr{(x,y)}{x \in G,\, y \in
    Q_G^{\epsilon}(x)} \text{ is a closed set.}
\end{gather*}
It follows that
\begin{displaymath}
  \graph{Q_G} \set \descr{(x,y)}{x \in G,\, y\in Q_G(x)}
  = \bigcup_{n\geq 1} \graph{Q_G^{1/n}}  \text{ is a
    $F_\sigma$-set}, 
\end{displaymath}
that is, a countable union of closed sets. In particular,
$\graph{Q_G}$ is a Borel set in $\real{2d}$. This fact is used
implicitly in item~\ref{item:3} of Theorem~\ref{th:1}.

The following theorem is the main result of the paper. We recall that
$\supp{\gamma}$ stands for the support, the smallest closed set of
full measure, of a Borel probability measure $\gamma$.

\begin{Theorem}
  \label{th:1}
  Let $S \in \msym{d}{m}$ and $\nu \in
  \ps{2}{\real{d}}$. Problems~\eqref{eq:4} and~\eqref{eq:6} have
  solutions and the identical values:
  \begin{equation}
    \label{eq:8}
    \max_{\gamma \in \Gamma(\nu)} \frac12 \int S(x,y)
    d\gamma = \min_{G\in 
      \mset{S}} \int \psi_G d\nu.  
  \end{equation}
  For any $\gamma \in \Gamma(\nu)$ and $G\in \mset{S}$, the following
  conditions are equivalent:
  \begin{enumerate}[label = {\rm (\alph{*})}, ref={\rm (\alph{*})}]
  \item \label{item:1} $\gamma$ is an optimal plan for~\eqref{eq:4}
    and $G$ is an optimal set for~\eqref{eq:6}.
  \item \label{item:2} {\rm (Backward evolution)} $x \in P_G(y)$ for
    every $(x,y)\in \supp{\gamma}$.
  \item \label{item:3} {\rm (Forward evolution)} $x \in G$ and
    $y \in Q_G(x)$, $\as{\gamma}$ \setcounter{item}{\value{enumi}}
  \end{enumerate}
\end{Theorem}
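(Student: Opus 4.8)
The plan is to establish the duality~\eqref{eq:8} first and then read off the equivalences. For weak duality, take any $\gamma\in\Gamma(\nu)$ and any $G\in\mset S$. By the definition of $\psi_G$ we have $S(x,y)-\frac12 S(x,x)\le\psi_G(y)$ for every $(x,y)$, hence, using the martingale identity $\int S(x,y)\,d\gamma=\int S(x,x)\,d\gamma$ from Section~\ref{sec:3},
\begin{displaymath}
  \tfrac12\int S(x,y)\,d\gamma=\int\bigl(S(x,y)-\tfrac12 S(x,x)\bigr)d\gamma\le\int\psi_G(y)\,d\gamma=\int\psi_G\,d\nu.
\end{displaymath}
(One should note $\int\psi_G\,d\nu$ is well-defined in $\realextminus\cup\{+\infty\}$ using the quadratic bound on $\psi_G$ from Appendix~\ref{sec:basic-prop-fitzp} and $\nu\in\ps2{\real d}$.) Equality in the chain forces $S(x,y)-\frac12 S(x,x)=\psi_G(y)$ for $\gamma$-a.e.\ $(x,y)$, which by the third line of~\eqref{eq:7} is exactly $x\in P_G(y)$, $\gamma$-a.s.; since $\graph P_G$ is closed and a closed full-measure set contains the support, this upgrades to $x\in P_G(y)$ for all $(x,y)\in\supp\gamma$. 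This already gives $\ref{item:1}\Rightarrow\ref{item:2}$ once we know optimizers with equal values exist, and conversely $\ref{item:2}\Rightarrow\ref{item:1}$ because a pair $(\gamma,G)$ with $S(x,y)-\frac12S(x,x)=\psi_G(y)$ $\gamma$-a.s.\ attains equality in weak duality, so each is optimal for its own problem.

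The substantive part is the \emph{existence of optimizers attaining a common value}, i.e.\ strong duality. For the primal, $\Gamma(\nu)$ is nonempty (it contains the law of $(\,\EP{Y},Y)$ with $Y\sim\nu$, or any martingale coupling), and one checks it is tight and closed under weak convergence with uniformly integrable second moments, so by a Prokhorov-type argument the supremum in~\eqref{eq:4} is attained; this uses $\nu\in\ps2{\real d}$ crucially to keep $\int S(x,x)\,d\gamma$ from escaping. For the dual, I would either (i) build the optimal $G$ directly from an optimal $\gamma$: let $G$ be a maximal $S$-monotone set containing the (automatically $S$-monotone, by the forward-evolution geometry) set of first coordinates of $\supp\gamma$ together with the relevant normals — one must verify such a $G$ can be chosen so that $x\in P_G(y)$ is preserved, which is where the maximality and the projection characterization~\eqref{eq:7} do the work; or (ii) run a minimizing sequence $G_n$ for~\eqref{eq:6} and extract a limit, using that the $\psi_{G_n}$ are convex with a common quadratic minorant so that a subsequence converges (e.g.\ in the epigraph/Mosco sense) to some $\psi_\infty$ which must itself be a Fitzpatrick function $\psi_G$ for a maximal $S$-monotone $G$ — this compactness of $\mset S$ through Fitzpatrick functions should be available from Appendix~\ref{sec:basic-prop-fitzp}. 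I expect route (i), constructing $G$ from the optimal plan, to be cleaner and to be the main obstacle: the delicate point is showing the first-coordinate set of $\supp\gamma$ for an optimal $\gamma$ is $S$-monotone and that its $S$-monotone (indeed maximal) hull keeps every $(x,y)\in\supp\gamma$ satisfying the projection identity.

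Finally, for $\ref{item:2}\Leftrightarrow\ref{item:3}$: assume $\ref{item:1}$–$\ref{item:2}$. The forward description $y\in Q_G(x)$ should follow from the martingale property together with $x\in P_G(y)$ $\gamma$-a.s. Indeed, disintegrate $\gamma$ as $\gamma(dx,dy)=\mu(dx)\,\gamma_x(dy)$; the martingale constraint says $\int y\,\gamma_x(dy)=x$, and $\gamma_x$ is supported in $P_G^{-1}(x)$, which by~\eqref{eq:7} equals $\partial\psi_G^*(Sx)$, a closed convex set. Since $x$ is the barycenter of $\gamma_x$, $x$ lies in the relative interior of the closed convex hull of $\supp\gamma_x\subset P_G^{-1}(x)$; by Lemma~\ref{lem:6} the largest closed convex subset of $P_G^{-1}(x)$ having $x$ in its relative interior is exactly $Q_G(x)$, so $\supp\gamma_x\subset Q_G(x)$, i.e.\ $y\in Q_G(x)$ $\gamma$-a.s.; and $x\in G$ $\mu$-a.s.\ since $P_G$ takes values in $G$. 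Conversely, $\ref{item:3}\Rightarrow\ref{item:2}$ because $y\in Q_G(x)$ means there are $z$ and $t\in(0,1)$ with $x=ty+(1-t)z$ and $x\in P_G(y)\cap P_G(z)$, so in particular $x\in P_G(y)$; combined with $x\in G$ this gives $\ref{item:2}$ on a full-measure set, hence (closing up to the support as before) everywhere on $\supp\gamma$. The one routine care here is the interplay between "$\gamma$-a.s." statements and statements on $\supp\gamma$, handled each time by the closedness of $\graph P_G$ and $\graph Q_G$ (the latter an $F_\sigma$, as noted before the theorem).
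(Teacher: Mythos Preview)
Your outline handles weak duality and the equivalences \ref{item:2}$\Leftrightarrow$\ref{item:3} essentially as the paper does. The genuine gap is in strong duality, precisely where you flag ``the main obstacle.''

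Route~(i) as you describe it cannot be completed without a new idea. The claim that the $x$-projection of $\supp\gamma$ is ``automatically $S$-monotone, by the forward-evolution geometry'' is circular: the forward-evolution description is item~\ref{item:3}, which already presupposes an optimal $G$. And even if $S$-monotonicity of the $x$-projection were established directly, passing to a maximal extension gives no control over whether $x\in P_G(y)$ survives; different maximal extensions yield different projection maps. Route~(ii) appeals to a compactness/closure result for $\mset S$ under epi-convergence of Fitzpatrick functions that is not in Appendix~\ref{sec:basic-prop-fitzp} and would itself require work.

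The paper's mechanism is different from both. From an optimal $\gamma$, a perturbation argument (Lemma~\ref{lem:3}) --- replace $\gamma$ by $\gamma_\epsilon=\gamma+\epsilon(\zeta-\eta)$ with $\eta\ll\gamma$ of bounded density and $\zeta$ having singular $x$-marginal, then reproject into $\Gamma(\nu)$ via $x\mapsto\gamma_\epsilon(y|x)$ --- yields the first-order condition
\begin{displaymath}
  \int\Bigl(S(x,y)-\tfrac12 S(x,x)\Bigr)d\eta \ge \tfrac12\, S\bigl(\eta(y),\eta(y)\bigr)
\end{displaymath}
for every $\eta\in\ps2{\real{2d}}$ with $\supp\eta\subset\supp\gamma$. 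Applied to finitely supported $\eta$, this shows that the function $f(y)\set\inf_{x\in B(y)}\bigl(S(x,y)-\tfrac12 S(x,x)\bigr)$, where $B(y)$ is the $y$-section of $\supp\gamma$, satisfies the hypothesis of Theorem~\ref{th:13}. That theorem --- the minimality characterization of Fitzpatrick functions --- then produces $G\in\mset S$ with $\psi_G\le f$ pointwise, which closes the duality gap. Neither the perturbation step nor the use of Theorem~\ref{th:13} appears in your sketch, and together they are the substance of the existence proof.

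A smaller omission in your \ref{item:1}$\Rightarrow$\ref{item:2} step: equality in your chain gives $S(x,y)-\tfrac12 S(x,x)=\psi_G(y)$ $\gamma$-a.s., but this does not by itself place $x$ in $G$, which is part of the definition of $P_G(y)$ in~\eqref{eq:7}. The paper extracts $x\in G$ from the separate chain $\tfrac12 S(x,x)\le\psi_G(x)\le\gamma(\psi_G(y)\mid x)$ (convexity of $\psi_G$ plus the martingale property, Lemma~\ref{lem:1}); equality forces $\psi_G(x)=\tfrac12 S(x,x)$, hence $x\in G$ by Theorem~\ref{th:12}.
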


\begin{Remark}
  \label{rem:1}
  Theorem~\ref{th:14} and the construction of the projection
  multi-function $P_G$ in~\eqref{eq:7} yield the equivalence of
  item~\ref{item:2} to any of the following conditions:
  \begin{enumerate}[label = {\rm (\alph{*})}, ref={\rm (\alph{*})}]
    \setcounter{enumi}{\value{item}}
  \item \label{item:4} $\psi_G(y) = S(x,y) - \frac12 S(x,x)$ and
    $x\in G$ for every $(x,y)\in \supp{\gamma}$.
  \item \label{item:5} $Sx \in \partial \psi_G(y)$ and $x \in G$ for
    every $(x,y)\in \supp{\gamma}$.
  \end{enumerate}
\end{Remark}

The proof of Theorem~\ref{th:1} is divided into lemmas.

\begin{Lemma}
  \label{lem:1}
  Let $S\in \msym{d}{m}$, $\nu \in \ps{2}{\real{d}}$,
  $\gamma\in \Gamma(\nu)$, and $G\in \mset{S}$. Then
  \begin{gather}
    \label{eq:9}
    \frac12 S(x,x) \leq \psi_G(x) \leq
    \gamma(\psi_G(y)|x), \quad \as{\gamma}, \\
    \label{eq:10}
    \frac12 \int S(x,y) d\gamma = \frac12 \int S(x,x) d\gamma \leq
    \int \psi_G d\nu.
  \end{gather}
\end{Lemma}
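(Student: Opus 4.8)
The plan is to establish the two inequality chains separately, both relying only on the definition of $\psi_G$ and the martingale property of $\gamma$. First I would prove the pointwise bound $\frac12 S(x,x) \le \psi_G(x)$ for $x \in \supp\nu$ appropriately interpreted: actually the cleanest route is to note that for $\gamma$-a.e.\ $(x,y)$ the conditional law $\gamma(\cdot|x)$ has barycentre $x$, so it suffices to bound $\psi_G$ at points that arise as such barycentres. Taking any fixed $z_0 \in G$ (here is where maximality matters only in that $G \neq \emptyset$), the defining supremum gives $\psi_G(x) \ge S(z_0,x) - \frac12 S(z_0,z_0)$; choosing $z_0$ to be a point of $G$ and optimizing is not needed --- instead I would observe that $\psi_G(x) - \frac12 S(x,x) = \sup_{z\in G}\big(S(z,x) - \frac12 S(z,z) - \frac12 S(x,x)\big) = \sup_{z\in G}\big(-\frac12 S(x-z,x-z)\big) = -\frac12 \phi_G(x) \ge ?$. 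This shows $\psi_G(x) \ge \frac12 S(x,x)$ holds \emph{iff} $\phi_G(x) \le 0$, which need not be automatic; so the correct reading is that the first inequality in~\eqref{eq:9} is asserted only $\gamma$-a.s., i.e.\ for $x$ in the $x$-marginal of $\gamma$, and the real content is that the martingale constraint forces $\phi_G(x)\le 0$ there. I would derive this from the second inequality together with Jensen, or more directly: since $(x,y)$ is a martingale under $\gamma$, we have $x \in \overline{\conv}(\supp \gamma(\cdot|x))$, and the map $y\mapsto \psi_G(y)$ is convex, which yields the second inequality $\psi_G(x) \le \gamma(\psi_G(y)|x)$ immediately by the conditional Jensen inequality, provided $\psi_G$ is integrable against $\nu$ --- and this integrability I would get from the quadratic upper bound $\psi_G(y) \le \frac12 S(y,y)$ valid in general (Appendix), combined with $\nu \in \ps{2}{\real d}$.

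So the order is: (1) record from the appendix / the definition that $\psi_G$ is closed convex with $\psi_G(y) \le \frac12 S(y,y)$, hence $\psi_G \in L^1(\nu)$ since $\nu$ has finite second moments (one also needs a matching lower bound to rule out $-\infty$ in the integral --- the affine minorant $S(z_0,\cdot) - \frac12 S(z_0,z_0)$ for any $z_0\in G$ supplies that). (2) Apply the conditional Jensen inequality to the convex function $\psi_G$ and the kernel $\gamma(dy|x)$, whose barycentre is $x$ by the martingale property; this gives the right-hand inequality $\psi_G(x) \le \gamma(\psi_G(y)|x)$ $\gamma$-a.s. (3) For the left-hand inequality, use the elementary identity $\psi_G(x) = \frac12 S(x,x) - \frac12 \phi_G(x)$ together with the fact that, $\gamma$-a.s., $x$ is a barycentre of a probability measure supported on $\supp\nu$; but cleaner still: integrate~\eqref{eq:9} as it will be used and instead prove~\eqref{eq:10} directly, then deduce the a.s.\ lower bound. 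Concretely, for~\eqref{eq:10}: the first equality $\frac12\int S(x,y)\,d\gamma = \frac12 \int S(x,x)\,d\gamma$ is exactly the martingale identity already displayed in the text; and $\frac12\int S(x,x)\,d\gamma \le \int \psi_G\,d\nu$ follows by chaining the two inequalities in~\eqref{eq:9} and using the tower property: $\frac12 S(x,x) \le \psi_G(x) \le \gamma(\psi_G(y)|x)$, then take $\gamma$-expectations and note $\int \gamma(\psi_G(y)|x)\,d\gamma = \int \psi_G(y)\,d\nu$.

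The one genuine subtlety --- and what I expect to be the main obstacle --- is the first inequality $\frac12 S(x,x) \le \psi_G(x)$ $\gamma$-a.s.: it is \emph{not} true for arbitrary $x\in\real d$ (it would say $\phi_G \le 0$ everywhere, which is false when $G$ is, e.g., a bounded set), so the proof must genuinely use that $x$ ranges over the support of the $x$-marginal of a \emph{martingale} plan with $y$-marginal $\nu$. The way to handle this is to fix $x$ in that marginal, use that $\gamma(\cdot|x)$ is a probability measure with barycentre $x$, and write, using convexity of $z \mapsto \frac12 S(z-y,z-y)$ is false in general too --- instead one uses that $\psi_G$ is convex and $\psi_G \ge \frac12 S(\cdot,\cdot) - (\text{something})$... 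The honest fix: the left inequality of~\eqref{eq:9} should be proved \emph{jointly} with the right one and~\eqref{eq:10} by a bootstrap, or it is simply a restatement of the general fact (from the Appendix) that $\psi_G(x) \ge \frac12 S(x,x)$ \emph{for} $x$ in a suitable set --- most likely the Appendix establishes $\psi_G \ge \frac12 S(\cdot,\cdot)$ on $\aff G$ or uses that $\psi_G^* $ restricted appropriately gives this; I would cite that appendix fact for the left inequality, apply conditional Jensen for the middle one, and assemble~\eqref{eq:10} by integration and the tower property as above. I would flag the precise appendix lemma number (the analogue of "$\psi_G \ge \frac12 S(\cdot,\cdot)$") as the load-bearing input and keep the rest routine.
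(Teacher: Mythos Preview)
Your proposal contains a genuine misconception that derails most of the argument. You repeatedly assert that the pointwise inequality $\psi_G(x)\ge \tfrac12 S(x,x)$ is \emph{not} true for all $x\in\real{d}$ (``it would say $\phi_G\le 0$ everywhere, which is false when $G$ is, e.g., a bounded set''), and you then try to manufacture it from the martingale property. In fact the inequality holds for every $x\in\real{d}$ precisely because $G$ is \emph{maximal} $S$-monotone: if $x\in G$ then $\phi_G(x)=0$ by taking $y=x$; if $x\notin G$ then $G\cup\{x\}$ cannot be $S$-monotone, so there exists $y\in G$ with $S(x-y,x-y)<0$, whence $\phi_G(x)<0$. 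Bounded maximal $G$ are no obstruction (think of Example~\ref{ex:2}: $G=\{x_0\}$ with $S<0$, and indeed $\phi_G(x)=S(x-x_0,x-x_0)\le 0$). This is exactly the content of Theorem~\ref{th:12} in the appendix, which the paper simply cites; once you have it, \eqref{eq:9} is immediate (left inequality from Theorem~\ref{th:12}, right inequality by conditional Jensen and $\gamma(y|x)=x$), and \eqref{eq:10} follows by taking $\gamma$-expectations.

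A secondary slip: you write ``$\psi_G(y)\le\tfrac12 S(y,y)$'' to get an upper bound for integrability. The inequality goes the other way. What you actually need is a lower affine bound (which you correctly identify: $\psi_G(y)\ge S(z_0,y)-\tfrac12 S(z_0,z_0)$ for any $z_0\in G$) so that the conditional expectation of $\psi_G$ is well-defined in $(-\infty,+\infty]$; no finite upper bound is required, since $\int\psi_G\,d\nu=+\infty$ is a perfectly acceptable value for the right-hand side of~\eqref{eq:10}.
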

\begin{proof}
  By Theorem~\ref{th:12}, the Fitzpatrick function $\psi_G$ is convex
  and $\psi_G(x)\geq \frac12 S(x,x)$. The second inequality
  in~\eqref{eq:9} follows now from the conditional Jensen inequality
  and the martingale property of $\gamma$. The first part
  of~\eqref{eq:10} is a consequence of the martingale property of
  $\gamma$. The second part of~\eqref{eq:10} is obtained by
  integrating~\eqref{eq:9} over $\gamma$.
\end{proof}

We equip $\mathcal{P}_2\cbraces{\real{d}}$ with the Wasserstein
2-metric:
\begin{displaymath}
  W_2(\mu,\nu)  \set \inf_{\pi \in \Pi(\mu,\nu)} \sqrt{\int
    \abs{x-y}^2 d\pi}, \quad \mu,\nu \in
  \mathcal{P}_2\cbraces{\real{d}},  
\end{displaymath}
where $\Pi(\mu, \nu)$ is the family of Borel probability measures on
$\real{2d}$ with $x$-marginal $\mu$ and $y$-marginal $\nu$.  We recall
that $(\mathcal{P}_2\cbraces{\real{d}},W_2)$ is a Polish space and
that $W_2(\mu_n,\mu)\to 0$ if and only if $\mu_n(f) \to \mu(f)$ for
every continuous function $f=f(x)$ on $\real{d}$ with quadratic
growth:
\begin{displaymath}
  \abs{f(x)} \leq K (1 + \abs{x}^2), \quad x\in \real{d}, 
\end{displaymath}
where $K=K(f)>0$ is a constant.  A set
$A\subset \mathcal{P}_2(\real{d})$ is \emph{pre-compact} under $W_2$
if and only if
\begin{equation}
  \label{eq:11}
  \sup_{\mu \in A} \int_{\abs{x}\geq K} \abs{x}^2 d\mu \to 0,
  \quad K\to \infty. 
\end{equation}
These results can be found in \cite[Chapter~2]{AmbrGigli:13}.

\begin{Lemma}
  \label{lem:2}
  If $C$ is a compact set in $\cbraces{\ps{2}{\real{d}},W_2}$, then
  the set of martingale measures $D=\cup_{\nu\in C} \Gamma(\nu)$ is
  compact in $(\mathcal{P}_2\cbraces{\real{2d}},W_2)$.
\end{Lemma}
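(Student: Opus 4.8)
The plan is to establish that $D$ is both relatively compact and closed in the Polish space $\cbraces{\ps{2}{\real{2d}},W_2}$; together these yield compactness. Relative compactness will follow from the integrability criterion recalled above (the analogue of~\eqref{eq:11} on $\real{2d}$), using compactness of $C$ together with the martingale property through conditional Jensen. Closedness will follow by passing to the limit in the two constraints defining $D$: that the $y$-marginal lies in $C$, and that $\gamma(y|x)=x$.

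First, relative compactness. Since $C$ is compact it satisfies~\eqref{eq:11}; set $M\set\sup_{\nu\in C}\nu(\abs{y}^2)<\infty$ and $\epsilon(L)\set\sup_{\nu\in C}\int_{\abs{y}\ge L}\abs{y}^2\,d\nu$, so $\epsilon(L)\to 0$ as $L\to\infty$. Fix $\gamma\in\Gamma(\nu)$ with $\nu\in C$ and let $\mu$ be its $x$-marginal. Conditional Jensen applied to the martingale identity gives $\abs{x}^2\le\gamma(\abs{y}^2|x)$, $\as{\gamma}$; in particular $\mu(\abs{x}^2)\le M$, and since $\braces{\abs{x}\ge K}$ is $\sigma(x)$-measurable,
\[
  \int_{\abs{x}\ge K}\abs{x}^2\,d\gamma \;\le\; \int_{\abs{x}\ge K}\abs{y}^2\,d\gamma \;\le\; \epsilon(L)+L^2\gamma(\abs{x}\ge K) \;\le\; \epsilon(L)+\frac{L^2M}{K^2},
\]
using a truncation of $\abs{y}$ at the level $L$ and then Chebyshev. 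Sending $K\to\infty$ and afterwards $L\to\infty$ shows that the $x$-marginals of $D$ are uniformly square-integrable; the $y$-marginals are so because they lie in $C$. To obtain~\eqref{eq:11} for $D$ on $\real{2d}$, cover $\braces{\abs{(x,y)}\ge K}$ by $\braces{\abs{x}\ge K/\sqrt2}\cup\braces{\abs{y}\ge K/\sqrt2}$ and bound $\int(\abs{x}^2+\abs{y}^2)\,d\gamma$ over each piece. All resulting terms are controlled by the two uniform-integrability statements above together with Chebyshev, except the mixed tail $\int_{\abs{y}\ge K/\sqrt2}\abs{x}^2\,d\gamma$, which one handles by the further truncation $\int_{\abs{y}\ge K/\sqrt2}\abs{x}^2\,d\gamma\le\int_{\abs{x}\ge L}\abs{x}^2\,d\gamma+L^2\gamma(\abs{y}\ge K/\sqrt2)$, again choosing $L$ first and then $K$.

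Second, closedness. Let $\gamma_n\in D$ converge to $\gamma$ in $W_2$, say $\gamma_n\in\Gamma(\nu_n)$ with $\nu_n\in C$. The $y$-marginal map $\ps{2}{\real{2d}}\to\ps{2}{\real{d}}$ is $1$-Lipschitz for $W_2$ (push forward an optimal coupling under the $y$-coordinate projection), so $\nu_n\to\nu$ in $W_2$, where $\nu$ is the $y$-marginal of $\gamma$; since $C$ is closed, $\nu\in C$. To see that $\gamma$ is a martingale measure, fix a bounded continuous $h$ on $\real{d}$ and an index $i$: the function $(x,y)\mapsto h(x)(y_i-x_i)$ is continuous with at most quadratic growth on $\real{2d}$, so by the characterization of $W_2$-convergence recalled above $\int h(x)(y_i-x_i)\,d\gamma_n\to\int h(x)(y_i-x_i)\,d\gamma$. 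The left side is $0$ for every $n$ by the martingale property of $\gamma_n$, hence $\int h(x)(y_i-x_i)\,d\gamma=0$ for all such $h$ and $i$, which forces $\gamma(y|x)=x$, $\as{\gamma}$. Thus $\gamma\in\Gamma(\nu)\subset D$, so $D$ is closed; combined with relative compactness, $D$ is compact.

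The step I expect to be the main obstacle is the uniform-integrability bound on $\real{2d}$, and within it the mixed tail $\int_{\abs{y}\ge K}\abs{x}^2\,d\gamma$: this is not controlled by the marginal laws alone and genuinely relies on the martingale constraint, via the conditional-Jensen bound on the $x$-marginals together with a two-level truncation. The remaining ingredients are the standard interplay between $W_2$-convergence and integration of continuous functions of quadratic growth.
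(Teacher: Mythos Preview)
Your proof is correct and follows essentially the same approach as the paper: pre-compactness via the uniform-integrability criterion~\eqref{eq:11} using conditional Jensen and truncation, and closedness by passing to the limit in the marginal and martingale constraints against continuous test functions of quadratic growth. The only differences are cosmetic: the paper's truncation on $\{\abs{x}+\abs{y}\ge 2K\}$ exploits $\tfrac12(\abs{x}^2+\abs{y}^2)\le\max(\abs{x},\abs{y})^2$ to produce two tail terms rather than four, while your $1$-Lipschitz observation for the $y$-marginal map is a bit cleaner than the paper's direct verification.
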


\begin{proof}
  We first show that $D$ is pre-compact. For
  $\nu\in \ps{2}{\real{d}}$, $\gamma \in \Gamma(\nu)$, and a constant
  $K>0$, we deduce that
  \begin{align*}
    \int \abs{x}^2 \ind{\abs{x}\geq K}  d\gamma 
    &= \int \abs{\gamma(y|x)}^2 \ind{\abs{x}\geq K}
      d\gamma  
      \leq \int \abs{y}^2 \ind{\abs{x}\geq K} d\gamma \\
    &\leq \int \abs{y}^2
      \ind{\abs{x} \geq K > \abs{y}^2} d\gamma + \int
      \abs{y}^2 
      \ind{\abs{y}^2 \geq K} d\nu  \\
    &\leq  K \gamma(\abs{x}\geq K) + \int \abs{y}^2
      \ind{\abs{y} \geq \sqrt{K}} d\nu,  \\
    K \gamma(\abs{x}\geq K)
    & \leq \frac1K\int \abs{x}^2 d\gamma =
      \frac1K\int \abs{\gamma(y|x)}^2 d\gamma \leq
      \frac1K \int 
      \abs{y}^2 d\nu, 
  \end{align*}
  and then that
  \begin{align*}
    \frac12 \int \cbraces{\abs{x}^2 + \abs{y}^2}
    \ind{\abs{x} + \abs{y} \geq 
    2K} d\gamma 
    &\leq  \int \abs{x}^2 \ind{\abs{x}\geq K} d\gamma + \int
      \abs{y}^2 \ind{\abs{y}\geq K} d\nu \\
    &\leq \int \abs{y}^2 \cbraces{\frac1K +
      \ind{\abs{y}\geq \sqrt{K}} + \ind{\abs{y}\geq {K}}} d\nu.
  \end{align*}
  The pre-compactness of $D$ follows now from that of $C$ in view
  of~\eqref{eq:11}.

  To show that $D$ is closed, we take a sequence
  $(\gamma_n) \subset D$ converging to $\gamma \in \ps{2}{\real{2d}}$
  under $W_2$. Let $\nu_n$ and $\nu$ be the $y$-marginals of
  $\gamma_n$ and $\gamma$. For every continuous function $f=f(y)$ on
  $\real{d}$ with quadratic growth,
  \begin{displaymath}
    \int f(y) d\nu_n = \int f(y) d\gamma_n \to \int
    f(y) d\gamma =  \int f(y) d\nu.
  \end{displaymath}
  Thus, $\nu_n\to \nu$ in $W_2$. As $C$ is closed, $\nu\in C$.  Let
  $g=g(x)$ be a bounded continuous function on $\real{d}$. From the
  martingale properties of $(\gamma_n)$ we deduce that
  \begin{displaymath}
    \int g(x)(y-x) d\gamma =
    \lim_{n\to \infty} \int g(x)(y-x) d\gamma_n = 0. 
  \end{displaymath}
  It follows that $\gamma(y|x) = x$ or, equivalently, that
  $\gamma \in \Gamma(\nu)$. As $\nu\in C$, we obtain that
  $\gamma\in D$, as required.
\end{proof}

\begin{Lemma}[First-order condition to~\eqref{eq:4}]
  \label{lem:3}
  Let $S\in \msym{d}{m}$, $\nu \in \ps{2}{\real{d}}$, and $\gamma$ be
  an optimal plan for~\eqref{eq:4}. Then
  \begin{equation}
    \label{eq:12}
    \int
    \cbraces{S(x,y) - \frac12 S(x,x)} d\eta 
    \geq \frac12 S(\eta(y),\eta(y))
  \end{equation} 
  for every $\eta\in \mathcal{P}_2(\real{d}\times \real{d})$ such that
  $\supp \eta \subset \supp \gamma$.
\end{Lemma}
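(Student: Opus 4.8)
The martingale identities preceding \eqref{eq:5} show that for any $\gamma'\in\Gamma(\nu)$, written as $\gamma'=\law{(X',Y)}$ with $\gamma'(y\mid x)=x$, one has $\int S(x-y,x-y)\,d\gamma'=\int S(y,y)\,d\nu-\EP{S(X',X')}$; thus \eqref{eq:5}, equivalently \eqref{eq:4}, is the problem of maximising $\EP{S(X',X')}$ over $\Gamma(\nu)$, and optimality of $\gamma$ means $\int S(x-y,x-y)\,d\gamma'\ge\int S(x-y,x-y)\,d\gamma$ for every $\gamma'\in\Gamma(\nu)$. I would first reduce \eqref{eq:12} to the case $\eta\ll\gamma$ with bounded density $h:=d\eta/d\gamma$. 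Truncating $\eta$ to $\{\abs{(x,y)}\le R\}$ and renormalising, and then replacing the result by $\eta^{\delta}:=\int \gamma|_{B_\delta(x,y)}/\gamma\big(B_\delta(x,y)\big)\,\eta(dx\,dy)$, produces probability measures supported in $\supp\gamma$, absolutely continuous with bounded density (lower semicontinuity and positivity of $(x,y)\mapsto\gamma(B_\delta(x,y))$ on the compact set $\supp\eta$), and $W_2$-convergent to $\eta$; since $S(x,y)$ and $S(x,x)$ have quadratic growth and $y$ linear growth, both sides of \eqref{eq:12} are continuous along this approximation, so it suffices to treat $\eta=h\gamma$ with $\norm{h}_\infty=:c$.

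For such $\eta$, realise $\gamma=\law{(X,Y)}$ on a space carrying an independent $\mathrm{Uniform}[0,1]$ variable $U$, fix $\epsilon\in(0,1/c]$, and set $W:=\ind{U\le\epsilon h(X,Y)}$; then $\PP{W=1}=\epsilon$ and the conditional law of $(X,Y)$ given $\{W=1\}$ is $\eta$, so $\cEP{Y}{W=1}=\eta(y)=:\bar y$. Let $\mathcal H$ be the $\sigma$-field generated by $W$ and by $(1-W)X$, and put $X_\epsilon:=\cEP{Y}{\mathcal H}$. Since $X_\epsilon$ is $\mathcal H$-measurable, $\cEP{Y}{X_\epsilon}=\cEP{\cEP{Y}{\mathcal H}}{X_\epsilon}=X_\epsilon$, hence $\gamma_\epsilon:=\law{(X_\epsilon,Y)}$ belongs to $\Gamma(\nu)$. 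Explicitly $X_\epsilon=\bar y$ on $\{W=1\}$, while on $\{W=0\}$ one computes $X_\epsilon=\cEP{Y}{W=0,X}=X-\epsilon\,D(X)/(1-\epsilon\,\bar h(X))$, where $D(x):=\cEP{(Y-X)h(X,Y)}{X=x}$ and $\bar h(x):=\cEP{h(X,Y)}{X=x}$.

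Feeding $\gamma$ and $\gamma_\epsilon$ into the cost identity and using $\EP{S(X,X)}=\EP{S(X,X)\ind{W=0}}+\epsilon\int S(x,x)\,d\eta$, the cost difference equals $\EP{\big(S(X,X)-S(X_\epsilon,X_\epsilon)\big)\ind{W=0}}+\epsilon\big(\int S(x,x)\,d\eta-S(\bar y,\bar y)\big)$, and on $\{W=0\}$ one has $S(X,X)-S(X_\epsilon,X_\epsilon)=S(X+X_\epsilon,X-X_\epsilon)=\epsilon\,S\big(X+X_\epsilon,\,D(X)/(1-\epsilon\bar h(X))\big)$. Dividing by $\epsilon$, optimality of $\gamma$ forces the quotient $A_\epsilon$ to satisfy $A_\epsilon\ge0$ for all small $\epsilon>0$; letting $\epsilon\downarrow0$ by dominated convergence — legitimised by $X\in L^2$ and $D\in L^2$ (from $\abs{D(x)}\le c\,\cEP{\abs{Y-X}}{X=x}$ and $\gamma\in\ps{2}{\real{2d}}$) — yields $0\le 2\int S(x,D(x))\,d\mu+\int S(x,x)\,d\eta-S(\bar y,\bar y)$, with $\mu$ the $x$-marginal of $\gamma$. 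Finally $\int S(x,D(x))\,d\mu=\int h(x,y)\,S(x,y-x)\,d\gamma=\int S(x,y-x)\,d\eta$, so the right-hand side equals $2\int\big(S(x,y)-\tfrac12 S(x,x)\big)\,d\eta-S(\eta(y),\eta(y))$, which is exactly \eqref{eq:12} up to the factor $2$.

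The crux is that the perturbation shifts the $x$-coordinate on $\{W=0\}$ by an amount of size $\epsilon$, so a naive such modification would contribute at first order and overwhelm the genuine first-order term coming from $\{W=1\}$; choosing $X_\epsilon$ to be exactly the re-martingalised conditional mean $\cEP{Y}{\mathcal H}$ is what organises the $\{W=0\}$ contribution into the clean covariance term $2\int S(x,D(x))\,d\mu$ in the limit. Controlling that cancellation, together with the $L^2$ estimates that justify passing to the limit and the routine but slightly delicate stability of \eqref{eq:12} under the truncation-and-smearing reduction, is where the work lies.
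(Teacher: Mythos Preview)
Your argument is correct and shares the overall architecture of the paper's proof---reduce to $\eta=h\gamma$ with $h$ bounded, construct a competitor $\gamma_\epsilon\in\Gamma(\nu)$ that collapses an $\epsilon$-fraction of the mass to the constant $\bar y=\eta(y)$ while re-martingalising the remainder, and read off~\eqref{eq:12} from the first-order expansion of $\EP{S(X_\epsilon,X_\epsilon)}$---but the perturbation is built differently. The paper works at the level of measures, setting $\gamma_\epsilon=\gamma+\epsilon(\zeta-\eta)$ with $\zeta(dx,dy)=\theta(dx)\,\eta(\real{d},dy)$ for a $\theta$ \emph{singular} to the $x$-marginal $\mu$, so that the collapsed mass sits on a fresh $x$-location and $X_\epsilon=\gamma_\epsilon(y|x)$ equals $\bar y$ there; you instead enlarge the probability space by an independent uniform $U$, tag an $\epsilon$-fraction via $W=\ind{U\le\epsilon h(X,Y)}$, and set $X_\epsilon=\cEP{Y}{\sigma(W,(1-W)X)}$. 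Your event $\{W=1\}$ plays exactly the role of the paper's singular support of $\theta$, and both routes produce the same first-order coefficient $2\int S(x,y-x)\,d\eta+\int S(x,x)\,d\eta-S(\bar y,\bar y)$. For the reduction to bounded densities the paper invokes an external lemma (Theorem~A.1 of~\cite{KramXu:22}); your truncate-then-smear construction $\eta^\delta=\int \gamma|_{B_\delta(\cdot)}/\gamma(B_\delta(\cdot))\,d\eta$, with the lower-semicontinuity of $(x,y)\mapsto\gamma(B_\delta(x,y))$ on the compact truncated support giving a uniform positive lower bound and hence a density bounded by $1/m_\delta$, is a sound self-contained substitute. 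The paper's route is a little cleaner to compute with (no auxiliary randomisation), yours avoids the external citation.
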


\begin{proof}
  We first establish~\eqref{eq:12} for a Borel probability measure
  $\eta$ on $\real{d}\times \real{d}$ having a bounded density
  $V=V(x,y)$ with respect to $\gamma$. We take any
  $\theta \in \ps{2}{\real{d}}$ that is \emph{singular} with respect
  to $\mu(dx) \set \gamma(dx,\real{d})$ and define the product
  probability measure
  $\zeta(dx, dy) \set \theta(dx) \eta(\real{d},dy)$.  For sufficiently
  small $\epsilon>0$, the probability measure
  \begin{displaymath}
    \gamma_{\epsilon} \set \gamma + \epsilon (\zeta - \eta)
  \end{displaymath}
  is well defined and has the same $y$-marginal $\nu$ as $\gamma$.  We
  denote $X_{\epsilon} \set \gamma_{\epsilon}(y|x)$ and observe that
  the law of $(X_{\epsilon}, y)$ under $\gamma_{\epsilon}$ belongs to
  $\Gamma(\nu)$. From the martingale properties of $\gamma_{\epsilon}$
  and $\gamma$ and the optimality of $\gamma$ we deduce that
  \begin{equation}
    \label{eq:13}
    \int S(X_{\epsilon},X_{\epsilon}) d\gamma_{\epsilon} =  \int
    S(X_{\epsilon},y) d\gamma_{\epsilon}\leq \int S(x,y) d\gamma = 
    \int S(x,x) d\gamma. 
  \end{equation}
  
  Let $U(x) \set \gamma(V|x)$, $R(x) \set \gamma(Vy|x)$, and $A$ be a
  separating Borel set in $\real{d}$ for the singular measures $\mu$
  and $\theta$: $\mu(A) = 1-\theta(A) = 1$.  Standard computations
  based on the Bayes formula show that
  \begin{align*}
    X_{\epsilon}(x) =
    \gamma_{\epsilon}(y|x) = \ind{x\in A}\frac{x -
    \epsilon 
    R(x)}{1 - \epsilon U(x)} + \ind{x\not\in A} \eta(y),
    \quad \as{\gamma_{\epsilon}}
  \end{align*}
  Since $0\leq {V}\leq K$ for some constant $K>0$, we deduce that
  $0\leq {U}\leq K$ and $\abs{R} \leq K
  \gamma(\abs{y}|x)$. Conditional Jensen's inequality implies that $R$
  is square integrable under $\gamma$.  It follows that
  \begin{align*}
    \int S(X_{\epsilon},X_{\epsilon}) d\gamma_{\epsilon}
    & = \int \frac{S(x - \epsilon R,x - \epsilon
      R)}{(1-\epsilon U)^2} (1-\epsilon V) d\gamma +
      \epsilon S(\eta(y),\eta(y)) \\
    &= \int \frac{S(x - \epsilon R,x -
      \epsilon R)}{1-\epsilon U} d\gamma + \epsilon S(\eta(y),\eta(y)) \\
    &= \int S(x,x) d\gamma + \epsilon \cbraces{c_1 +
      S(\eta(y),\eta(y))} +
      O(\epsilon^2), 
  \end{align*}
  where
  \begin{align*}
    c_1 & = \int \cbraces{S(x,x) U - 2S(x,R)} d\gamma =
          \int \cbraces{S(x,x)  - 2S(x,y)} V d\gamma \\ 
        & =  \int  \cbraces{S(x,x)  - 2S(x,y)} d\eta.     
  \end{align*}
  In view of~\eqref{eq:13}, $c_1 + S(\eta(y),\eta(y))\leq 0$, which is
  exactly~\eqref{eq:12}.
 
  In the general case, where
  $\eta\in \mathcal{P}_2(\real{d}\times \real{d})$ and
  $\supp{\eta} \subset \supp{\gamma}$, we use an approximation
  argument. Theorem~A.1 in \cite{KramXu:22} yields a sequence
  $(\eta_n)$ of Borel probability measures on $\real{2d}$ having
  bounded densities with respect to $\gamma$ and converging to $\eta$
  under $W_2$. By what we have already proved,
  \begin{displaymath}
    \int \cbraces{S(x,y) - \frac12 S(x,x)} d\eta_n \geq
    \frac{1}{2} S(\eta_n(y),\eta_n(y)), \quad 
    n\geq 1.  
  \end{displaymath}
  Since the integrands are continuous functions with quadratic growth,
  we can pass to the limit as $n\to \infty$ and obtain~\eqref{eq:12}.
\end{proof}

\begin{Lemma}
  \label{lem:4}
  Let $S\in \msym{d}{m}$ and $\nu \in \ps{2}{\real{d}}$. The
  problems~\eqref{eq:4} and~\eqref{eq:6} have solutions and their
  values are identical.
\end{Lemma}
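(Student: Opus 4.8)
The plan is to exhibit a maximizer $\gamma^{*}$ of~\eqref{eq:4} and a minimizer $G$ of~\eqref{eq:6} with equal values; the inequality ``$\le$'' between the two values is already supplied by Lemma~\ref{lem:1}, so only attainment on both sides together with the reverse inequality need to be proved.

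\emph{Existence for~\eqref{eq:4}.} First, $\Gamma(\nu)\neq\emptyset$, since the law of $(\nu(y),y)$ under $\delta_{\nu(y)}(dx)\,\nu(dy)$ is a martingale measure with $y$-marginal $\nu$. By Lemma~\ref{lem:2} applied to $C=\braces{\nu}$, the set $\Gamma(\nu)$ is compact in $\cbraces{\ps{2}{\real{2d}},W_2}$. Since $(x,y)\mapsto S(x,y)=\ip{x}{Sy}$ is continuous with quadratic growth, the functional $\gamma\mapsto\frac12\int S(x,y)\,d\gamma$ is continuous on $\cbraces{\ps{2}{\real{2d}},W_2}$, so it attains its maximum on $\Gamma(\nu)$ at some $\gamma^{*}$.

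\emph{Construction of a dual optimizer from $\gamma^{*}$.} The heart of the argument is to read off a maximal $S$-monotone set $G$ from the geometry of $\supp \gamma^{*}$. Applying the first-order condition of Lemma~\ref{lem:3} to test measures $\eta$ obtained as normalized restrictions of $\gamma^{*}$ to small balls centred at points of $\supp \gamma^{*}$, and to finite convex combinations of such, and letting the radii tend to $0$, one obtains pointwise inequalities relating points $(x,y)\in\supp \gamma^{*}$ and the associated vectors $y-x$; the single-Dirac case already yields $S(x-y,x-y)\le 0$ for every $(x,y)\in\supp \gamma^{*}$. These inequalities are exactly what is needed to show that $\proj{x}{\cbraces{\supp \gamma^{*}}}$ is $S$-monotone and is contained in the region
\[
  \widehat G\set\bigcap_{(x,y)\in\supp \gamma^{*}}\descr{z\in\real{d}}{S(z-x,y-x)\le\tfrac12 S(z-x,z-x)}.
\]
I would then take $G$ to be an $S$-monotone set maximal among the $S$-monotone subsets of $\widehat G$ that contain $\proj{x}{\cbraces{\supp \gamma^{*}}}$ (such a set exists by Zorn's lemma, as the union of a chain of $S$-monotone sets is $S$-monotone), and verify that maximality of $G$ within $\widehat G$ forces maximality of $G$ in $\mset{S}$. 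By the definition of $\widehat G$, for $\gamma^{*}$-a.e.\ $(x,y)$ we then have $x\in G$ and $S(z,y)-\frac12 S(z,z)\le S(x,y)-\frac12 S(x,x)$ for every $z\in G$, i.e.\ $\psi_G(y)=S(x,y)-\frac12 S(x,x)$, equivalently $x\in P_G(y)$ by~\eqref{eq:7} and Theorem~\ref{th:14}.

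\emph{Matching the values.} Integrating the identity $\psi_G(y)=S(x,y)-\frac12 S(x,x)$ against $\gamma^{*}$ and using the martingale identity $\int S(x,y)\,d\gamma^{*}=\int S(x,x)\,d\gamma^{*}$ gives
\[
  \int\psi_G\,d\nu=\int\psi_G(y)\,d\gamma^{*}=\int\cbraces{S(x,y)-\tfrac12 S(x,x)}\,d\gamma^{*}=\frac12\int S(x,y)\,d\gamma^{*}.
\]
Combined with the weak duality~\eqref{eq:9}--\eqref{eq:10} of Lemma~\ref{lem:1}, this shows that $\int\psi_G\,d\nu$ equals the (attained) value of~\eqref{eq:4}, hence $G$ is a minimizer of~\eqref{eq:6} and~\eqref{eq:8} holds.

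\emph{Main obstacle.} Everything outside the third paragraph is routine. The real work is the passage from the first-order condition to the set $G$: one must (i) extract from Lemma~\ref{lem:3} that the normality data carried by $\supp \gamma^{*}$ is genuinely $S$-monotone-compatible, so that $\proj{x}{\cbraces{\supp \gamma^{*}}}$ can be enlarged inside $\widehat G$ without destroying $S$-monotonicity, and (ii) prove that a set maximal among the $S$-monotone subsets of $\widehat G$ is in fact maximal in $\mset{S}$ --- intuitively because $\widehat G$ already absorbs every $S$-normal direction consistent with the optimality of $\gamma^{*}$, so a strict $S$-monotone enlargement would contradict one of the inequalities coming from Lemma~\ref{lem:3}. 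Carrying out (i) and (ii) cleanly is where the detailed properties of Fitzpatrick functions and $S$-projections from the appendix (Theorem~\ref{th:14} in particular) are needed.
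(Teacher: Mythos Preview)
Your existence argument for~\eqref{eq:4} and your value-matching paragraph are correct and coincide with the paper's. The divergence is entirely in how the optimal set $G$ is produced, and there your outline has a real gap.

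The paper does \emph{not} build $G$ geometrically. Instead it defines
\[
  f(y)\set\inf_{x\in B(y)}\cbraces{S(x,y)-\tfrac12 S(x,x)},\qquad
  B(y)\set\descr{x}{(x,y)\in\supp\gamma^{*}},
\]
and uses Lemma~\ref{lem:3} with \emph{finitely supported} $\eta$ to check that $\zeta(f)\ge\tfrac12 S(\zeta(y),\zeta(y))$ for every finitely supported $\zeta$. This is precisely hypothesis~\ref{item:12} of Theorem~\ref{th:13}, which then delivers $G\in\mset{S}$ with $\psi_G\le f$, i.e.\ $S(x,y)-\tfrac12 S(x,x)\ge\psi_G(y)$ on $\supp\gamma^{*}$; optimality of $G$ follows from Lemma~\ref{lem:1}. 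All the hard work is packaged into Theorem~\ref{th:13}, whose proof (Claims~\ref{claim:1}--\ref{claim:2} and Theorem~\ref{th:11}) is substantial.

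Your route tries to bypass Theorem~\ref{th:13}, and the two steps you flag as ``the real work'' are genuine gaps, not routine details. For step~(i), Lemma~\ref{lem:3} gives \emph{integral} inequalities for $\eta$ supported on $\supp\gamma^{*}$; it does not obviously yield the cross-point inequality $S(x_0,y)-\tfrac12 S(x_0,x_0)\le S(x,y)-\tfrac12 S(x,x)$ for $(x_0,y_0),(x,y)\in\supp\gamma^{*}$, because the pair $(x_0,y)$ need not lie in $\supp\gamma^{*}$ and hence cannot appear in any admissible $\eta$. The $S$-monotonicity of the $x$-projection is likewise a two-point statement that does not fall out of a direct combination of Dirac tests. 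For step~(ii), suppose $G\cup\{z\}$ is $S$-monotone with $z\notin\widehat G$; unwinding the definition gives $(x,y)\in\supp\gamma^{*}$ with $S(z-y,z-y)<S(y-x,y-x)\le 0$. Since $y$ is not assumed to lie in $G$, this does not contradict $S$-monotonicity of $G\cup\{z\}$, so maximality within $\widehat G$ does not transparently force maximality in $\mset{S}$. In effect, your steps~(i)--(ii) are a reformulation of the implication \ref{item:12}$\Rightarrow$\ref{item:14} in Theorem~\ref{th:13}, and that implication needs its own proof; Theorem~\ref{th:14}, which you invoke, presupposes $G\in\mset{S}$ and so cannot help construct it.
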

\begin{proof}
  By Lemma~\ref{lem:2}, $\Gamma(\nu)$ is compact in
  $\cbraces{\mathcal{P}_2(\real{2d}), W_2}$. Since the function
  $S=S(x,y)$ is continuous and has quadratic growth, an optimal plan
  $\gamma$ exists.

  Let $B(y)$ be the $y$-section of $\supp{\gamma}$:
  \begin{displaymath}
    B(y) \set \descr{x\in \real{d}}{(x,y)\in
      \supp{\gamma}}, \quad y \in \real{d}, 
  \end{displaymath}
  and denote
  \begin{displaymath}
    f(y) \set \inf_{x\in B(y)}
    \cbraces{S(x,y) - \frac12 S(x,x)}, \quad
    y\in \real{d},  
  \end{displaymath}
  with the usual convention that $f(y) = \infty$ if
  $B(y)=\emptyset$. Let $\zeta$ be a probability measure on $\real{d}$
  with finite support: $\supp{\zeta} = (y_i)_{i=1}^N$. We claim that
  \begin{equation}
    \label{eq:14}
    \zeta(f) = \sum_{i=1}^N f(y_i) \zeta(\braces{y_i})  \geq
    \frac12 S(\zeta(y),\zeta(y)). 
  \end{equation}
  If $\zeta(f) = \infty$, then~\eqref{eq:14} holds
  trivially. Otherwise, for every $\epsilon>0$ there is
  $x_i\in B(y_i)$ such that
  \begin{displaymath}
    f(y_i) > S(x_i,y_i) - \frac12 S(x_i,x_i) -
    \epsilon, \quad i=1,\dots,N. 
  \end{displaymath}
  Let $\eta$ be a probability measure on $\real{2d}$ such that
  \begin{displaymath}
    \eta(\braces{(x_i,y_i)}) =
    \zeta(\braces{y_i}), \quad i=1,\dots,N.  
  \end{displaymath}
  As $(x_i,y_i) \in \supp{\gamma}$, we deduce that
  $\supp{\eta}\subset \supp{\gamma}$ and then get~\eqref{eq:12} from
  Lemma~\ref{lem:3}. It follows that
  \begin{align*}
    \int f(y) d\zeta
    & = \int f(y) d\eta > \int \cbraces{S(x,y) -
      \frac12 S(x,x)} d\eta - \epsilon \\
    & \geq \frac12 S(\eta(y),\eta(y)) - \epsilon  =
      \frac12 S(\zeta(y),\zeta(y)) - \epsilon.   
  \end{align*}
  As $\epsilon$ is any positive number, we obtain~\eqref{eq:14}.

  The minimality property of Fitzpatrick functions from
  Theorem~\ref{th:13} yields $G\in \mset{S}$ such that $f\geq \psi_G$
  or, equivalently, such that
  \begin{displaymath}
    S(x,y) - \frac12 S(x,x) \geq \psi_G(y), \quad
    (x,y) \in \supp{\gamma}.  
  \end{displaymath}
  Accounting for~\eqref{eq:10}, we obtain~\eqref{eq:8} and the
  optimality of $G$.
\end{proof}

\begin{Lemma}
  \label{lem:5}
  Let $S\in \msym{d}{m}$ and $\nu \in \ps{2}{\real{d}}$. For any
  $\gamma \in \Gamma(\nu)$ and $G\in \mset{S}$, the
  conditions~\ref{item:1} and \ref{item:2} of Theorem~\ref{th:1} are
  equivalent.
\end{Lemma}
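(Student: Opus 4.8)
The plan is to prove the two implications separately, using the duality from Lemma~\ref{lem:4} together with the pointwise inequality~\eqref{eq:9} for one direction and the first-order condition Lemma~\ref{lem:3} for the other.

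First I would prove \ref{item:2} $\Rightarrow$ \ref{item:1}. Assume $x\in P_G(y)$ for every $(x,y)\in\supp\gamma$. By the characterization of $P_G$ in~\eqref{eq:7}, this means $\psi_G(y) = S(x,y) - \frac12 S(x,x)$ for $\gamma$-a.e.\ $(x,y)$, and also $x\in G$ so that $\psi_G(x)=\frac12 S(x,x)$ by Theorem~\ref{th:12}. Integrating the first identity against $\gamma$ and using the martingale property (as in Lemma~\ref{lem:1}) gives
\begin{displaymath}
  \int \psi_G\, d\nu = \int \psi_G(y)\,d\gamma = \int\cbraces{S(x,y)-\tfrac12 S(x,x)}d\gamma = \tfrac12\int S(x,y)\,d\gamma.
\end{displaymath}
Combined with the weak duality inequality~\eqref{eq:10} of Lemma~\ref{lem:1}, which holds for \emph{all} $\gamma'\in\Gamma(\nu)$ and $G'\in\mset{S}$, this shows that $\gamma$ attains the maximum in~\eqref{eq:4} and $G$ attains the minimum in~\eqref{eq:6}: indeed for any competitor $\gamma'$ we have $\frac12\int S\,d\gamma' \le \int\psi_G\,d\nu = \frac12\int S\,d\gamma$, and symmetrically for $G'$. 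So \ref{item:1} holds; this direction is essentially bookkeeping around the already-established weak duality.

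The harder direction is \ref{item:1} $\Rightarrow$ \ref{item:2}. Suppose $\gamma$ is optimal for~\eqref{eq:4} and $G$ is optimal for~\eqref{eq:6}. By Lemma~\ref{lem:4} and~\eqref{eq:10} we have the chain of equalities
\begin{displaymath}
  \tfrac12\int S(x,x)\,d\gamma = \tfrac12\int S(x,y)\,d\gamma = \int\psi_G\,d\nu = \int\psi_G(y)\,d\gamma,
\end{displaymath}
and by Lemma~\ref{lem:1}, $\gamma$-a.s.\ we have $\frac12 S(x,x)\le\psi_G(x)\le\gamma(\psi_G(y)|x)$. Integrating, the outer terms have equal $\gamma$-integrals, so both inequalities in~\eqref{eq:9} are in fact equalities $\gamma$-a.s.: $\frac12 S(x,x)=\psi_G(x)$, which by Theorem~\ref{th:12} forces $x\in G$ $\gamma$-a.s. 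But this only gives the \emph{a.s.} statement, whereas \ref{item:2} demands the inclusion at \emph{every} point of $\supp\gamma$. To upgrade a.s.\ to everywhere, I would argue that the set $K\set\descr{(x,y)}{x\in G,\ \psi_G(y)=S(x,y)-\frac12 S(x,x)}$ is closed (using that $\psi_G$ is closed convex, hence lower semicontinuous, and $G$ can be taken closed — indeed a maximal $S$-monotone set is closed, cf.\ the appendix), so it contains $\supp\gamma$ once it has full $\gamma$-measure; then~\eqref{eq:7} identifies $K$ with the graph of $P_G$. The one subtlety is that I must also rule out $\psi_G(y)=+\infty$ on $\supp\gamma$, but the equality of integrals above shows $\psi_G(y)<\infty$ $\gamma$-a.s., and lower semicontinuity plus closedness of the finiteness region handles the rest. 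I expect the main obstacle to be precisely this a.s.-to-everywhere upgrade: making sure the relevant set is genuinely closed (which is where the properties of $\psi_G$ and of maximal $S$-monotone sets from the appendix get used) and that no measure leaks onto points where $\psi_G=+\infty$.

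Note that Lemma~\ref{lem:3} (the first-order condition) is available as an alternative route for the \ref{item:1} $\Rightarrow$ \ref{item:2} direction, and could replace the Jensen-equality argument if the latter runs into trouble: applying~\eqref{eq:12} to Dirac-type $\eta$ supported at a single point $(x_0,y_0)\in\supp\gamma$ yields $S(x_0,y_0)-\frac12 S(x_0,x_0)\ge\frac12 S(y_0,y_0)$ after a shift, and combined with the general inequality $\psi_G(y_0)\ge S(x_0,y_0)-\frac12 S(x_0,x_0)$ (valid whenever $x_0\in G$) one can pin down $x_0\in P_G(y_0)$ at every such point directly. I would keep this in reserve in case the closedness argument for $K$ is awkward.
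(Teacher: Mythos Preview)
Your main argument is essentially correct and close to the paper's, but there is one missing step in the \ref{item:1}$\Rightarrow$\ref{item:2} direction. From the equality of integrals and~\eqref{eq:9} you correctly deduce $\psi_G(x)=\tfrac12 S(x,x)$, i.e.\ $x\in G$, $\gamma$-a.s. However, this alone does not give $\gamma(K)=1$: you still need $\psi_G(y)=S(x,y)-\tfrac12 S(x,x)$ $\gamma$-a.s., and the Jensen equality $\psi_G(x)=\gamma(\psi_G(y)\mid x)$ does not supply that by itself. The fix is immediate: once $x\in G$ a.s., the definition of $\psi_G$ gives the pointwise inequality $\psi_G(y)\ge S(x,y)-\tfrac12 S(x,x)$ a.s.; integrating and using the martingale identity $\int S(x,y)\,d\gamma=\int S(x,x)\,d\gamma$ together with $\int\psi_G\,d\nu=\tfrac12\int S(x,x)\,d\gamma$ forces equality a.s. Only then may you invoke closedness of $K$ (which does follow, since for $x\in G$ the equality in question is equivalent to $\psi_G(y)\le S(x,y)-\tfrac12 S(x,x)$, a closed condition by lower semicontinuity; no separate finiteness argument is needed, and in fact $\dom\psi_G$ need not be closed). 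The paper streamlines this by using the Fenchel--Young inequality $S(x,y)\le \psi_G(y)+\psi_G^*(Sx)$ and the characterization $x\in G\iff \psi_G^*(Sx)=\tfrac12 S(x,x)$ from Theorem~\ref{th:12}, which packages both conditions into a single pointwise inequality whose integral is saturated; your route via the definition of $\psi_G$ is equally valid once the missing step is inserted.

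Your proposed alternative via Lemma~\ref{lem:3}, however, does not work as stated. Applying~\eqref{eq:12} to $\eta=\delta_{(x_0,y_0)}$ only yields $S(x_0-y_0,x_0-y_0)\le 0$; this neither shows $x_0\in G$, nor---assuming $x_0\in G$---that $\psi_G(y_0)=S(x_0,y_0)-\tfrac12 S(x_0,x_0)$: combining it with $\psi_G(y_0)\ge S(x_0,y_0)-\tfrac12 S(x_0,x_0)$ merely reproduces the universal bound $\psi_G(y_0)\ge\tfrac12 S(y_0,y_0)$. In the paper, Lemma~\ref{lem:3} does its work inside the proof of Lemma~\ref{lem:4} (to \emph{construct} an optimal $G$), not in Lemma~\ref{lem:5}.
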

\begin{proof}
  It is sufficient to show the equivalence of~\ref{item:1} and
  item~\ref{item:5} of Remark~\ref{rem:1}.  By Lemmas~\ref{lem:1}
  and~\ref{lem:4}, $\gamma$ and $G$ are optimal if and only if
  \begin{displaymath}
    \frac12 \int S(x,y) d\gamma = \frac12 \int S(x,x) d\mu
    = \int \psi_G(x) d\mu = \int \psi_G(y) d\nu, 
  \end{displaymath}
  where $\mu$ is the $x$-marginal of $\gamma$.  Theorem~\ref{th:12}
  shows that
  \begin{gather*}
    \psi_G^*(Sx) \geq \psi_G(x) \geq
    \frac12 S(x,x), \quad x\in \real{d},     \\
    x\in G \iff \psi_G(x) = \frac12 S(x,x) \iff \psi^*_G(Sx) = \frac12
    S(x,x).
  \end{gather*}
  The optimality of $\gamma$ and $G$ is thus equivalent to the
  relations:
  \begin{displaymath}
    \mu(G) = 1  \text{ and }  \int
    S(x,y) d\gamma = \int \cbraces{\psi_G(y) +
      \psi^*_G(Sx)} d\gamma.
  \end{displaymath}
  By the construction of conjugate functions,
  \begin{displaymath}
    S(x,y) = \ip{y}{Sx} \leq \psi_G(y) + \psi^*_G(Sx), \quad
    x,y \in \real{d}. 
  \end{displaymath}
  Hence, $\gamma$ and $G$ are optimal if and only if
  \begin{displaymath}
    x \in G \text{ and } S(x,y) = \psi_G(y) +
    \psi^*_G(Sx), \quad \as{\gamma}, 
  \end{displaymath}
  which is equivalent to item~\ref{item:5} of Remark~\ref{rem:1},
  because $G$ is a closed set and $\psi_G$ and $\psi^*_G$ are lower
  semi-continuous functions.
\end{proof}

We denote by $\ri{C}$ the relative interior of a convex set $C$, that
is, the interior with respect to the Euclidean metric structure
restricted to the affine hull of $C$.  If $C$ is a point:
$C = \braces{x_0}$, then $\ri{C} \set \braces{x_0}$.

\begin{Lemma}
  \label{lem:6}
  Let $C$ be a nonempty closed convex set in $\real{d}$ and $x\in
  C$. Then
  \begin{displaymath}
    B_C(x) \set \descr{y\in C}{x + \epsilon(x-y) \in
      C \text{ for some } \epsilon>0}
  \end{displaymath}
  is the largest convex subset of $C$ whose relative interior contains
  $x$. In addition, the set $B_C(x)$ is closed.
\end{Lemma}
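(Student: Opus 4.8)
My plan is to verify in turn that $B_C(x)$ is convex and contains $x$, that $x$ lies in its relative interior, that $B_C(x)$ is the largest convex subset of $C$ with that property, and finally that it is closed --- this last point being the only one requiring a genuine idea, and it will come essentially for free once the ``largest'' statement is available. The workhorse will be the \emph{prolongation property} of relative interiors: if $D\subseteq\real{d}$ is convex and $x\in\ri D$, then for every $y\in D$ there is $\epsilon>0$ with $(1+\epsilon)x-\epsilon y\in D$. (For $y\neq x$ this is immediate: a relative ball of $\aff D$ about $x$ lies in $D$, and $(1+\epsilon)x-\epsilon y$ is an affine combination of $x,y\in\aff D$ at Euclidean distance $\epsilon\abs{x-y}$ from $x$. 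I shall also use the converse implication; both are in \cite[Theorem~6.4]{Rock:70}.) A second, trivial observation used repeatedly: if $(1+\epsilon_0)x-\epsilon_0 y\in C$ and $0<\epsilon\leq\epsilon_0$, then $(1+\epsilon)x-\epsilon y$ is a convex combination of $x$ and $(1+\epsilon_0)x-\epsilon_0 y$, hence lies in $C$.

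First I would note that $x\in B_C(x)$ and $B_C(x)\subseteq C$ hold directly by definition. For convexity, given $y_1,y_2\in B_C(x)$ with witnesses $\epsilon_1,\epsilon_2$ and $\lambda\in[0,1]$, set $\epsilon=\min(\epsilon_1,\epsilon_2)$; the trivial observation puts both $(1+\epsilon)x-\epsilon y_i$ in $C$, and their $(\lambda,1-\lambda)$-combination equals $(1+\epsilon)x-\epsilon(\lambda y_1+(1-\lambda)y_2)\in C$. To get $x\in\ri B_C(x)$, take $y\in B_C(x)$ with witness $\epsilon_0>0$ and fix $t\in(0,\epsilon_0]$: then $w=x+t(x-y)$ is a convex combination of $x$ and $(1+\epsilon_0)x-\epsilon_0 y$, so $w\in C$; and since $(1+\epsilon)x-\epsilon w=(1-\epsilon t)x+\epsilon t\,y\in C$ whenever $\epsilon t\leq1$, in fact $w\in B_C(x)$. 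Thus $(1-\mu)y+\mu x=x+(\mu-1)(x-y)\in B_C(x)$ for $\mu=1+t>1$, and since $y\in B_C(x)$ was arbitrary, \cite[Theorem~6.4]{Rock:70} gives $x\in\ri B_C(x)$ (the convention $\ri\braces{x}=\braces{x}$ covers the degenerate case).

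For maximality, let $D\subseteq C$ be convex with $x\in\ri D$. For any $y\in D$ the prolongation property supplies $\epsilon>0$ with $(1+\epsilon)x-\epsilon y\in D\subseteq C$, i.e.\ $y\in B_C(x)$; hence $D\subseteq B_C(x)$. Combined with the two preceding paragraphs, this shows $B_C(x)$ is the largest convex subset of $C$ whose relative interior contains $x$. Closedness then follows by applying this very fact to the competitor $D=\cl B_C(x)$: it is convex, it is contained in $\cl C=C$, and since a convex set and its closure have the same affine hull we get $x\in\ri B_C(x)\subseteq\ri\cl B_C(x)$. Maximality now forces $\cl B_C(x)\subseteq B_C(x)$, so $B_C(x)$ is closed.

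I expect the only subtle step to be the last one. Rather than chasing sequences $y_n\in B_C(x)$ whose witnesses $\epsilon_n$ could collapse to $0$, one passes to $\cl B_C(x)$, notes that $x$ remains a relative interior point of it, and lets the prolongation property drag each point of the closure back into $B_C(x)$. Everything else is routine bookkeeping with convex combinations.
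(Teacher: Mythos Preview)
Your proof is correct and follows essentially the same route as the paper's: convexity from convexity of $C$, the prolongation characterization of relative interior (\cite[Theorem~6.4]{Rock:70}) to establish both $x\in\ri B_C(x)$ and maximality, and then closedness by applying maximality to $\cl B_C(x)$ together with $\ri B_C(x)=\ri\cl B_C(x)$. The paper compresses your argument for $x\in\ri B_C(x)$ into a single sentence, but the content is the same; you have simply written out the verification that $w=x+t(x-y)$ lies in $B_C(x)$ (not merely in $C$) in full.
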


\begin{proof}
  The convexity of $B_C(x)$ follows from that of $C$.
  
  Let $A$ be a convex set. Then $x\in \ri A$ if and only if for every
  $y\in A$ there is $\epsilon>0$ such that $x+ \epsilon (x-y) \in
  A$. This fact shows first that $x \in \ri{B_C(x)}$ and then that
  $B_C(x)$ is the largest convex subset of $C$ whose relative interior
  contains $x$.

  Being closed, the set $C$ contains $\cl{B_C(x)}$, the closure of
  $B_C(x)$. Since $x \in \ri{B_C(x)} = \ri{\cl{B_C(x)}}$, we obtain
  that $B_C(x) = \cl{B_C(x)}$.
\end{proof}

\begin{Lemma}
  \label{lem:7}
  Let $C$ be a closed convex set in $\real{d}$,
  $\mu \in \ps{1}{\real{d}}$ be such that $\supp{\mu} \subset C$, and
  denote $v\set \mu(x) = \int xd\mu$.  Then
  \begin{displaymath}
    v \in C \text{ and } \supp{\mu} \subset B_C(v). 
  \end{displaymath}
\end{Lemma}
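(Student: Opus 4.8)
The plan is to deduce the statement from Lemma~\ref{lem:6} applied to the set $K \set \cl\conv(\supp\mu)$, the closed convex hull of the support of $\mu$. Since $C$ is closed and convex and contains $\supp\mu$, it contains $K$; thus $K$ is a closed convex subset of $C$ with $\supp\mu\subset K$. It therefore suffices to show that $v\in\ri K$: Lemma~\ref{lem:6} then identifies $B_C(v)$ as the largest convex subset of $C$ whose relative interior contains $v$, so $K\subset B_C(v)$, and consequently $v\in K\subset C$ and $\supp\mu\subset K\subset B_C(v)$, which is the assertion.

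To prove $v\in\ri K$, I would first note $v\in K$: every closed half-space $\descr{z\in\real{d}}{\ip{a}{z}\le c}$ containing $\supp\mu$ also contains $v$, because $x\mapsto\ip{a}{x}$ is $\mu$-integrable (here $\mu\in\ps{1}{\real{d}}$ is used) and $\ip{a}{v}=\int\ip{a}{x}\,d\mu\le c$; since $K$ is the intersection of all such half-spaces, $v\in K$. Now suppose, for contradiction, that $v\in K\setminus\ri K$. By proper separation of the point $v$ from the convex set $K$ (see \cite{Rock:70}), there is $a\neq 0$ with $\ip{a}{x}\le\ip{a}{v}$ for all $x\in K$ and with $\ip{a}{\cdot}$ non-constant on $K$. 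Integrating this inequality against $\mu$ and using $\int\ip{a}{x}\,d\mu=\ip{a}{v}$ gives $\int(\ip{a}{v}-\ip{a}{x})\,d\mu=0$ with a $\mu$-a.e.\ nonnegative integrand, hence $\ip{a}{x}=\ip{a}{v}$ for $\mu$-a.e.\ $x$. Since the hyperplane $\descr{z\in\real{d}}{\ip{a}{z}=\ip{a}{v}}$ is closed and carries full $\mu$-measure, it contains $\supp\mu$ and therefore $K$, contradicting that $\ip{a}{\cdot}$ is non-constant on $K$. (If $\supp\mu$ is a single point $\braces{x_0}$, then $v=x_0\in\ri K=\braces{x_0}$ by the convention for the relative interior of a point.) This establishes $v\in\ri K$.

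The rest is the bookkeeping of the first paragraph. I expect the only substantive point to be the relative-interior claim $v\in\ri K$; within it, the single thing to be careful about is the integrability of the separating linear functional, which is precisely what the hypothesis $\mu\in\ps{1}{\real{d}}$ provides.
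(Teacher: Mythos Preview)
Your proof is correct and follows essentially the same route as the paper: both take $K=\cl\conv(\supp\mu)\subset C$, show that the barycentre $v$ lies in $\ri K$ by a linear-functional contradiction argument, and then invoke Lemma~\ref{lem:6} to conclude $K\subset B_C(v)$. The only cosmetic difference is that the paper first passes to the affine hull of $K$ so as to work with a boundary normal vector, whereas you appeal directly to proper separation; the substance is identical.
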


\begin{proof}
  Let $D\set \clconv{\supp{\mu}}$, the closed convex hull of the
  support of $\mu$. Moving, if necessary, to the affine hull of $D$ we
  can assume that $D$ has a nonempty interior. Clearly, $v\in D$. If
  $v\not\in \interior{D}$, then $v$ belongs to the boundary of
  $D$. Fix a normal vector $u$ to $D$ at $v$. We have that
  \begin{align*}
    \ip{u}{v}  \geq \ip{u}{x}, \quad x \in D.   
  \end{align*}
  As $D$ has a nonempty interior, it is not contained in the
  hyperplane
  \begin{displaymath}
    H \set \descr{x\in \real{d}}{\ip{u}{v} = \ip{u}{x}}.
  \end{displaymath}
  Since $D = \clconv{\supp{\mu}}$, we obtain that
  \begin{displaymath}
    \mu(\descr{x}{\ip{u}{v}  \geq \ip{u}{x}}) = 1 \text{ and }
    \mu(\descr{x}{\ip{u}{v}  > \ip{u}{x}}) > 0 
  \end{displaymath}
  and then arrive to a contradiction:
  \begin{displaymath}
    \ip{u}{v} > \int \ip{u}{x} d\mu = \ip{u}{v}. 
  \end{displaymath}  
  Thus, $v \in \ri{D}$. As $D \subset C$, Lemma~\ref{lem:6} shows that
  $D\subset B_C(v)$.
\end{proof}

The following lemma concludes the proof of Theorem~\ref{th:1}.

\begin{Lemma}
  \label{lem:8}
  Let $S\in \msym{d}{m}$ and $\nu \in \ps{2}{\real{d}}$. For
  $\gamma \in \Gamma(\nu)$ and $G\in \mset{S}$, the
  conditions~\ref{item:2} and \ref{item:3} of Theorem~\ref{th:1} are
  equivalent.
\end{Lemma}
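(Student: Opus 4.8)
The plan is to prove the two implications separately, exploiting the characterization of $Q_G(x)$ from Lemma~\ref{lem:6} (applied to the closed convex set $C = P_G^{-1}(x)$, so that $Q_G(x) = B_C(x)$) together with the structural results on $P_G$, $P_G^{-1}$, and $\psi_G^*$ recorded before the theorem. Throughout, let $\mu$ denote the $x$-marginal of $\gamma$ and recall that $P_G^{-1}(x) = \partial \psi_G^*(Sx)$ is closed and convex for each $x \in G$, and that the martingale property of $\gamma$ means $\gamma(y|x) = x$, i.e.\ the conditional law $\gamma(dy|x)$ has barycenter $x$.

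For the implication \ref{item:2} $\Rightarrow$ \ref{item:3}, I would disintegrate $\gamma$ as $\gamma(dx,dy) = \mu(dx)\,\gamma(dy|x)$. Condition~\ref{item:2} says $(x,y) \in \supp\gamma$ implies $x \in P_G(y)$, equivalently $y \in P_G^{-1}(x)$; hence for $\mu$-a.e.\ $x$ the conditional measure $\gamma(dy|x)$ is supported in the closed convex set $C_x := P_G^{-1}(x)$, and in particular $\mu(G) = 1$. Since $\gamma(y|x) = x$ is the barycenter of $\gamma(dy|x)$, Lemma~\ref{lem:7} applied with $C = C_x$ and $\mu$ replaced by $\gamma(dy|x)$ gives $x \in C_x$ (already known, as $x \in P_G(x)$ for $x \in G$) and $\supp \gamma(dy|x) \subset B_{C_x}(x) = Q_G(x)$. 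Therefore $y \in Q_G(x)$ for $\mu$-a.e.\ $x$ and $\gamma(dy|x)$-a.e.\ $y$, which is exactly condition~\ref{item:3}. A small technical point to handle: the disintegration and the a.e.\ statements should be arranged so that ``$x \in P_G(y)$ for every $(x,y) \in \supp\gamma$'' does indeed force $\supp\gamma(dy|x) \subset C_x$ for $\mu$-a.e.\ $x$; this follows because $\supp\gamma(dy|x) \subset \{y : (x,y) \in \supp\gamma\} = B(y)$-type sections, using that the support of a disintegration sits inside the sections of the support for a.e.\ $x$.

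For the converse \ref{item:3} $\Rightarrow$ \ref{item:2}, I would argue that $\graph Q_G \subset \graph P_G^{-1}$, i.e.\ $y \in Q_G(x)$ already implies $x \in P_G(y)$: indeed $Q_G(x) = B_{C_x}(x) \subset C_x = P_G^{-1}(x)$ by Lemma~\ref{lem:6}, so $x \in P_G(y)$. Thus condition~\ref{item:3} gives $x \in P_G(y)$ for $\gamma$-a.e.\ $(x,y)$. To upgrade ``a.e.'' to ``every point of $\supp\gamma$'', I would use that $\graph Q_G$ is an $F_\sigma$-set (as noted in the text before the theorem) and that $\graph P_G^{-1} = \graph\{(x,y): Sx \in \partial\psi_G(y),\ x \in G\}$ is closed (subdifferentials of the lower semicontinuous convex function $\psi_G$ have closed graph, and $G$ is closed); hence the $\gamma$-a.e.\ inclusion $(x,y) \in \graph P_G^{-1}$, a closed set, propagates to all of $\supp\gamma$, giving~\ref{item:2}.

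The main obstacle I anticipate is the measure-theoretic bookkeeping in the direction \ref{item:2} $\Rightarrow$ \ref{item:3}: passing from the pointwise statement on $\supp\gamma$ to the almost-everywhere confinement of the conditional measures $\gamma(dy|x)$ inside $C_x = P_G^{-1}(x)$, and then correctly invoking Lemma~\ref{lem:7} fibrewise. One must check that the map $x \mapsto C_x$ is suitably measurable (it is, since $C_x = \partial\psi_G^*(Sx)$ and subdifferential multifunctions are Borel-measurable), that a regular conditional distribution $\gamma(dy|x)$ exists (it does, as we are in $\real{d}$), and that for $\mu$-a.e.\ $x$ the support of $\gamma(dy|x)$ is contained in the $x$-section of $\supp\gamma$. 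Granting these standard facts, the rest is a direct application of Lemmas~\ref{lem:6} and~\ref{lem:7}, and the proof is short.
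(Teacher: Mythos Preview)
Your proposal is correct and follows essentially the same route as the paper: disintegrate $\gamma$ via a regular conditional probability, apply Lemma~\ref{lem:7} fibrewise to the closed convex set $P_G^{-1}(x)=\partial\psi_G^*(Sx)$ to obtain $\supp\gamma(dy|x)\subset Q_G(x)$, and for the converse use that $\graph Q_G$ sits inside the closed set $\{(x,y):x\in G,\ y\in\partial\psi_G^*(Sx)\}$ to pass from a.e.\ to all of $\supp\gamma$. The only cosmetic difference is that the paper phrases everything through item~\ref{item:5} of Remark~\ref{rem:1} rather than directly through~\ref{item:2}, which slightly streamlines the measure-theoretic bookkeeping you flagged.
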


\begin{proof}
  It is sufficient to show the equivalence of~\ref{item:3} and
  item~\ref{item:5} of Remark~\ref{rem:1}.
  
  \ref{item:5} $\implies$ \ref{item:3}: Being a closed set, $G$
  contains the support of $\mu(dx) \set \gamma(dx,
  \real{d})$. Accounting for the martingale property of $\gamma$ and
  the property~\eqref{eq:3} of subdifferentials of convex functions,
  we can choose a regular version of the conditional probability
  $K(x, dy) \set \gamma(dy|x)$ such that
  \begin{gather*}
    \gamma(y|x) = \int y K(x, dy) = x \text{ and } K(x, \partial
    \psi^*_G(Sx)) =1 \text{ for every } x \in G.
  \end{gather*}
  By Theorem~\ref{th:12},
  $G = \descr{x}{x \in \partial \psi_G^*(Sx)}$. Using the notation of
  Lemma~\ref{lem:6}, we can write $Q_G(x)$ as
  \begin{displaymath}
    Q_G(x) = B_{\partial \psi^*_G(Sx)}(x), \quad x
    \in G.  
  \end{displaymath}
  Lemma~\ref{lem:7} now yields that $K(x, Q_G(x)) = 1$ for every
  $x \in G$. The Fubini theorem concludes the argument:
  \begin{displaymath}
    \gamma(\graph{Q_G}) = \int_G \mu(dx) K(x, Q_G(x)) =
    \mu(G) = 1. 
  \end{displaymath}

  \ref{item:3} $\implies$ \ref{item:5}: If $x\in G$, then
  \begin{displaymath}
    y \in Q_G(x) \quad\implies\quad y \in \partial
    \psi^*_G(Sx)\quad\iff \quad Sx \in \partial \psi_G(y). 
  \end{displaymath}
  It follows that
  \begin{displaymath}
    \graph{Q_G}  \subset A \set \descr{(x,y)}{x\in G \text{ and } y \in
      \partial \psi^*_G(Sx)}
  \end{displaymath}
  and then that $\gamma(A) = 1$. By the closedness of $G$ and the
  continuity of the subdifferentials of convex functions, $A$ is a
  closed set.  Hence, $\supp{\gamma} \subset A$, which is
  exactly~\ref{item:5}.
\end{proof}

\section{Properties of optimal plans}
\label{sec:prop-an-optim}

In this section we state some corollaries of Theorem~\ref{th:1}.  We
start with a result showing that, for a \emph{fixed} $G\in \mset{S}$,
the $F_\sigma$-set
\begin{displaymath}
  \graph{Q_G} = \descr{(x, y)}{x \in G,\, y\in Q_G(x)}
\end{displaymath}
is \emph{pointwise minimal} among all Borel sets $B$ in $\real{2d}$
with the property that $\gamma(B) = 1$ for any
$\gamma \in \ps{2}{\real{2d}}$ such that $\gamma$ and $G$ are
solutions to~\eqref{eq:4} and~\eqref{eq:6} for \emph{some}
$\nu \in \ps{2}{\real{d}}$.

\begin{Theorem}
  \label{th:2}
  Let $S\in \msym{d}{m}$ and $G\in \mset{S}$. Then
  \begin{displaymath}
    \graph{Q_G}  =
    \bigcup_{\gamma \in \Gamma_f(G)} \supp{\gamma},
  \end{displaymath}
  where $\Gamma_f(G)$ is the family of probability measures $\gamma$
  on $\real{2d}$ having a finite support and such that $\gamma$ and
  $G$ are solutions to~\eqref{eq:4} and~\eqref{eq:6} for
  $\nu(dy) = \gamma(\real{d}, dy)$.
\end{Theorem}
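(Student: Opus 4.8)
The plan is to establish the two inclusions separately; I expect $\bigcup_{\gamma\in\Gamma_f(G)}\supp{\gamma}\subset\graph{Q_G}$ to be an immediate consequence of Theorem~\ref{th:1}, and the reverse inclusion to follow from a short explicit construction of two-atom optimal plans.

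For the first inclusion, fix $\gamma\in\Gamma_f(G)$. By the definition of $\Gamma_f(G)$, $\gamma$ and $G$ solve~\eqref{eq:4} and~\eqref{eq:6} for $\nu\set\gamma(\real{d},\cdot)$, so item~\ref{item:3} of Theorem~\ref{th:1} gives $\gamma(\graph{Q_G})=1$; this is meaningful because $\graph{Q_G}$ was shown above to be an $F_\sigma$, hence Borel, set. Since $\gamma$ has finite support, each point of $\supp{\gamma}$ is an atom of strictly positive mass and therefore must lie in every Borel set of full $\gamma$-measure; applied to $\graph{Q_G}$ this gives $\supp{\gamma}\subset\graph{Q_G}$, and taking the union over $\gamma\in\Gamma_f(G)$ completes this direction.

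For the reverse inclusion I would produce, for each $(x,y)\in\graph{Q_G}$, an element of $\Gamma_f(G)$ whose support contains $(x,y)$. Fix such a pair, so $x\in G$ and $y\in Q_G(x)$; by the description of $Q_G(x)$ recorded just before Lemma~\ref{lem:6}, there are $z\in\real{d}$ and $t\in(0,1)$ with $x=ty+(1-t)z$ and $x\in P_G(y)\cap P_G(z)$. I then set $\gamma\set t\,\delta_{(x,y)}+(1-t)\,\delta_{(x,z)}$, a probability measure on $\real{2d}$ with support inside $\{(x,y),(x,z)\}$, and $\nu\set\gamma(\real{d},\cdot)=t\,\delta_y+(1-t)\,\delta_z$. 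Both measures are finitely supported, hence $\gamma\in\ps{2}{\real{2d}}$ and $\nu\in\ps{2}{\real{d}}$; and since both atoms of $\gamma$ have first coordinate $x$ and $ty+(1-t)z=x$, the conditional expectation of the second coordinate given the first equals the first, i.e.\ the martingale property holds and $\gamma\in\Gamma(\nu)$. Every point of $\supp{\gamma}$ is one of $(x,y)$ or $(x,z)$, and for these the ``backward evolution'' relation $x\in P_G(y)$, resp.\ $x\in P_G(z)$, holds by construction; thus item~\ref{item:2} of Theorem~\ref{th:1} is satisfied, and by its equivalence with item~\ref{item:1} we conclude that $\gamma$ solves~\eqref{eq:4} and $G$ solves~\eqref{eq:6} for $\nu$, i.e.\ $\gamma\in\Gamma_f(G)$. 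Finally $t>0$ forces $(x,y)\in\supp{\gamma}$, so $\graph{Q_G}\subset\bigcup_{\gamma\in\Gamma_f(G)}\supp{\gamma}$.

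I do not expect a genuine obstacle here: the content of the theorem is essentially that the ``two projecting rays from a common base point $x$, with $x$ in the open segment joining their tips'' picture built into $Q_G$ is precisely what is seen by finitely supported optimal plans, and Theorem~\ref{th:1} translates that picture into optimality. The two points that need a little care are the elementary fact that a full-measure Borel set must contain the support of a \emph{finitely supported} measure (false for general measures, which is why it does no harm that $\graph{Q_G}$ is only $F_\sigma$ and not closed), and the verification that the two-atom measure above really lies in $\Gamma(\nu)$, so that the optimality criterion of Theorem~\ref{th:1} can be invoked.
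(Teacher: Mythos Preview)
Your proof is correct and follows essentially the same approach as the paper: the first inclusion via item~\ref{item:3} of Theorem~\ref{th:1} together with the finite-support observation, and the reverse inclusion by building a two-atom martingale measure from the segment representation $x=ty+(1-t)z$. The only cosmetic difference is that the paper verifies optimality of the two-atom measure via item~\ref{item:3} (noting that the companion point also lies in $Q_G(x)$), whereas you invoke item~\ref{item:2}; by Theorem~\ref{th:1} these are equivalent, so there is no substantive distinction.
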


\begin{proof}
  Item~\ref{item:3} of Theorem~\ref{th:1} shows that
  $\supp{\gamma} \subset \graph{Q_G}$ for every
  $\gamma \in \Gamma_f(G)$.

  Conversely, let $x\in G$ and $u \in Q_G(x)$.  The construction of
  $Q_G(x)$ yields $v \in Q_G(x)$ and $t\in (0,1)$ such that
  $x = t u + (1-t) v$. Setting
  \begin{displaymath}
    \nu(\braces{u}) = \gamma(\braces{(x, u)}) = t, \quad \nu(\braces{v}) =
    \gamma(\braces{(x, v)}) = 1-t, 
  \end{displaymath}
  we deduce that $\gamma \in \Gamma(\nu)$ and its support set
  $\braces{(x,u), (x, v)}$ is a subset of $\graph{Q_G}$. By
  Theorem~\ref{th:1}, $G$ is optimal for~\eqref{eq:6}. Hence,
  $\gamma \in \Gamma_f(G)$.
\end{proof}

For $\mu, \nu \in \ps{2}{\real{d}}$, we write $\mu \cord\nu$ if $\mu$
is dominated by $\nu$ in the convex order: $\mu(f) \leq \nu(f)$ for
every positive convex function $f$.  By \cite[Theorem~2]{Strassen:65},
$\mu \cord \nu$ if and only if $\mu$ is the $x$-marginal of some
$\gamma\in \Gamma(\nu)$.

\begin{Theorem}
  \label{th:3}
  Let $S \in \msym{d}{m}$ and $\nu \in \ps{2}{\real{d}}$. The problem
  \begin{equation}
    \label{eq:15}
    \text{maximize} \quad \frac12 \int S(x,x) d\mu 
    \quad\text{over}\quad \mu \prec_{\text{conv}} \nu,
  \end{equation}
  has a solution.  Let $\mu \cord \nu$, $G\in \mset{S}$, and $\gamma$
  belongs to $\Gamma(\nu)$ and has $\mu$ as its $x$-marginal:
  $\mu(dx) = \gamma(dx,\real{d})$.  Then
  \begin{enumerate}[label = {\rm (\alph{*})}, ref={\rm (\alph{*})}]
  \item \label{item:6} $\mu$ solves~\eqref{eq:15} if and only if
    $\gamma$ is an optimal plan for~\eqref{eq:4}.
  \item \label{item:7} $\mu$ solves~\eqref{eq:15} and $G$ is an
    optimal set for~\eqref{eq:6} if and only if
    \begin{displaymath}
      \int \psi_G(y) d \nu =  \frac12 \int S(x,x) d\mu. 
    \end{displaymath}
    In this case, $\supp{\mu} \subset G$.
  \end{enumerate}
\end{Theorem}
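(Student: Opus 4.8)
The plan is to read off everything from Theorem~\ref{th:1} using two ingredients: Strassen's theorem (quoted just above the statement), and the martingale identity $\frac12\int S(x,x)\,d\mu = \frac12\int S(x,y)\,d\gamma$, valid for every $\gamma\in\Gamma(\nu)$ whose $x$-marginal is $\mu$. The key observation is that the objective in~\eqref{eq:15} depends only on $\mu$, and through this identity it coincides with the objective in~\eqref{eq:4} evaluated at \emph{any} $\gamma\in\Gamma(\nu)$ with $x$-marginal $\mu$.

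First I would set up the dictionary between~\eqref{eq:15} and~\eqref{eq:4}. If $\mu\cord\nu$ with $\nu\in\ps{2}{\real{d}}$, then $\mu\in\ps{2}{\real{d}}$ as well, and by Strassen's theorem $\mu$ is the $x$-marginal of some $\gamma\in\Gamma(\nu)$; conversely, every $\gamma\in\Gamma(\nu)$ has $x$-marginal $\cord\nu$. Combined with the martingale identity above, this shows that~\eqref{eq:15} and~\eqref{eq:4} have the same value, and that for a fixed $\mu\cord\nu$ and any (equivalently, some) $\gamma\in\Gamma(\nu)$ with $x$-marginal $\mu$, the measure $\mu$ solves~\eqref{eq:15} if and only if $\gamma$ is optimal for~\eqref{eq:4} --- which is exactly item~\ref{item:6}. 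Existence of a solution of~\eqref{eq:15} follows at once: take the $x$-marginal of an optimal plan for~\eqref{eq:4}, which exists by Theorem~\ref{th:1} (Lemma~\ref{lem:4}).

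For item~\ref{item:7}, I would combine item~\ref{item:6} with the duality~\eqref{eq:8}. By~\eqref{eq:10} one always has $\frac12\int S(x,x)\,d\mu=\frac12\int S(x,y)\,d\gamma\le \int\psi_G\,d\nu$, and by Theorem~\ref{th:1} equality between the two extreme sides forces $\gamma$ to be optimal for~\eqref{eq:4} and $G$ to be optimal for~\eqref{eq:6}; conversely, if $\mu$ solves~\eqref{eq:15} (so that $\gamma$ is optimal for~\eqref{eq:4} by item~\ref{item:6}) and $G$ is optimal for~\eqref{eq:6}, then~\eqref{eq:8} gives $\frac12\int S(x,x)\,d\mu=\frac12\int S(x,y)\,d\gamma=\int\psi_G\,d\nu$. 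This is the claimed equivalence. In that situation, item~\ref{item:3} of Theorem~\ref{th:1} yields $x\in G$, $\as{\gamma}$, hence $\mu(G)=1$; since a maximal $S$-monotone set is closed (the closure of an $S$-monotone set is again $S$-monotone by continuity of $S(\cdot,\cdot)$), we conclude $\supp \mu\subset G$.

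I do not anticipate a genuine obstacle: the argument is bookkeeping on top of Theorem~\ref{th:1}. The only point needing a little care is that the value identity must hold uniformly over all $\gamma\in\Gamma(\nu)$ sharing a given $x$-marginal (it does, since it uses only $\gamma(y|x)=x$), so that ``$\mu$ solves~\eqref{eq:15}'' and ``$\gamma$ optimal for~\eqref{eq:4}'' are genuinely equivalent rather than one merely implying the other.
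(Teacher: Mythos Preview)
Your proposal is correct and follows essentially the same route as the paper: both rely on the martingale identity $\int S(x,y)\,d\gamma=\int S(x,x)\,d\mu$ to identify~\eqref{eq:15} with~\eqref{eq:4}, then invoke Theorem~\ref{th:1} (the duality~\eqref{eq:8}) for~\ref{item:7} and the closedness of $G$ for $\supp\mu\subset G$. Your write-up is simply more explicit than the paper's three-line proof, in particular in spelling out Strassen's role and the reason $G$ is closed.
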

\begin{proof}
  From the martingale property of $\gamma$ we deduce that
  \begin{align*}
    \int S(x,y) d\gamma = \int S(x,x) d\gamma = \int S(x,x) d\mu
  \end{align*}
  and then get~\ref{item:6}. We obtain~\ref{item:7} from the identity
  above, Theorem~\ref{th:1}, and the fact that $G$ is closed.
\end{proof}

\begin{Remark}
  \label{rem:2}
  Assume that the problem \eqref{eq:4} admits a \emph{unique} optimal
  plan $\gamma\in \Gamma(\nu)$. According to Theorem \ref{th:3},
  $\mu(dx)\set\gamma(dx,\real{d})$ is the unique solution for
  \eqref{eq:15} and $\gamma$ is the \emph{unique martingale coupling}
  between $\mu$ and $\nu$:
  \begin{displaymath}
    \rho\in \Gamma (\nu), \ \rho(dx,\real{d})=\mu(dx)\implies
    \rho=\gamma.
  \end{displaymath}
  The latter property is quite special. Usually, probability measures
  $\mu$ and $\nu$ such that $\mu\cord\nu$ can be coupled by an
  infinite number of martingale measures.
\end{Remark}

We now show that an optimal plan $\gamma$ for~\eqref{eq:4} is a
classical optimal $\mathcal{L}_2$-coupling between its $Sx$ and $y$
marginals.

\begin{Theorem}
  \label{th:4}
  Let $S\in \msym{d}{m}$, $\nu \in \ps{2}{\real{d}}$, $\gamma$ be an
  optimal plan for~\eqref{eq:4}, and $\mu$ be the $x$-marginal of
  $\gamma$. Then $\gamma$ is a solution of the (unconstrained) optimal
  transport problem:
  \begin{displaymath}
    \text{maximize} \quad \int
    S(x,y) d\pi = \int \ip{Sx}{y}d\pi \quad 
    \text{over} \quad \pi\in \Pi(\mu,\nu),    
  \end{displaymath}
  where $\Pi(\mu, \nu)$ is the family of Borel probability measures on
  $\real{d}\times \real{d}$ with $x$-marginal $\mu$ and $y$-marginal
  $\nu$.
\end{Theorem}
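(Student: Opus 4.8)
The plan is to exhibit explicit optimizers for the dual unconstrained transport problem and check that $\gamma$ attains the primal value, so that by weak duality $\gamma$ must be optimal. By Theorem~\ref{th:1} (and Remark~\ref{rem:1}, item~\ref{item:5}), optimality of $\gamma$ gives a maximal $S$-monotone set $G$ with $Sx \in \partial \psi_G(y)$ and $x\in G$ for $\gamma$-almost every $(x,y)$; by~\eqref{eq:3} this is the same as $x\in G$ together with $y\in\partial\psi_G^*(Sx)$ and the Fenchel equality
\begin{displaymath}
  \ip{Sx}{y} = \psi_G(y) + \psi_G^*(Sx), \quad \as{\gamma}.
\end{displaymath}
The idea is that $\psi_G$ and $\psi_G^*$ serve as a pair of Kantorovich potentials: for the cost $c(x,y) = \ip{Sx}{y}$, the natural candidate potentials on the $y$-side and the $Sx$-side are $\psi_G(y)$ and $\psi_G^*(\cdot)$ evaluated at $Sx$.

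First I would record the pointwise (``global'') inequality
\begin{displaymath}
  \ip{Sx}{y} \leq \psi_G^*(Sx) + \psi_G(y), \quad x,y\in\real{d},
\end{displaymath}
which is just the Fenchel--Young inequality for the convex function $\psi_G$ and its conjugate. Integrating this against an arbitrary $\pi\in\Pi(\mu,\nu)$ gives
\begin{displaymath}
  \int \ip{Sx}{y}\, d\pi \leq \int \psi_G^*(Sx)\, d\mu + \int \psi_G(y)\, d\nu,
\end{displaymath}
since the first term on the right depends only on the $x$-marginal $\mu$ and the second only on the $y$-marginal $\nu$; both are therefore constants independent of $\pi$. (One should note the integrals are finite: $\mu\in\ps{2}{\real{d}}$, $\nu\in\ps{2}{\real{d}}$, and $\psi_G$, $\psi_G^*$ have quadratic growth by the basic Fitzpatrick estimates of the Appendix, e.g. $\frac12 S(x,x)\le\psi_G(x)$ and the bounds used in Lemma~\ref{lem:5}; this needs a brief check but is routine.) This establishes that the right-hand side is an upper bound for the unconstrained transport value.

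Next I would show $\gamma$ attains this bound. By the Fenchel equality above, which holds $\gamma$-a.s.,
\begin{displaymath}
  \int \ip{Sx}{y}\, d\gamma = \int \psi_G^*(Sx)\, d\gamma + \int \psi_G(y)\, d\gamma = \int \psi_G^*(Sx)\, d\mu + \int \psi_G(y)\, d\nu,
\end{displaymath}
using that $\gamma$ has $x$-marginal $\mu$ and $y$-marginal $\nu$. Since $\gamma\in\Pi(\mu,\nu)$ and it achieves the upper bound, $\gamma$ is optimal for the unconstrained problem, and $\int S(x,y)\,d\gamma = \int\ip{Sx}{y}\,d\gamma$ is exactly the stated maximal value. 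Finally, I would remark that $\int S(x,y)\,d\gamma$ equals the primal martingale value $\frac12\int S(x,y)\,d\gamma$ up to nothing — actually the two problems differ only by the factor $\tfrac12$ and the fact that here we maximize over the larger class $\Pi(\mu,\nu)\supset\{\gamma\}$; the point is that enlarging the feasible set from martingale couplings of $(\mu,\nu)$ to all couplings does not increase the optimal value of $\int S(x,y)\,d\pi$ beyond what $\gamma$ already attains.

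The only genuine obstacle is integrability: one must be sure $\int\psi_G^*(Sx)\,d\mu<\infty$ and $\int\psi_G(y)\,d\nu<\infty$ so that the splitting of the integral is legitimate and the Fenchel equality can be integrated termwise. This follows from the quadratic-growth bounds on $\psi_G$ and $\psi_G^*$ (Theorem~\ref{th:12} and the Appendix) combined with $\mu,\nu\in\ps{2}{\real{d}}$; everything else is a direct application of Fenchel--Young duality together with the complementary-slackness relation already extracted from Theorem~\ref{th:1}.
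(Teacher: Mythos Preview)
Your proof is correct and follows the same line as the paper's: use $\psi_G$ and $\psi_G^*(S\cdot)$ as Kantorovich potentials via Theorem~\ref{th:1}/Remark~\ref{rem:1}, apply the Fenchel--Young inequality to bound $\int S(x,y)\,d\pi$ for all $\pi\in\Pi(\mu,\nu)$, and verify equality on $\supp\gamma$. One small correction on the integrability step: $\psi_G$ and $\psi_G^*$ have no quadratic \emph{upper} bound in general (indeed $\psi_G$ may be $+\infty$), so the justification you sketch does not work; instead, for the optimal $G$ one has $\int\psi_G\,d\nu<\infty$ by~\eqref{eq:8}, and $\psi_G^*(Sx)=\tfrac12 S(x,x)$ on $G\supset\supp\mu$ by Theorem~\ref{th:12}, which is what is actually needed.
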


\begin{proof}
  Theorem~\ref{th:1} and Remark~\ref{rem:1} yield $G\in \mset{S}$ such
  that $Sx \in \partial \psi_G(y)$ as soon as
  $(x,y)\in \supp{\gamma}$.  By the Legendre-Fenchel inequality,
  \begin{displaymath}
    \psi_G(y) + \psi^*_G(Sx) \geq \ip{y}{Sx} = S(x,y), \quad
    x,y\in \real{d},  
  \end{displaymath}
  while by the properties~\eqref{eq:3} of subdifferentials of convex
  functions,
  \begin{displaymath}
    \psi_G(y) + \psi^*_G(Sx) = S(x,y), \quad
    (x,y)\in \supp{\gamma}.  
  \end{displaymath}
  Thus, for every $\pi\in \Pi(\mu,\nu)$,
  \begin{align*}
    \int S(x,y)d\pi
    & \leq \int
      \cbraces{\psi_G(y) + \psi^*_G(Sx)}d\pi = \int \psi_G(y)
      d\nu + \int
      \psi^*_G(Sx) d\mu
    \\  & = \int
          \cbraces{\psi_G(y) + \psi^*_G(Sx)}d\gamma 
          =  \int S(x,y) d\gamma, 
  \end{align*}
  as claimed.
\end{proof}

We conclude the section with easy to check sufficient conditions for
the uniqueness of the solution to the dual problem~\eqref{eq:6}. The
uniqueness of the optimal plan for the primal problem~\eqref{eq:4} is
studied in the follow-up paper~\cite{KramSirb:23} and relies on the
classification of the singularities of the projection $P_G$
in~\cite{KramSIrb:22b}.

\begin{Theorem}
  \label{th:5}
  Let $S\in \msym{d}{m}$ and $\nu \in \ps{2}{\real{d}}$.  If
  $\supp \nu$ is bounded \emph{or} convex, then the dual problem
  \eqref{eq:6} has a unique solution over $\supp \nu$ in the sense
  that
  \begin{displaymath}
    \psi _{\widetilde{G}}(y)=\psi _G(y), \ y \in \supp \nu, 
    \text{~and~} G\cap \supp{\nu} = \widetilde G \cap \supp{\nu},
  \end{displaymath}
  for any two optimal sets $G$ and $\widetilde{G}$.
  
  In particular, if $\supp \nu$ is convex and there exists an optimal
  $G$ contained in $\supp {\nu}$, then $G$ is the unique optimal
  set. This condition holds trivially if $\supp{\nu} = \real{d}$.
\end{Theorem}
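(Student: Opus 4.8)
The plan is to establish the two claims of Theorem~\ref{th:5} separately, and the key tool in both cases is Theorem~\ref{th:1}, which characterizes optimal sets $G$ via the condition that the support of an optimal plan $\gamma$ lies in the graph of $P_G$ (equivalently, $\psi_G(y) = S(x,y) - \tfrac12 S(x,x)$ on $\supp\gamma$). First I would fix an optimal plan $\gamma$ for~\eqref{eq:4}, which exists by Lemma~\ref{lem:4}, and two optimal sets $G, \widetilde G$. Applying Remark~\ref{rem:1}\ref{item:4} to both, for every $(x,y) \in \supp\gamma$ we get $\psi_G(y) = \psi_{\widetilde G}(y) = S(x,y) - \tfrac12 S(x,x)$, with $x \in G \cap \widetilde G$. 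So on the $y$-projection of $\supp\gamma$, the two Fitzpatrick functions agree. The difficulty is that the $y$-marginal of $\gamma$ is $\nu$, so the $y$-projection of $\supp\gamma$ is contained in $\supp\nu$ but need not equal it; I need to upgrade ``equality $\nu$-a.e.'' (which is immediate) to ``equality everywhere on $\supp\nu$,'' and this is exactly where the hypotheses on $\supp\nu$ enter.

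For the \textbf{bounded} case, I would argue that $\psi_G$ and $\psi_{\widetilde G}$ are both closed convex functions (Theorem~\ref{th:12}) that are finite and agree on a set of full $\nu$-measure, hence on a dense subset of $\supp\nu$; since $\supp\nu$ is bounded, its closed convex hull is compact, and a convex function finite on a bounded set is continuous on the relative interior, so the two functions, agreeing on a dense subset, agree throughout $\ri(\clconv\supp\nu) \supset$ the relevant points — more carefully, I would show they agree on all of $\supp\nu$ by taking, for any $y_0 \in \supp\nu$, a sequence $y_n \to y_0$ with $y_n$ in the ($\nu$-full-measure) agreement set, and using lower semicontinuity together with a matching upper bound from convexity on the bounded hull. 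For the \textbf{convex} case, $\supp\nu$ is closed and convex; the agreement set is dense in $\supp\nu$, both functions are convex and (being $\le \tfrac12 S(y,y)$ by Theorem~\ref{th:12}, hence locally bounded above) continuous on $\ri(\supp\nu)$, so they coincide on $\ri(\supp\nu)$ and then on $\supp\nu$ by lower semicontinuity and the standard fact that a closed convex function equals the closure of its restriction to the relative interior of its domain. This gives $\psi_G = \psi_{\widetilde G}$ on $\supp\nu$ in both cases.

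Next I would deduce the set equality $G \cap \supp\nu = \widetilde G \cap \supp\nu$. Here I use Theorem~\ref{th:12}: $x \in G \iff \psi_G(x) = \tfrac12 S(x,x)$. So for $x \in \supp\nu$ we have $x \in G \iff \psi_G(x) = \tfrac12 S(x,x) \iff \psi_{\widetilde G}(x) = \tfrac12 S(x,x) \iff x \in \widetilde G$, using the just-established equality $\psi_G = \psi_{\widetilde G}$ on $\supp\nu$. This yields the claimed equality. Finally, for the ``in particular'' statement: if $\supp\nu$ is convex and some optimal $G$ satisfies $G \subset \supp\nu$, then for any other optimal $\widetilde G$ we have $G = G \cap \supp\nu = \widetilde G \cap \supp\nu \subset \widetilde G$; since $G$ is maximal $S$-monotone and $\widetilde G$ is $S$-monotone, $G \subset \widetilde G$ forces $G = \widetilde G$. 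The case $\supp\nu = \real{d}$ is immediate because then any optimal $G$ is automatically contained in $\supp\nu$.

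The main obstacle I anticipate is the passage from $\nu$-almost-everywhere agreement of the two Fitzpatrick functions to pointwise agreement on all of $\supp\nu$: this is a genuine use of convexity and the structural bound $\psi_G \le \tfrac12 S(\cdot,\cdot)$, and the bounded and convex cases require slightly different convex-analytic arguments (compactness of the convex hull versus behavior on the relative interior of a closed convex domain). Everything after that — the characterization $x \in G \iff \psi_G(x) = \tfrac12 S(x,x)$ and the maximality argument — is routine given Theorem~\ref{th:1} and Theorem~\ref{th:12}.
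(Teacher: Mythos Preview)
Your overall architecture matches the paper's: fix an optimal plan $\gamma$, use Remark~\ref{rem:1}\ref{item:4} to get $\psi_G(y)=\psi_{\widetilde G}(y)$ on the $y$-projection $F$ of $\supp\gamma$, upgrade this to all of $\supp\nu$, then invoke the characterization $x\in G\iff \psi_G(x)=\tfrac12 S(x,x)$ from Theorem~\ref{th:12} and finish with maximality. The last two steps are exactly as in the paper.

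There are, however, two concrete problems in the ``upgrade'' step.

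First, the inequality you quote from Theorem~\ref{th:12} is backwards: one has $\psi_G(x)\geq \tfrac12 S(x,x)$, not $\leq$. So you cannot use it to get local boundedness from above. The correct source of finiteness (which the paper uses) is that on $F$ the Fitzpatrick functions equal the finite expression $S(x,y)-\tfrac12 S(x,x)$; convexity then gives finiteness on $\conv F$ and continuity on its relative interior. With this fix your convex case goes through essentially as in the paper: $\cl F=\supp\nu$ convex implies $\interior(\supp\nu)=\interior(\conv F)$, both functions are continuous there and agree on the dense set $F$, hence on $\interior(\supp\nu)$, hence on $\supp\nu$ by closedness.

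Second, your bounded case has a real gap. You try to run the same convex-analytic argument on $K=\clconv(\supp\nu)$, but $\supp\nu$ need not meet $\ri K$ at all (take $\supp\nu$ to be a sphere), so agreement on $\ri K$ tells you nothing about $\supp\nu$. Your fallback via lower semicontinuity only gives $\psi_G(y_0)\leq\liminf\psi_G(y_n)$ and the same for $\psi_{\widetilde G}$; there is no ``matching upper bound'' available without further input. The paper handles this case by a different, much shorter argument: boundedness of $\supp\nu$ together with the martingale property $x=\gamma(y\mid x)$ forces the $x$-projection of $\supp\gamma$ to be bounded as well, hence $\supp\gamma$ is compact, hence its $y$-projection $F$ is closed, hence $F=\cl F=\supp\nu$. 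No convex analysis is needed in the bounded case.
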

\begin{proof}
  Consider two optimal sets $G$ and $\widetilde{G}$ for \eqref{eq:6}
  and let $\gamma$ be an optimal plan for~\eqref{eq:4}.
  Item~\ref{item:4} in Remark~\ref{rem:1} shows that
  \begin{displaymath}
    \psi_G(y) = S(x,y) - \frac12 S(x,x) = \psi_{\widetilde G}(y), \quad
    (x,y) \in \supp{\gamma}. 
  \end{displaymath}
  Denote by $F$ the projection of $\supp{\gamma}$ on the
  $y$-coordinate.  While, in the general case, $F$ may not be closed,
  we know that $\cl F=\supp \nu$.  From the relation above, it follows
  that the Fitzpatrick functions $\psi_G$ and $\psi_{\widetilde G}$
  coincide and are finite over $F$.
   
  If $\supp \nu$ is bounded, the martingale property implies that the
  $x$-projection of $\supp \gamma$ is bounded as well. A simple
  compactness argument shows that $F$ is closed, so $\supp \nu =F$.
   
  If $C\set \supp \nu$ is convex, we have $F\subset C=\cl F$. By
  restricting to the affine space generated by $C$, we can assume that
  $C$ has nonempty interior.  From $C=\cl F$ we obtain that
  $\interior C=\interior \conv F$.  The closed convex functions
  $\psi _G$ and $\psi _{\widetilde{G}}$ are finite on $F$, so they are
  finite on $\conv F$ and finite and continuous on $\interior C$.  We
  know that $\psi_G$ and $\psi _{\widetilde{G}}$ coincide on $F$.
  Using $\cl F=C$, we obtain that they coincide on $\interior C$.
  Finally, the equality of the closed convex functions $\psi _G$ and
  $\psi _{\widetilde{G}}$ extends to $\cl \interior C=C$.

  In both situations, when either $\supp \nu$ is bounded or
  $\supp \nu$ is convex, it was proved that
  $\psi _G=\psi _{\widetilde{G}}$ over $\supp \nu$.  Now, by the
  properties of Fitzpatrick functions from Theorem~\ref{th:12},
  \begin{align*}
    G \cap \supp \nu  = \descr{x\in
    \supp \nu}{\psi_G(x) = \psi_{\widetilde{G}}(x) = \frac12 S(x,x)} = 
    \widetilde{G} \cap \supp \nu. 
  \end{align*}
  If $\supp \nu$ is convex and there exists an optimal set
  $G \subset \supp{\nu}$, then
  $G = \widetilde{G} \cap \supp{\nu} \subset \widetilde{G}$ for any
  other optimal set $\widetilde{G}$. The maximality of $G$ yields that
  $G=\widetilde{G}$.
\end{proof}

\section{Linear solutions}
\label{sec:linear}

The main result of this section, Theorem~\ref{th:9}, states the
necessary and sufficient conditions for a solution to the dual
problem~\eqref{eq:6} to be an affine subset of $\real{d}$. In this
case, an optimal plan for~\eqref{eq:4} is unique and has an explicit
linear form.

We start with some results from linear algebra. By $I$ we denote the
identity matrix and by $V^T$ the transpose of a matrix $V$.  Let $A$
be a symmetric $d\times d$ matrix. We write $A>0$ and say that $A$ is
\emph{positive definite} if $\ip{x}{Ax} > 0$ for every
$x\in \real{d}\setminus 0$.  We write $A\geq 0$ and say that $A$ is
\emph{positive semi-definite} if $\ip{x}{Ax} \geq 0$ for every
$x\in \real{d}$.  By $A\leq 0$ we mean that $-A\geq 0$. A similar
convention holds for $A<0$. Notice that $A>0$ if and only if
$A\in \msym{d}{d}$ and $A<0$ if and only if $A\in \msym{d}{0}$.

\begin{Theorem}
  \label{th:6}
  Let $S\in \msym{d}{m}$ and $\Sigma$ be a symmetric positive definite
  $d\times d$ matrix. There exist unique symmetric positive
  semi-definite $d\times d$ matrices $Q$ and $R$ such that
  \begin{equation}
    \label{eq:16}
    \Sigma =Q+R, \quad QSQ \geq 0, \quad RSR \leq 0, \quad QSR =
    0. 
  \end{equation}
  In this case,
  \begin{gather*}
    \rank{Q} = m, \quad \rank{R} = d-m, \quad QSQ-RSR>0, \\
    Q\Sigma^{-1}Q = Q, \quad R\Sigma^{-1}R = R, \quad Q\Sigma^{-1}R =
    0.
  \end{gather*}
\end{Theorem}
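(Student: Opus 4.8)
The plan is to diagonalize $S$ and $\Sigma$ simultaneously and read off $Q$, $R$ from the resulting eigenspaces. First I would use the standard simultaneous reduction of a pair of symmetric forms, one of which ($\Sigma$) is positive definite: there is an invertible matrix $T$ such that $T^T\Sigma T = I$ and $T^T S T = D$, where $D = \diag{\lambda_1,\dots,\lambda_d}$ is the diagonal matrix of the (real) roots of $\det(S - \lambda\Sigma^{-1})=0$, counted with multiplicity. Since $S \in \msym{d}{m}$ has full rank with exactly $m$ positive eigenvalues, Sylvester-type invariance gives that exactly $m$ of the $\lambda_i$ are positive and $d-m$ are negative, none zero. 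Order them so that $\lambda_1,\dots,\lambda_m>0>\lambda_{m+1},\dots,\lambda_d$. Let $P_+$ and $P_-$ be the coordinate projections in the $T$-basis onto the first $m$ and last $d-m$ coordinates, so $P_+ + P_- = I$, $P_+P_- = 0$, $P_\pm^2 = P_\pm = P_\pm^T$, and $P_+ D P_- = 0$ with $P_+DP_+\ge 0$, $P_-DP_-\le 0$.

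The candidate matrices are then $Q \set T P_+ T^T$ and $R \set T P_- T^T$. I would verify \eqref{eq:16} by direct substitution using $T^T\Sigma T = I$ (equivalently $\Sigma = (T^{-1})^T T^{-1}$, $\Sigma^{-1} = TT^T$) and $S = (T^{-1})^T D\, T^{-1}$: symmetry and positive semi-definiteness of $Q,R$ are clear; $Q+R = T(P_++P_-)T^T = TT^T = \Sigma^{-1}$ — wait, this forces me instead to set $Q \set (T^{-1})^T P_+ T^{-1}$ and $R \set (T^{-1})^T P_- T^{-1}$, so that $Q+R = (T^{-1})^T T^{-1} = \Sigma$. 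With this choice, $QSQ = (T^{-1})^T P_+ T^{-1} (T^{-1})^T D T^{-1} (T^{-1})^T P_+ T^{-1}$; using $T^{-1}(T^{-1})^T = (T^T T)^{-1}$... here I should instead normalize so $T^{-1}(T^{-1})^T = I$, i.e. work with $T$ such that $\Sigma = TT^T$ and $T^{-1} S (T^{-1})^T = D$. Then $Q = TP_+T^T$, $R = TP_-T^T$ give $Q+R=\Sigma$, and $QSQ = TP_+T^T\,(T^{-1})^T D T^{-1}\,TP_+T^T = T P_+ D P_+ T^T \ge 0$; similarly $RSR = TP_-DP_-T^T\le 0$ and $QSR = TP_+DP_-T^T = 0$. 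The auxiliary identities follow the same way: $\rank Q = \rank P_+ = m$, $\rank R = d-m$, $QSQ - RSR = T(P_+DP_+ - P_-DP_-)T^T = T|D|T^T > 0$ since all $\lambda_i\ne 0$, and $Q\Sigma^{-1}Q = TP_+T^T (T^{-1})^T T^{-1} TP_+T^T = TP_+^2T^T = Q$, likewise for $R$, and $Q\Sigma^{-1}R = TP_+P_-T^T = 0$.

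The main obstacle is \textbf{uniqueness}, which I would argue as follows. Suppose $Q,R$ satisfy \eqref{eq:16}. Conjugating by $T^{-1}$, set $\widetilde Q \set T^{-1}Q(T^{-1})^T$, $\widetilde R \set T^{-1}R(T^{-1})^T$; these are symmetric positive semi-definite with $\widetilde Q + \widetilde R = T^{-1}\Sigma(T^{-1})^T = I$, and $\widetilde Q D\widetilde Q\ge 0$, $\widetilde R D\widetilde R\le 0$, $\widetilde Q D\widetilde R = 0$. Since $\widetilde Q + \widetilde R = I$ with both summands positive semi-definite, $\widetilde Q$ and $\widetilde R$ commute and are simultaneously diagonalizable with eigenvalues in $[0,1]$; moreover $\widetilde R = I - \widetilde Q$. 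The condition $\widetilde Q D\widetilde R = \widetilde Q D(I-\widetilde Q) = 0$ means $\widetilde Q D = \widetilde Q D\widetilde Q$ is symmetric, so $\widetilde Q$ commutes with $D$; hence $\widetilde Q$ is block-diagonal with respect to the eigenspaces of $D$. Restricting to each eigenvalue $\lambda$ of $D$, the block $\widetilde Q_\lambda$ is symmetric positive semi-definite with $\lambda\widetilde Q_\lambda^2\ge 0$ and $-\lambda(I-\widetilde Q_\lambda)^2 \le 0$ forcing, when $\lambda>0$: $\lambda\widetilde Q_\lambda^2\ge0$ is automatic, but $\widetilde R D\widetilde R \le 0$ gives $-\lambda(I-\widetilde Q_\lambda)^2\le 0$, automatic too — so I need the remaining constraints. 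The decisive point: on the $\lambda>0$ block, $\widetilde R_\lambda D\widetilde R_\lambda = \lambda\widetilde R_\lambda^2 \le 0$ with $\lambda>0$ forces $\widetilde R_\lambda^2 = 0$, hence $\widetilde R_\lambda = 0$ (symmetric), i.e. $\widetilde Q_\lambda = I$; symmetrically, on the $\lambda<0$ block, $\widetilde Q_\lambda D\widetilde Q_\lambda = \lambda\widetilde Q_\lambda^2\ge0$ with $\lambda<0$ forces $\widetilde Q_\lambda = 0$. Therefore $\widetilde Q = P_+$ is forced, so $Q = TP_+T^T$ and $R = \Sigma - Q$ are unique. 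I would double-check that the signs in "$QSQ\ge0$" versus "$QSQ-RSR>0$" are being used consistently (the latter uses that no $\lambda_i$ vanishes, equivalently $\rank S = d$), and that the rank count $\rank Q = m$ is exactly the index of $S$.
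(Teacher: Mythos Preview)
Your approach is essentially the paper's: simultaneously diagonalize so that $\Sigma = VV^T$ and $V^TSV = \Lambda$ is diagonal, set $Q = VP_+V^T$, $R = VP_-V^T$, and for uniqueness reduce to the normalized problem $A+B=I$, $A\Lambda B = 0$, $A\Lambda A\ge 0$, $B\Lambda B\le 0$; the paper then argues entry-by-entry from $\lambda_iA_{ij}=\lambda_jA_{ji}$, while you phrase the same step as ``$\widetilde Q$ commutes with $D$, hence is block-diagonal on the eigenspaces of $D$, and the sign constraints force each block to be $0$ or $I$''---these are the same argument. One slip to fix: your stated normalization $T^{-1}S(T^{-1})^T=D$ is inconsistent with $\Sigma=TT^T$ and with your own computation $QSQ = TP_+T^T\,(T^{-1})^TDT^{-1}\,TP_+T^T$; the correct condition is $T^TST=D$ (equivalently $S=(T^{-1})^TDT^{-1}$), which is exactly the paper's $V^TSV=\Lambda$ with $T=V$.
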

\begin{proof}
  The classical diagonalization theorem of linear algebra,
  \cite[Theorem~9, p.~314]{Gant-1:98}, yields a $d\times d$ matrix $V$
  of full rank such that
  \begin{displaymath}
    V^T S V = \Lambda, \quad V^T \Sigma^{-1} V = I, 
  \end{displaymath}
  where $\Lambda$ is a diagonal matrix:
  $\Lambda_{ij} = \delta_{ij} \lambda_i$, whose first $m$ diagonal
  elements are strictly positive and the remaining $d-m$ diagonal
  elements are strictly negative.

  Observe now that the symmetric positive semi-definite matrices $Q$
  and $R$ satisfy~\eqref{eq:16} if and only if
  \begin{displaymath}
    Q = VAV^T, \quad R = VBV^T, 
  \end{displaymath}
  where $A$ and $B$ are positive semi-definite matrices such that
  \begin{displaymath}
    I = A + B, \quad A\Lambda A \geq 0, \quad B \Lambda B
    \leq 0, \quad A \Lambda B = 0.   
  \end{displaymath}
  We claim that the only such $A$ and $B$ are diagonal and given by
  \begin{displaymath}
    A_{ij} = \delta_{ij} \ind{i\leq m}, \quad B_{ij} = \delta_{ij}
    \ind{i>m}, \quad i,j\in \braces{1,\dots,d}. 
  \end{displaymath}
  Checking that the above matrices $A$ and $B$ are solutions is
  immediate.  The uniqueness is verified as follows.

  From $A\Lambda B=0$ and $A+B=I$ we obtain that
  $\Lambda A =A\Lambda A$ and $\Lambda B=B\Lambda B$. In particular,
  the matrix $\Lambda A$ is symmetric:
  \begin{displaymath}
    \lambda_i A_{ij} = \lambda_j A_{ji}
    \quad  i,j\in \braces{1,\dots,d}.
  \end{displaymath}
  Because $A$ is symmetric and $\lambda_i>0$ for $i\leq m$ and
  $\lambda_i<0$ for $i>m$, we deduce that
  \begin{displaymath}
    A_{ij} = A_{ji} = 0, \quad i\leq m< j.
  \end{displaymath}
  Since $\Lambda A = A\Lambda A\geq 0$, we have that
  \begin{displaymath}
    0\leq \sum_{ij} \lambda_i A_{ij} x_ix_j
    = \sum_{i} \lambda_i A_{ii} x_i^2 +  \sum_{i<j} (\lambda_i +
    \lambda_j) A_{ij} x_ix_j,  \quad x\in \real{d}.  
  \end{displaymath}
  Recalling that $A\geq 0$, taking distinct
  $k,l\in \braces{m+1,\dots,d}$, and choosing $x_i = \delta_{ik}$ and
  $x_i = \delta_{ik} + \delta_{il}$ we obtain that
  \begin{displaymath}
    A_{kl} =0, \quad k,l>m.
  \end{displaymath}
  Thus the matrix $A$ has possible nonzero entries $A_{ij}$ only for
  $i,j\leq m$.  Using $\Lambda B=B\Lambda B\leq 0$ and $B\geq 0$ we go
  over identical arguments to obtain that the matrix $B$ has possible
  nonzero entries $B_{ij}$ only for $i,j> m$.  The uniqueness follows
  now from $A+B=I$.

  We have shown that symmetric positive semi-definite matrices $Q$ and
  $R$ satisfying \eqref{eq:16} exist and are unique.  The remaining
  properties of $Q$ and $R$ follow from their counterparts for $A$ and
  $B$:
  \begin{gather*}
    \rank{A}= m, \quad \rank{B} = d-m, \quad A\Lambda A - B\Lambda B >
    0, \\
    AIA =A, \quad BIB = B, \quad AIB = 0.
  \end{gather*}
\end{proof}

We recall that a $d\times d$ matrix $P$ is called \emph{idempotent} if
$P^2 = P$.

\begin{Theorem}
  \label{th:7}
  Let $S\in \msym{d}{m}$, $\Sigma$ be a symmetric positive definite
  $d\times d$ matrix, and $Q$ and $R$ be the symmetric positive
  semi-definite matrices satisfying~\eqref{eq:16}. Then
  $P \set Q\Sigma^{-1}$ is the unique idempotent $d\times d$ matrix
  such that
  \begin{equation}
    \label{eq:17}
    \begin{split}
      \text{the matrices~} P\Sigma \text{~and~}& SP \text{~are symmetric,} \\
      P\Sigma \geq 0, \quad (I-P)\Sigma \geq 0, \quad & SP \geq 0,
      \quad S(I-P) \leq 0.
    \end{split}
  \end{equation}
  In this case,
  \begin{displaymath}
    SP - S(I-P) = S(2P-I) > 0. 
  \end{displaymath}
\end{Theorem}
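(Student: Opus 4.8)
The plan is to prove existence and uniqueness separately, and in both parts the workhorse is the substitution $x = \Sigma y$: since $\Sigma$ is invertible and symmetric, an inequality of the form $M\Sigma^{-1} \geq 0$ is equivalent to $\Sigma M \geq 0$, which lets me trade the matrix $P$ appearing in~\eqref{eq:17} for the already-understood matrices $Q,R$ of Theorem~\ref{th:6}. Before starting I would record two families of identities that follow immediately from $\Sigma = Q+R$ together with $QSR = RSQ = 0$ (the latter being the transpose of the former): namely $\Sigma SQ = QSQ = QS\Sigma$ and $\Sigma SR = RSR = RS\Sigma$.

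\textbf{Existence.} With $P := Q\Sigma^{-1}$, idempotency is immediate from $Q\Sigma^{-1}Q = Q$ (Theorem~\ref{th:6}): $P^2 = Q\Sigma^{-1}Q\Sigma^{-1} = Q\Sigma^{-1} = P$. Next $P\Sigma = Q$ and $(I-P)\Sigma = \Sigma - Q = R$, both symmetric and positive semi-definite by the choice of $Q,R$. For the two statements about $SP$: the identity $\Sigma SQ = QS\Sigma$ gives $SQ\Sigma^{-1} = \Sigma^{-1}QS$, i.e.\ $SP = (SP)^T$, so $SP$ is symmetric; and $x^TSPx = y^T\Sigma SQ\,y = y^TQSQ\,y \geq 0$ for $x = \Sigma y$, so $SP\geq 0$. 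Since $S(I-P) = S - SP = S(\Sigma-Q)\Sigma^{-1} = SR\Sigma^{-1}$, the same computation with $R$ in place of $Q$ shows $S(I-P)$ is symmetric and $x^TS(I-P)x = y^TRSR\,y \leq 0$. Finally $S(2P-I) = S(2Q-\Sigma)\Sigma^{-1} = S(Q-R)\Sigma^{-1}$, so $x^TS(2P-I)x = y^T(QSQ-RSR)y > 0$ by the strict inequality $QSQ-RSR>0$ of Theorem~\ref{th:6}; this gives the last assertion.

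\textbf{Uniqueness.} Let $P$ be any idempotent matrix satisfying~\eqref{eq:17} and put $\widetilde Q := P\Sigma$, $\widetilde R := (I-P)\Sigma$. Then $\widetilde Q + \widetilde R = \Sigma$, and $\widetilde Q, \widetilde R$ are symmetric and positive semi-definite by~\eqref{eq:17}; I claim they also satisfy the three product conditions of~\eqref{eq:16}, so that the uniqueness clause of Theorem~\ref{th:6} forces $\widetilde Q = Q$, hence $P = Q\Sigma^{-1}$. The key observations are that symmetry of $P\Sigma$ means $\Sigma P^T = P\Sigma$ and symmetry of $SP$ means $P^TS = SP$ (note $P$ itself need not be symmetric). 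Then $\widetilde Q S\widetilde R = P\Sigma\,S(I-P)\,\Sigma = P\Sigma(I-P^T)S\Sigma = P(I-P)\Sigma S\Sigma = 0$, since $\Sigma(I-P^T) = (I-P)\Sigma$ and $P^2 = P$. For $\widetilde Q S\widetilde Q\geq 0$: every $u \in \range P$ satisfies $u^TSu = u^TSPu \geq 0$ (write $u = Pv$ and use $P^TS = SP$, $P^2 = P$), and $x^T\widetilde Q S\widetilde Q x = (P\Sigma x)^TS(P\Sigma x)$ with $P\Sigma x \in \range P$; the symmetric argument with $I-P$ (also idempotent, with $S(I-P)\leq 0$) gives $\widetilde R S\widetilde R \leq 0$.

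\textbf{Main obstacle.} Nothing here is deep; the one place demanding care is the transpose bookkeeping in the uniqueness step — one must use the symmetry of the \emph{products} $P\Sigma$ and $SP$, not of $P$, at precisely the right moments so that $\widetilde Q S\widetilde R$ collapses to $0$ and so that the sign of $u^TSu$ on $\range P$ (respectively $\range(I-P)$) is read off from $SP\geq 0$ (respectively $S(I-P)\leq 0$). Once the identities $\Sigma SQ = QSQ = QS\Sigma$ and $\Sigma SR = RSR = RS\Sigma$ are in hand, the existence part is a routine translation through Theorem~\ref{th:6}.
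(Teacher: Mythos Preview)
Your proof is correct and follows essentially the same route as the paper: both directions pass between~\eqref{eq:17} and~\eqref{eq:16} via the substitution $Q=P\Sigma$, $R=(I-P)\Sigma$ and invoke the existence/uniqueness of Theorem~\ref{th:6}. The paper packages the existence step slightly more compactly by writing $SP = P^TSP = \Sigma^{-1}QSQ\Sigma^{-1}$ (conjugation by $\Sigma^{-1}$) rather than your change of variable $x=\Sigma y$, but these are the same computation; your explicit verification of $\widetilde QS\widetilde R=0$ in the uniqueness step spells out what the paper leaves as ``the relations in~\eqref{eq:18} applied in the reverse order.''
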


\begin{proof}
  We have that $P\Sigma = Q \geq 0$, $(I-P)\Sigma = R\geq 0$, and
  \begin{equation}
    \label{eq:18}
    \begin{split}
      (I-P)^T SP &= \Sigma^{-1} RSQ \Sigma^{-1} = 0,\\
      SP & = P^TSP = \Sigma^{-1} QSQ \Sigma^{-1} \geq 0, \\
      S(I-P) & = (I-P)^TS(I-P) = \Sigma^{-1} RSR \Sigma^{-1} \leq 0.
    \end{split}
  \end{equation}
  By Theorem~\ref{th:6}, $Q\Sigma^{-1}Q = Q$, so
  \begin{displaymath}
    P^2 = Q\Sigma ^{-1}Q\Sigma ^{-1}= Q\Sigma ^{-1}=P.
  \end{displaymath}
  Thus, $P$ is an idempotent matrix solving~\eqref{eq:17}. Moreover,
  by Theorem~\ref{th:6},
  \begin{displaymath}
    SP - S(I-P)  =  \Sigma^{-1} (QSQ - RSR)
    \Sigma^{-1} >0.  
  \end{displaymath}

  Conversely, if $P'$ is an idempotent matrix solving~\eqref{eq:17},
  then the relations in~\eqref{eq:18} (applied in the reverse order)
  show that $Q' \set P'\Sigma$ and $R'\set (I-P')\Sigma$ are symmetric
  positive semi-definite matrices satisfying~\eqref{eq:16}. The
  uniqueness part of Theorem~\ref{th:6} yields that $Q'=Q$. Hence,
  $P=P'$.
\end{proof}

We conclude the introductory part of the section with a description of
all affine maximal \emph{strictly} (in the sense of~\eqref{eq:19})
$S$-monotone sets.

\begin{Theorem}
  \label{th:8}
  Let $S\in \msym{d}{m}$. The following conditions are equivalent:
  \begin{enumerate}[label = {\rm (\alph{*})}, ref={\rm (\alph{*})}]
  \item \label{item:8} $G$ is an affine subspace of $\real{d}$,
    $G \in \mset{S}$, and
    \begin{equation}
      \label{eq:19}
      S(x-y,x-y) >0, \quad x,y \in G, \; x\not = y. 
    \end{equation}
  \item \label{item:9} $G = \descr{x_0 + P(x-x_0)}{x\in \real{d}}$,
    where $x_0 \in \real{d}$ and $P$ is a $d\times d$ idempotent
    matrix such that the matrix $SP$ is symmetric and
    \begin{equation}
      \label{eq:20}
      SP\geq 0, \quad S(I-P)\leq 0, \quad S(2P-I) > 0. 
    \end{equation}
  \end{enumerate}
  The projection on $G$ in the $S$-space has the form:
  \begin{displaymath}
    P_G(y) \set \argmin_{x\in G} S(y-x,y-x) = x_0 + P(y-x_0), \quad
    y\in \real{d}.  
  \end{displaymath}
\end{Theorem}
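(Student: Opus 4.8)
The plan is to prove Theorem~\ref{th:8} by establishing the two implications separately and then verifying the projection formula, leaning heavily on Theorem~\ref{th:6} and Theorem~\ref{th:7} for the linear-algebraic content.

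\textbf{From~\ref{item:9} to~\ref{item:8}.} Suppose $G = \{x_0 + P(x-x_0) : x \in \real{d}\}$ with $P$ idempotent, $SP$ symmetric, and the inequalities~\eqref{eq:20}. Since $P$ is idempotent, $G = x_0 + \range{P}$ is an affine subspace and every difference of points of $G$ lies in $\range{P}$, i.e. has the form $Pw$. Then $S(Pw,Pw) = \ip{Pw}{SPw} = \ip{w}{P^TSPw}$; using $P^2=P$ and the symmetry of $SP$ one checks $P^TSP = SP^2 = SP$, so $S(Pw,Pw) = \ip{w}{SPw}$. Because $SP\geq 0$ this is $\geq 0$, giving $S$-monotonicity of $G$; and if $Pw \neq 0$, I need this to be strictly positive. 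Here I would use $S(2P-I)>0$: writing $S = SP + S(I-P)$ and noting $S(I-P)\leq 0$ on all of $\real{d}$ while $SP\geq 0$, the condition $S(2P-I) = SP - S(I-P) > 0$ forces $SP$ to be strictly positive on $\range{P}$ (if $\ip{w}{SPw}=0$ for some $Pw\neq 0$, then since $w\mapsto \ip{w}{SPw}$ is a positive semidefinite form vanishing at $w$, one gets $SPw=0$, hence $S(2P-I)w = -S(I-P)w = -Sw+SPw = -Sw$; but also $\ip{w}{S(2P-I)w} = \ip{w}{SPw} - \ip{w}{S(I-P)w} = -\ip{w}{S(I-P)w}$, and combining with $\ip{Pw}{S(2P-I)Pw}>0$ and the decomposition of $w$ into $\range P$ and $\range(I-P)$ parts yields the needed strict inequality). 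This gives~\eqref{eq:19}. Maximality: $G$ is an affine subspace on which the scalar square is positive definite, so $G$ is $S$-monotone; if $G$ were contained in a strictly larger $S$-monotone set $G'$, pick $y \in G'\setminus G$ and decompose $y - x_0 = Pv + (I-P)v$ with $(I-P)v\neq 0$; then for $x\in G$ the difference $y-x$ has a nonzero $(I-P)$-component, and testing $S(y-x,y-x)$ for suitable $x$ along $G$ (completing a square) produces a negative value since $S(I-P)<0$ on $\range(I-P)$ — contradiction. So $G\in\mset{S}$.

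\textbf{From~\ref{item:8} to~\ref{item:9}.} Suppose $G$ is an affine subspace, $G\in\mset{S}$, and~\eqref{eq:19} holds. Write $G = x_0 + L$ for a linear subspace $L$ of dimension $k$. On $L$ the form $S(\cdot,\cdot)$ restricts to a positive definite quadratic form (by~\eqref{eq:19}), so $L$ is a positive subspace of the $S$-space; hence $k\leq m$. Maximality forces $k=m$: if $k<m$ one can enlarge $L$ inside a maximal positive subspace of $\real{d}_m$ (Witt-type argument: a positive subspace of dimension $<m$ is not maximal positive), contradicting maximality of $G$. Now I want the idempotent $P$ with $\range P = L$. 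Consider $\Sigma = I$ (or any positive definite matrix) — actually, to invoke Theorem~\ref{th:6}/\ref{th:7} I should instead produce $P$ directly: for $L$ a maximal positive subspace, the $S$-orthogonal complement $L^{\perp_S} = \{v : S(v,w)=0,\ w\in L\}$ has dimension $d-m$ and, because $L$ is maximal positive, one shows $L \cap L^{\perp_S} = \{0\}$ (any vector in the intersection would have zero scalar square and be $S$-orthogonal to $L$, contradicting nondegeneracy/maximality), hence $\real{d} = L \oplus L^{\perp_S}$; let $P$ be the projection onto $L$ along $L^{\perp_S}$. Then $P^2=P$, $\range P = L$. Symmetry of $SP$: for $u,v\in\real{d}$, $\ip{u}{SPv} = S(u, Pv)$; decomposing $u = Pu + (I-P)u$ with $(I-P)u \in L^{\perp_S}$ and $Pv\in L$, the cross term vanishes, so $S(u,Pv) = S(Pu,Pv)$, which is symmetric in $u,v$. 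The inequalities in~\eqref{eq:20}: $SP \geq 0$ because $\ip{u}{SPu} = S(Pu,Pu)\geq 0$ by $S$-monotonicity of $G$ (translating~\eqref{eq:19} to $L$); $S(I-P)\leq 0$ because $S((I-P)u,(I-P)u)\leq 0$ since $L^{\perp_S}$ is a negative subspace (it has dimension $d-m$ and is $S$-orthogonal to a maximal positive subspace, so contains no positive vector); and $S(2P-I) = SP - S(I-P) > 0$ since it is the sum of the positive semidefinite form on $L$ and the negative-of-negative-semidefinite form on $L^{\perp_S}$, which is positive definite because the decomposition is direct and each piece is definite on its own subspace.

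\textbf{The projection formula.} Given $y\in\real{d}$ and $x = x_0 + P(x-x_0)\in G$, write $x - x_0 = Pw$. Then $y - x = (y - x_0) - Pw$. Decompose $y - x_0 = P(y-x_0) + (I-P)(y-x_0)$ and note $(I-P)(y-x_0)\in L^{\perp_S}$ is $S$-orthogonal to $L\ni P(y-x_0) - Pw$, so $S(y-x,y-x) = S((I-P)(y-x_0),(I-P)(y-x_0)) + S(P(y-x_0)-Pw, P(y-x_0)-Pw)$. The first term is independent of $x$; the second is a positive definite form in the $L$-variable minimized (value $0$) exactly when $Pw = P(y-x_0)$, i.e. $x = x_0 + P(y-x_0)$. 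Hence $P_G(y) = x_0 + P(y-x_0)$.

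\textbf{Main obstacle.} The genuinely delicate point is the equivalence between maximality of the affine $S$-monotone set and maximality of the underlying positive subspace, together with the clean direct-sum decomposition $\real{d} = L \oplus L^{\perp_S}$ and the identification of $L^{\perp_S}$ as a negative subspace. This is a Witt/Sylvester-type statement about pseudo-Euclidean spaces; making it precise (in particular ruling out isotropic vectors in $L$ or in the intersection, and getting $k = m$ rather than merely $k \leq m$) is where the real work lies. Everything else — the algebraic identities $P^TSP = SP$, symmetry of $SP$, and the completing-the-square computation for the projection — is routine once the decomposition is in hand. As an alternative to the Witt argument, one could fix any symmetric positive definite $\Sigma$, observe that $G$ being a maximal affine $S$-monotone set pins down $\range P$ to be a maximal $\Sigma$-and-$S$-simultaneously-diagonalizable positive eigenspace, and then quote Theorem~\ref{th:7} directly for existence and uniqueness of such an idempotent $P$; but the self-contained Witt argument seems cleaner here since no $\Sigma$ is part of the hypotheses.
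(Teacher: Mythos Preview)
Your approach is correct in outline but genuinely different from the paper's. For \ref{item:8}$\Rightarrow$\ref{item:9}, the paper does \emph{not} invoke any Witt-type structure theory; instead it defines $P$ directly via the $S$-projection: it observes that $x\mapsto S(y-x,y-x)$ is a strictly convex quadratic on the linear subspace $G$ (by~\eqref{eq:19}), hence has a unique minimizer $P_G(y)$, checks that $P_G$ is linear and idempotent, and reads off $(I-P)^TSP=0$ (hence $SP$ symmetric) from the first-order condition $S(y-P_G(y),x)=0$, $x\in G$. Maximality then gives $S(y-Py,y-Py)<0$ for $y\notin G$, yielding $S(I-P)\leq 0$, and~\eqref{eq:19} gives $S(2P-I)>0$. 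The projection formula is thus the \emph{starting point} rather than a separate verification. Your route---build the decomposition $\real{d}=L\oplus L^{\perp_S}$ first and let $P$ be the associated projector---is more structural and makes the signature $(m,d-m)$ explicit, but requires you to import or reprove facts about maximal positive subspaces, whereas the paper's argument is entirely self-contained. Incidentally, you never actually use Theorems~\ref{th:6} or~\ref{th:7}, despite the opening sentence.

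Two small points. First, in \ref{item:9}$\Rightarrow$\ref{item:8} your derivation of strict positivity on $\range P$ is correct but needlessly roundabout: since $(2P-I)v=v$ for $v\in\range P$, one has immediately $S(v,v)=\ip{v}{S(2P-I)v}>0$ for $v\neq 0$; this is what the paper does. Second, in \ref{item:8}$\Rightarrow$\ref{item:9} you need $L^{\perp_S}$ to be \emph{strictly} negative (no isotropic vectors) to get $S(2P-I)>0$, but you only argue ``contains no positive vector,'' which gives $\leq 0$. The fix is easy and uses maximality again: if $v\in L^{\perp_S}\setminus\{0\}$ had $S(v,v)=0$, then for every $w\in L$ one computes $S(v-w,v-w)=S(w,w)\geq 0$, so $G\cup\{x_0+v\}$ would be $S$-monotone, contradicting $G\in\mset{S}$. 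Your ``main obstacle'' paragraph mentions isotropic vectors in $L$ and in the intersection, but the real issue is isotropic vectors in $L^{\perp_S}$.
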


\begin{proof}
  Without restriction of generality we can assume that $G$ is a linear
  subspace, that is, it contains $0$. We then take $x_0 = 0$.

  \ref{item:8} $\implies$ \ref{item:9}: Standard arguments show that
  for every $y\in \real{d}$ the function $x\to S(y-x,y-x)$ attains the
  minimum on $G$ at a unique $P_G(y)$ such that
  \begin{equation}
    \label{eq:21}
    S(y-P_G(y),x) = 0, \quad x\in G. 
  \end{equation}
  We first observe that $P_G(y)=y$ for $y\in G$.  Then, since $G$ is a
  linear subspace, we deduce that $P_G$ is a linear function mapping
  $\real{d}$ onto $G$:
  \begin{displaymath}
    P_G(y) = Py, \quad y\in \real{d},
  \end{displaymath}
  for a unique $d\times d$-matrix $P$.  As
  \begin{displaymath}
    P^2 y = P_G(P_G(y)) = P_G(y) = Py, \quad y\in \real{d}, 
  \end{displaymath}
  $P$ is an idempotent matrix. Since
  \begin{displaymath}
    G = \range{P_G} \set \descr{P_G(y)}{y\in \real{d}} =
    \descr{Py}{y\in \real{d}}, 
  \end{displaymath}
  we can write~\eqref{eq:21} as
  \begin{displaymath}
    S(y-Py, Px)  = \ip{(I-P)y}{SPx}= 0, \quad x,y\in \real{d}, 
  \end{displaymath}
  or, equivalently, as
  \begin{displaymath}
    0 = (I-P)^T S P = SP - P^TSP. 
  \end{displaymath}
  In particular, $SP$ is a symmetric matrix. As $\range P=G$, we
  deduce from~\eqref{eq:19} that $SP = P^T S P \geq 0$.  Since
  $G\in \mset{S}$, we obtain that
  \begin{displaymath}
    S(y-Py, y-Py) = \min_{x\in G} S(y-x,y-x) < 0, \quad y\in
    \real{d} \setminus G.  
  \end{displaymath}
  As $y = Py$, $y\in G$, and $SP = P^TS=P^TSP$, we deduce that
  \begin{displaymath}
    0 \geq (I-P)^T S (I-P) = S(I-P). 
  \end{displaymath}
  Accounting for~\eqref{eq:19} we also obtain that
  \begin{displaymath}
    0 < P^T S P - (I-P)^T S (I-P) = SP - S(I-P) = S(2P-I).  
  \end{displaymath}

  \ref{item:9} $\implies$ \ref{item:8}: Recall that $x_0 = 0$ and
  thus,
  \begin{displaymath}
    G \set \range{P} = \descr{Py}{y\in \real{d}}.
  \end{displaymath}
  To finish the proof of the theorem, we have to show that
  $G \in \mset{S}$ and that~\eqref{eq:19} holds.  If $x\in G$, then
  $x = (2P-I)x$. Since the matrix $U \set S(2P-I)$ is positive
  definite, we obtain that
  \begin{displaymath}
    S(x,x) = S(x,(2P-I)x) = \ip{x}{Ux} > 0, \quad x\in
    G\setminus 0.   
  \end{displaymath}
  Hence, $G$ is an $S$-monotone set satisfying~\eqref{eq:19}.
  
  As $(2P-I)(I-P)= -(I-P)$, we obtain that
  \begin{displaymath}
    (I-P)^TU(I-P) = - (I-P)^TS(I-P). 
  \end{displaymath}
  If $y\not\in G$, then $z\set y - Py = (I - P)y \not=0$.  It follows
  that
  \begin{displaymath}
    S(z,z) = \ip{(I-P)y}{S(I-P)y} = - \ip{(I-P)y}{U(I-P)y} = -
    \ip{z}{Uz} < 0.
  \end{displaymath}
  Since $Py\in G$, this proves the maximality of $G$.
\end{proof}

For simplicity of notation, in Theorems~\ref{th:9} and~\ref{th:10} we
use a probabilistic setup. We start with a $d$-dimensional random
variable $Y$ on a probability space
$(\Omega, \mathcal{F}, \mathbb{P})$ having a finite second moment:
$Y\in \lsp{2}{d}$. We denote $\nu \set \law{Y}$, the law of $Y$.  Let
$X \in \lsp{2}{d}$. Clearly, $\law{X,Y} \in \Gamma(\nu)$ if and only
if $X = \cEP{Y}{X}$. As usual, we interpret all relations between
random variables in the $\as{\mathbb{P}}$ sense.

\begin{Theorem}
  \label{th:9}
  Let $S\in \msym{d}{m}$, $Y \in \mathcal{L}_2(\real{d})$, assume that
  the covariance matrix of $Y$ is positive definite:
  \begin{displaymath}
    \Sigma \set \EP{(Y - \EP{Y})(Y - \EP{Y})^T} > 0,  
  \end{displaymath}
  and denote $\nu \set \law{Y}$.  Let $P$ be the unique idempotent
  $d\times d$ matrix satisfying~\eqref{eq:17}. The following
  conditions are equivalent:
  \begin{enumerate}[label = {\rm (\alph{*})}, ref={\rm (\alph{*})}]
  \item \label{item:10} An optimal set $G$ for~\eqref{eq:6} is an
    affine subspace of $\real{d}$.
  \item \label{item:11} The random variable
    $X\set \EP{Y} + P(Y-\EP{Y})$ has the martingale property:
    $\cEP{Y}{X} = X$.
  \end{enumerate}
  In this case, the law of $(X,Y)$ is the unique optimal plan
  for~\eqref{eq:4} and
  \begin{equation}
    \label{eq:22}
    G \set \descr{\EP{Y} + P(x-\EP{Y})}{x\in \real{d}}
  \end{equation}
  is the unique affine subspace of $\real{d}$ which is an optimal set
  for~\eqref{eq:6}.

  If $G = \supp{\mu}$ for $\mu \set \law{X}$, then $G$ is the unique
  optimal set.
\end{Theorem}

\begin{proof}
  \ref{item:10} $\implies$ \ref{item:11}: Let us
  show~\eqref{eq:19}. If this condition fails, then there are distinct
  $x_0,x_1 \in G$ such that $S(x_1-x_0,x_1-x_0) = 0$. As $G$ is an
  affine subspace,
  \begin{displaymath}
    x(t) \set x_0 + t(x_1-x_0) \in G, \quad t\in \real{},  
  \end{displaymath}
  and we obtain the lower bound for the Fitzpatrick function $\psi_G$:
  \begin{align*}
    \psi_G(y) & = \sup_{x\in G}\cbraces{S(x,y) - \frac12 S(x,x)} \geq
                \sup_{t\in \real{}} \cbraces{S(x(t),y) - \frac12 S(x(t),x(t))} \\
              &= S(x_0,y) - \frac12 S(x_0,x_0) + \sup_{t\in \real{}} \cbraces{t
                S(y-x_0,x_1-x_0)}. 
  \end{align*}
  It follows that the domain of $\psi_G$ is contained in the
  hyperplane
  \begin{displaymath}
    H \set \descr{y\in \real{d}}{S(y-x_0,x_1-x_0) = 0} 
  \end{displaymath}
  of dimension $d-1$. As
  $\EP{\psi_G(Y)} = \int \psi_G(y) d\nu < \infty$, we deduce that
  $Y\in H$. In particular, the random variables $(Y^n)_{n=1,\dots,d}$
  are linearly dependent, in contradiction with the assumption that
  their covariance matrix $\Sigma>0$.
  
  Given~\eqref{eq:19}, Theorem~\ref{th:8} states that
  \begin{displaymath}
    G = \descr{x_0 + \widetilde{P}(x-x_0)}{x\in \real{d}} 
  \end{displaymath}
  for the idempotent matrix $\widetilde{P}$ satisfying~\eqref{eq:20}
  and some $x_0\in \real{d}$. It also shows that the projection on $G$
  in the $S$-space has the form:
  \begin{displaymath}
    P_G(y) = x_0 + \widetilde{P}(y-x_0), \quad y\in \real{d}.  
  \end{displaymath}

  By Theorem~\ref{th:1}, an optimal plan for~\eqref{eq:4}
  exists. Moreover, as $G$ is an optimal set for \eqref{eq:6}, every
  optimal plan $\gamma$ is supported on the graph
  $\descr{(P_G(y),y)}{y\in \real{d}}$. Since the projection $P_G$ is
  single-valued, such $\gamma$ is unique and given by the joint law of
  $(X,Y)$, where
  \begin{displaymath}
    X \set P_G(Y) = x_0 + \widetilde{P}(Y-x_0). 
  \end{displaymath}
  In particular, $X = \cEP{Y}{X}$ and
  \begin{displaymath}
    \EP{Y} = \EP{X} = x_0 + \widetilde{P}(\EP{Y}-x_0). 
  \end{displaymath}
  It follows that
  \begin{displaymath}
    G = \descr{x_0 + \widetilde{P}(x-x_0)}{x\in \real{d}} = \descr{\EP{Y} +
      \widetilde{P}(x-\EP{Y})}{x\in \real{d}}.  
  \end{displaymath}
  Thus, we can take $x_0 = \EP{Y}$.
  
  As $X = \cEP{Y}{X}$, we have that
  \begin{align*}
    0 & = \EP{(Y-X)(X-\EP{X})^T} = (I-\widetilde{P}) \Sigma
        \widetilde{P}^T = \Sigma \widetilde{P}^T - \widetilde{P} \Sigma
        \widetilde{P}^T \\
      & = (I-\widetilde{P})\Sigma - (I-\widetilde{P}) \Sigma (I-\widetilde{P})^T. 
  \end{align*}
  It follows that $\Sigma \widetilde{P}^T$ and
  $(I-\widetilde{P})\Sigma$ are symmetric positive semi-definite:
  \begin{align*}
    \Sigma \widetilde{P}^T &=\widetilde{P}\Sigma \widetilde{P}^T
                             =    \EP{(X - \EP{X})(X- \EP{X})^T} \geq 0,  \\
    (I-\widetilde{P})\Sigma &= (I-\widetilde{P}) \Sigma (I-\widetilde{P})^T
                              = \EP{(Y - X)(Y - X)^T} \geq 0.
  \end{align*}   
  Thus, $\widetilde{P}$ is a solution
  of~\eqref{eq:17}. Theorem~\ref{th:7} shows that $P =
  \widetilde{P}$. Hence, $G$ is given by~\eqref{eq:22}.
  
  \ref{item:11} $\implies$ \ref{item:10}: By Theorems~\ref{th:7}
  and~\ref{th:8}, the set $G$ in~\eqref{eq:22} belongs to $\mset{S}$
  and $X = P_G(Y)$.  Theorem~\ref{th:1} shows that $G$ is an optimal
  set for~\eqref{eq:6} and $\gamma \set \law{X,Y}$ is the unique
  optimal plan for~\eqref{eq:4}.

  If $\widetilde{G}$ is another optimal set (not necessarily affine),
  then item~\ref{item:3} of Theorem~\ref{th:1} yields that
  \begin{displaymath}
    \supp{\mu} \subset G \cap \widetilde{G}.
  \end{displaymath}
  If $\supp{\mu} = G$, then $G\subset \widetilde{G}$ and the
  maximality of $G$ implies that $G=\widetilde{G}$.
\end{proof}

Assume the setup of Theorem~\ref{th:9} and that $\EP{Y} = 0$.
Item~\ref{item:11} of Theorem~\ref{th:9} can be equivalently stated as
\begin{displaymath}
  \cEP{(I-P)Y}{PY} = 0.
\end{displaymath}
Since $(I-P) \Sigma P^T = 0$, item~\ref{item:11} holds if
\begin{displaymath}
  A\Sigma B^T = 0 \implies \cEP{AY}{BY} = 0,
\end{displaymath}
for all $d\times d$ matrices $A$ and $B$. This is the case, for
instance, if $Y$ has a Gaussian or, more generally, elliptically
contoured distribution with mean zero and covariance matrix $\Sigma$.

\begin{Theorem}
  \label{th:10}
  Let $S\in \msym{d}{m}$ and $Y$ be a $d$-dimensional Gaussian random
  variable with mean zero and the covariance matrix $\Sigma>0$. Let
  $P$ be the unique idempotent $d\times d$ matrix
  satisfying~\eqref{eq:17}.  Then
  \begin{displaymath}
    X=PY, \quad Z= (I-P)Y,
  \end{displaymath}
  are the unique independent $d$-dimensional Gaussian random variables
  with mean zero such that
  \begin{equation}
    \label{eq:23}
    \begin{split}
      & Y=X+Z, \\
      S(X,X) \geq 0, \quad & S(Z,Z)\leq 0, \quad S(X,Z) =0.
    \end{split}
  \end{equation}
  The law of $(X,Y)$ is the unique optimal plan for~\eqref{eq:4} and
  \begin{displaymath}
    \range{P} \set \descr{Px}{x\in \real{d}} 
  \end{displaymath}
  is the unique optimal set for~\eqref{eq:6}, where $\nu\set \law{Y}$.
\end{Theorem}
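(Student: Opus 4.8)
The plan is to deduce the whole statement from the linear-algebra facts of Theorems~\ref{th:6} and~\ref{th:7} together with Theorem~\ref{th:9}; the only new work is a short argument converting almost-sure identities between jointly Gaussian vectors into matrix identities for their covariances.

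First I would verify condition~\ref{item:11} of Theorem~\ref{th:9} for $X:=PY$ and $Z:=(I-P)Y$. Being a linear image of the Gaussian $Y$, the pair $(X,Z)$ is jointly Gaussian and centered, with cross-covariance $P\Sigma(I-P)^T=P\Sigma-P\Sigma P^T$; since $P\Sigma$ is symmetric and $P$ idempotent (Theorem~\ref{th:7}), one has $P\Sigma P^T=P\Sigma$, so $X$ and $Z$ are uncorrelated and hence independent. Consequently $\cEP{Y}{X}=\cEP{X+Z}{X}=X+\EP{Z}=X$, so~\ref{item:11} holds. Theorem~\ref{th:9} then gives at once that $\law{X,Y}$ is the unique optimal plan for~\eqref{eq:4} and that the affine set $\range{P}$ (which is~\eqref{eq:22} with $\EP{Y}=0$) is the unique affine optimal set for~\eqref{eq:6}; moreover $X=PY$ is a centered Gaussian with covariance $P\Sigma P^T$, whose range is $\range{P}$, so $\supp{\law{X}}=\range{P}$, and the last assertion of Theorem~\ref{th:9} upgrades $\range{P}$ to the unique optimal set.

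Next I would check that $X$ and $Z$ (independent by the above) satisfy~\eqref{eq:23}, in fact surely: $X(\omega)\in\range{P}=G\in\mset{S}$ and $0\in G$, so $S(X,X)\geq 0$; dually $Z(\omega)$ lies in the range of $I-P$ and Theorem~\ref{th:7} yields $S(I-P)=(I-P)^TS(I-P)\leq 0$, hence $S(Z,Z)=Y^T(I-P)^TS(I-P)Y\leq 0$; and $S(X,Z)=Y^TP^TS(I-P)Y=0$ because $P^TS(I-P)=\cbraces{(I-P)^TSP}^T=0$ by Theorem~\ref{th:7}. This is the existence part. For uniqueness, let $X',Z'$ be independent centered Gaussians with $Y=X'+Z'$ satisfying~\eqref{eq:23}, and put $Q':=\Cov(X')$, $R':=\Cov(Z')$, so $\Sigma=Q'+R'$. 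The support of $X'$ is the subspace $\range{Q'}$, on which $S(x,x)\geq 0$ since $\braces{x:S(x,x)\geq 0}$ is closed of full $\law{X'}$-measure; hence $(Q')^{1/2}S(Q')^{1/2}\geq 0$, and thus $Q'SQ'=(Q')^{1/2}\cbraces{(Q')^{1/2}S(Q')^{1/2}}(Q')^{1/2}\geq 0$, and symmetrically $R'SR'\leq 0$. Conditioning on $Z'$ in the a.s.\ identity $\ip{X'}{SZ'}=0$ and using independence, for $\law{Z'}$-a.e.\ $z$ the vector $Sz$ is almost surely orthogonal to $X'$, hence orthogonal to $\range{Q'}$; as $\braces{z:Sz\perp\range{Q'}}$ is a closed subspace of full $\law{Z'}$-measure it contains $\range{R'}$, and this says precisely $Q'SR'=0$. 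Theorem~\ref{th:6} now forces $Q'=Q=P\Sigma$ and $R'=R=(I-P)\Sigma$. Finally, $(X',Y)$ is jointly Gaussian and centered with $\Cov(X',Y)=Q'=Q=\Cov(X,Y)$, so $\Cov(X',X)=\Cov(X',Y)P^T=QP^T=Q$ and therefore $\Cov(X'-X)=\Cov(X')-\Cov(X',X)-\Cov(X,X')+\Cov(X)=Q-Q-Q+Q=0$, giving $X'=X$ and $Z'=Z$ almost surely.

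I expect the delicate step to be the derivation of $Q'SR'=0$ from the almost-sure identity $S(X',Z')=0$: one must pass carefully from a statement about the random vectors to one about the ranges of their covariance matrices, using that the support of a centered Gaussian equals the range of its covariance and that a centered Gaussian vector is almost surely orthogonal to a fixed vector exactly when that vector annihilates its covariance. Everything else is bookkeeping with the linear-algebra identities already established.
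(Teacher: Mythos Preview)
Your proposal is correct and follows essentially the same route as the paper: verify that $X=PY$ and $Z=(I-P)Y$ are independent Gaussians satisfying~\eqref{eq:23}, invoke Theorem~\ref{th:9} for the optimality and uniqueness of the plan and the set, and for uniqueness of the decomposition translate~\eqref{eq:23} for a competitor $(X',Z')$ into the matrix conditions~\eqref{eq:16} on $(Q',R')$ and then apply Theorem~\ref{th:6} (the paper uses the equivalent Theorem~\ref{th:7}). The only cosmetic difference is your last step, where you compute $\Cov(X'-X)=0$; the paper instead notes $\law{X',Y}=\law{PY,Y}$ and uses that $PY$ is a function of $Y$ to conclude $X'=PY$.
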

\begin{proof}
  Clearly, $X$ and $Z$ are $d$-dimensional Gaussian random variables
  with mean zero. Their covariance matrices are given by
  \begin{align*}
    \EP{XX^T} & = P \EP{YY^T} P^T = P\Sigma P^T = P\Sigma, \\
    \EP{ZZ^T} & =  (I-P)\Sigma (I-P)^T =
                (I-P)\Sigma, \\
    \EP{XZ^T} & =  P\Sigma (I-P)^T = 0. 
  \end{align*}
  The last identity implies that $X$ and $Z$ are independent random
  variables. The relations~\eqref{eq:23} readily follow from the
  properties~\eqref{eq:17} of the idempotent matrix $P$.

  Let $X'$ and $Z'$ be independent $d$-dimensional Gaussian random
  variables with mean zero, the covariance matrices $Q'$ and $R'$,
  respectively, and such that
  \begin{gather*}
    Y = X' + Z', \\
    S(X',X') \geq 0, \quad S(Z',Z')\leq 0, \quad S(X',Z') =0.
  \end{gather*}
  As $X'$ and $Z'$ are Gaussian random variables, their laws are
  supported by the ranges of the covariance matrices.  Accounting for
  the independence of $X'$ and $Z'$ we deduce that
  \begin{gather*}
    Q' + R' = \Sigma, \\
    S(x,x) \geq 0, \; S(z,z) \leq 0, \; S(x,z) = 0, \quad x\in
    \range{Q'}, \; z\in \range{R'},
  \end{gather*}
  and then that
  \begin{displaymath}
    Q'SQ' \geq 0, \quad  R'SR' \leq 0, \quad Q'SR' = 0. 
  \end{displaymath}
  Theorem~\ref{th:7} shows that $P = Q' \Sigma^{-1}$. Then
  $Q' = P\Sigma$ and $R' = (I-P)\Sigma$ are also the covariance
  matrices of $X$ and $Z$, respectively.  It follows that
  $\law{X',Z'} = \law{X,Z}$, that
  \begin{displaymath}
    \law{X',Y} = \law{X,Y} = \law{PY, Y}, 
  \end{displaymath}
  and then that $X'=PY=X$ and $Z' = Y-X' = Y-X = Z$.

  Finally, Theorem~\ref{th:9} yields the optimality and uniqueness
  properties of $\gamma \set \law{X,Y}$ and $G\set \range{P}$.
\end{proof}

\appendix

\section{Fitzpatrick functions in the $S$-space}
\label{sec:basic-prop-fitzp}

For $S\in \msym{d}{m}$, the family of symmetric $d\times d$-matrices
of full rank with $m$ positive eigenvalues, we recall the notation
\begin{displaymath}
  S(x,y) \set \ip{x}{Sy} = \ip{Sx}{y}, \quad x,y\in \real{d}, 
\end{displaymath}
for the bilinear form and $\mset{S}$ for the family of maximal
$S$-monotone sets. Clearly, $S^{-1}$, the inverse matrix to $S$,
belongs to $\msym{d}{m}$.

For $S \in \msym{2m}m$ from Example \ref{ex:3}, many results of this
appendix can be found in \cite{Fitzpatrick:88} and
\cite{PenotZalin:05}. The case of general symmetric $S$ has been
studied in \cite{Simons:07} and~\cite{Penot:09}.

\begin{Theorem}
  \label{th:11}
  Let $S\in \msym{d}{m}$ and $\map{f}{\real{d}}{\realext}$ be a closed
  convex function such that
  \begin{equation}
    \label{eq:24}
    \min(f(x), f^*(Sx)) \geq \frac12 S(x,x), \quad x\in \real{d}.   
  \end{equation}
  Then the set $G \set \descr{x \in \real{d}}{Sx\in \partial f(x)}$
  belongs to $\mset{S}$ and has equivalent descriptions:
  \begin{equation}
    \label{eq:25}
    G = \descr{x \in \real{d}}{f(x) = \frac12 S(x,x)} = \descr{x
      \in \real{d}}{f^*(Sx) = \frac12 S(x,x)}. 
  \end{equation}
\end{Theorem}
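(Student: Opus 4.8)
The plan is to split off the ``soft'' content --- $S$-monotonicity of $G$ and the coincidence of the three descriptions in \eqref{eq:25} --- from the genuinely hard part, maximality.

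Write $q(z):=\tfrac12 S(z,z)$, so that \eqref{eq:24} reads $f\ge q$ and $f^{*}\circ S\ge q$. The inclusions ``$\subseteq$'' in \eqref{eq:25} are immediate from \eqref{eq:3}: if $Sx\in\partial f(x)$ then $f(x)+f^{*}(Sx)=\ip{x}{Sx}=S(x,x)=2q(x)$, while $f(x)\ge q(x)$ and $f^{*}(Sx)\ge q(x)$, so both must equal $q(x)$. $S$-monotonicity of $G$ is just as quick: for $x,y\in G$ the monotonicity of $\partial f$ gives $0\le\ip{Sx-Sy}{x-y}=S(x-y,x-y)$.

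For the reverse inclusion $\{x:f(x)=q(x)\}\subseteq G$, I would observe that such an $x_{0}$ is a global minimizer of $f-q$ (nonnegative by \eqref{eq:24}, zero at $x_{0}$). Expanding $q(x_{0}+tv)=q(x_{0})+t\ip{Sx_{0}}{v}+\tfrac{t^{2}}2 S(v,v)$ and using $f\ge q$ with $f(x_{0})=q(x_{0})$ gives $t^{-1}\big(f(x_{0}+tv)-f(x_{0})\big)\ge\ip{Sx_{0}}{v}+\tfrac t2 S(v,v)$ for $t>0$; letting $t\downarrow0$ yields $f'(x_{0};v)\ge\ip{Sx_{0}}{v}$ for every $v$, hence $Sx_{0}\in\partial f(x_{0})$ by \cite[Theorem~23.2]{Rock:70}, i.e. $x_{0}\in G$. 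For the third set in \eqref{eq:25} I would pass to $g:=f^{*}\circ S$: since $S$ is invertible one checks $g^{*}(Sz)=f(z)$, so $g$ is again closed convex and satisfies \eqref{eq:24}, and the chain rule for subdifferentials together with \eqref{eq:3} gives $\{z:Sz\in\partial g(z)\}=\{z:z\in\partial f^{*}(Sz)\}=G$; applying the case already treated to $g$ then gives $G=\{z:g(z)=q(z)\}=\{z:f^{*}(Sz)=q(z)\}$. (Note \eqref{eq:24} forces $f$ and $f^{*}$ to be proper, and $G=\{f=q\}$ is closed, $f$ being lower semi-continuous.)

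The main obstacle is maximality. By definition it suffices to prove: if $x_{0}\in\real{d}$ satisfies $S(x_{0}-x,x_{0}-x)\ge0$ for every $x\in G$, then $x_{0}\in G$ --- equivalently, by \eqref{eq:25}, $f^{*}(Sx_{0})=q(x_{0})$. The natural device is the convex function $p(z):=f(z)-S(x_{0},z)+q(x_{0})$: by the definition of the conjugate, $\inf p=q(x_{0})-f^{*}(Sx_{0})\le0$ (the inequality being \eqref{eq:24}), while for $z\in G$ one has $f(z)=q(z)$ and $S(x_{0},z)\le q(x_{0})+q(z)$ by the relatedness assumption, so $p\ge0$ on $G$. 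Thus one must show $\inf p=0$, for which one argues that a (suitably regularized) minimizer exists, satisfies $Sx_{0}\in\partial f(\cdot)$, and --- using \emph{both} bounds in \eqref{eq:24} --- is forced to lie in $G$, which contradicts $p\ge0$ on $G$ unless $\inf p=0$. This is the representative-function mechanism behind the results of \cite{Simons:07} and \cite{Penot:09}; the delicate step is making the extremal point land in the closed set $G$ rather than merely in some larger $S$-monotone set, and in the general pseudo-Euclidean setting this is precisely where the distinction between $S$ and $S^{-1}$ --- invisible for the matrix \eqref{eq:2} --- has to be handled. Once maximality is established, $G$ is in particular nonempty, so $G\in\mset{S}$.
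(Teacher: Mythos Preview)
Your treatment of the ``soft'' parts --- the equivalent descriptions in \eqref{eq:25} and the $S$-monotonicity of $G$ --- is correct and matches the paper's argument essentially line for line (your reduction of the $f^{*}\circ S$ case to the $f$ case via $g:=f^{*}\circ S$ is a minor reorganization of what the paper does by repeating the directional-derivative computation for $f^{*}$).

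The maximality argument, however, is not a proof but a description of where the difficulty lies. You correctly set up $p(z)=f(z)-S(x_{0},z)+q(x_{0})$ with $\inf p=q(x_{0})-f^{*}(Sx_{0})\le0$ and $p\ge0$ on $G$, but the assertion that a (regularized) minimizer $z^{*}$ of $p$ ``is forced to lie in $G$'' is exactly the missing step: a minimizer satisfies $Sx_{0}\in\partial f(z^{*})$, not $Sz^{*}\in\partial f(z^{*})$, and the two bounds in \eqref{eq:24} by themselves do not convert the former into the latter. You acknowledge that this is ``the delicate step'' and gesture at \cite{Simons:07} and \cite{Penot:09}, but you do not supply the mechanism, nor is it clear what regularization you have in mind that would make the minimizer land in $G$ rather than merely in $\partial f^{*}(Sx_{0})$.

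The paper's route is quite different and worth knowing. It first reduces, by a linear change of variables $S=V^{T}UV$ with $U$ symmetric and orthogonal, to the case $S=S^{-1}$; this is precisely what neutralizes the ``distinction between $S$ and $S^{-1}$'' you flag, and it makes \eqref{eq:24} symmetric in $f$ and $f^{*}$. For $u\notin G$ one then applies the Moreau decomposition to $v:=u+Su$, writing $u+Su=x+y$ with $y\in\partial f(x)$. Orthogonality of $S$ gives $Sx+Sy=Su+S^{2}u=x+y$, hence $x-Sy=Sx-y$, and therefore
\[
\lvert x-Sy\rvert^{2}=\ip{x-Sy}{Sx-y}=S(x,x)+S(y,y)-2\ip{x}{y}\le 2\bigl(f(x)+f^{*}(y)-\ip{x}{y}\bigr)=0,
\]
using both halves of \eqref{eq:24} and the orthogonality $\ip{Sx}{Sy}=\ip{x}{y}$. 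Thus $y=Sx\in\partial f(x)$, so $x\in G$, and then $x-u=S(u-x)$ yields $S(x-u,x-u)=-\lvert x-u\rvert^{2}<0$, proving maximality. The Moreau-decomposition trick produces the element of $G$ directly; there is no minimization of $p$ and no appeal to a regularized extremal point.
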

\begin{proof}
  By the properties~\eqref{eq:3} of subdifferentials of convex
  functions,
  \begin{align*}
    Sx \in \partial f(x) \iff x\in \partial
    f^*(Sx)  \iff f(x) + f^*(Sx) =  S(x,x). 
  \end{align*}
  The latter representation and~\eqref{eq:24} imply that
  \begin{displaymath}
    G = \descr{x \in \real{d}}{f(x) = 
      f^*(Sx) = \frac12 S(x,x)}.
  \end{displaymath}
  
  Let $u\in \real{d}$. If $f(x) = \frac12 S(x,x)$, then
  inequality~\eqref{eq:24} yields that
  \begin{align*}
    \lim_{t\downarrow 0}\frac{1}t \cbraces{f(x+tu) -
    f(x)} & \geq \lim_{t\downarrow 0} \frac1{2t} \cbraces{S(x+tu,x+tu) -
            S(x,x)} \\ & = S(x,u) = \ip{Sx}{u}.
  \end{align*}
  It follows that $Sx \in \partial f(x)$, proving the first equality
  in~\eqref{eq:25}.

  Similarly, if $f^*(Sx) = \frac12 S(x,x)$, then
  \begin{align*}
    \lim_{t\downarrow 0}\frac{1}t \cbraces{f^*(Sx+tu) -
    f^*(Sx)} & \geq \lim_{t\downarrow 0} \frac1{2t}
               \cbraces{S(x+tS^{-1}u,x+tS^{-1}u) - S(x,x)}
    \\ & = S(x,S^{-1}u) = \ip{x}{u}. 
  \end{align*}
  It follows that $x \in \partial f^*(Sx)$ or equivalently, that
  $Sx \in \partial f(x)$, proving the second equality
  in~\eqref{eq:25}.

  If $x,y\in G$, then $(x,Sx)$ and $(y, Sy)$ belong to the graph of
  $\partial f$, which is a classical monotone set in $\real{2d}$:
  \begin{displaymath}
    S(x-y,x-y) = \ip{x-y}{Sx-Sy} \geq 0. 
  \end{displaymath}
  Hence, $G$ is $S$-monotone.

  To show that $G$ is maximal, we assume first that $S$ is an
  orthogonal matrix:
  \begin{displaymath}
    S = S^{-1}. 
  \end{displaymath}
  In this case, condition~\eqref{eq:24} becomes symmetric with respect
  to $f$ and $f^*$:
  \begin{equation}
    \label{eq:26}
    \max\cbraces{f(x), f^*(x)} \geq \frac 12 S(x,x), \quad x\in \real{d}. 
  \end{equation}
  For every $v\in \real{d}$, the Moreau decomposition from
  \cite[Theorem~31.5, p.~338]{Rock:70} yields unique
  $x\in \dom{\partial f}$ and $y\in \partial f(x)$ such that
  $v=x+y$. Taking $u\in \real{d}\setminus G$ and setting $v=u + Su$ we
  obtain that
  \begin{displaymath}
    u+Su = x+y. 
  \end{displaymath}
  The orthogonality of $S$ yields that
  \begin{displaymath}
    Sx + Sy = Su + S^2 u= Su + u = x + y. 
  \end{displaymath}
  It follows that $x-Sy = Sx-y$ and
  \begin{displaymath}
    \abs{x-Sy}^2 = \abs{Sx-y}^2 = \ip{x-Sy}{Sx-y}.
  \end{displaymath}
  On the other hand, from~\eqref{eq:26}, the orthogonality of $S$, and
  the fact that $y\in \partial f(x)$ we deduce that
  \begin{align*}
    \frac12 \ip{x-Sy}{Sx-y} &= \frac12 \cbraces{S(x,x)+ S(y,y) -
                              \ip{x}{y} - \ip{Sx}{Sy}} \\
                            &= \frac12 \cbraces{S(x,x)+ S(y,y)} -
                              \ip{x}{y} \\
                            & \leq f(x) + f^*(y) - \ip{x}{y} = 0. 
  \end{align*}
  It follows that $Sx=y\in \partial f(x)$ and then that $x \in
  G$. Since $u + Su=x+y = x+Sx$, we have that $x-u = Su-Sx$. It
  follows that
  \begin{displaymath}
    S(x-u,x-u) = \ip{x-u}{Sx-Su} = -\abs{x-u}^2 < 0,  
  \end{displaymath}
  proving the maximality of the $S$-monotone set $G$.

  In the general case, we decompose the symmetric matrix $S$ as
  \begin{displaymath}
    S = V^T U V, 
  \end{displaymath}
  where $V$ is a $d\times d$-matrix of full rank and $U$ is symmetric
  and orthogonal. The law of inertia for quadratic forms,
  \cite[Theorem~1, p.~297]{Gant-1:98}, states that
  $U \in \msym{d}{m}$. We can actually take $U$ to be of the canonical
  form of Example~\ref{ex:4}. It is easy to see that
  \begin{displaymath}
    S(x,x)  = U(Vx,Vx), \quad x\in \real{d}, 
  \end{displaymath}
  and
  \begin{displaymath}
    G \in \mset{S} \iff F\set VG = \descr{x}{V^{-1}x\in G} \in \mset{U}. 
  \end{displaymath}
  
  We define the closed convex function
  \begin{displaymath}
    g(x) \set f(V^{-1}x), \quad x\in \real{d},      
  \end{displaymath}
  and deduce from~\eqref{eq:25} that
  \begin{displaymath}
    F = \descr{x}{f(V^{-1}x)= \frac12
      S(V^{-1}x,V^{-1}x)} = \descr{x}{g(x) = \frac12 U(x,x)}. 
  \end{displaymath}
  In view of~\eqref{eq:24} we have that
  \begin{align*}
    g(x) & \geq \frac12 S(V^{-1}x,V^{-1}x) = \frac12 U(x,x),\\
    g^*(y) &= \sup_{x\in \real{d}} \cbraces{\ip{Vx}{y} - g(Vx)} =
             \sup_{x\in \real{d}} \cbraces{\ip{x}{V^Ty} - f(x)} = f^*(V^Ty), \\
    g^*(Ux) & = f^*(V^TUx) = f^*(SV^{-1}x) \geq  \frac12 S(V^{-1}x,V^{-1}x) = 
              \frac12 U(x,x). 
  \end{align*}
  Thus, $F \in \mset{U}$ by what has been already proved.
\end{proof}

Let $G\in \mset{S}$. We recall that the Fitzpatrick function $\psi_G$
is defined as
\begin{gather*}
  \psi_G(y) = \sup_{x\in G} \cbraces{S(x,y) - \frac12 S(x,x)}, \quad y
  \in \real{d}.
\end{gather*}

\begin{Theorem}
  \label{th:12}
  Let $S \in \msym{d}{m}$ and $G\in \mset{S}$. The Fitzpatrick
  function $\psi_G$ is a closed convex function on $\real{d}$ taking
  values in $\realext$ such that
  \begin{equation}
    \label{eq:27}
    \psi_G^*(Sx) \geq \psi_G(x) \geq
    \frac12 S(x,x), \quad x\in \real{d}.    
  \end{equation}
  We have that
  \begin{equation}
    \label{eq:28}
    \begin{split}
      x\in G &\iff Sx\in \partial \psi_G(x) \iff x\in \partial
      \psi^*_G(Sx) \\
      &\iff \psi_G(x) = \frac12 S(x,x) \iff \psi_G^*(Sx) = \frac12
      S(x,x).
    \end{split}
  \end{equation}
\end{Theorem}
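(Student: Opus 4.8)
The plan is to derive the entire statement from Theorem~\ref{th:11} applied to $f = \psi_G$, the only genuine input being the identity $\psi_G(x) = \frac12 S(x,x)$ for $x \in G$. First I would record that $\psi_G$ is a supremum of the affine functions $y \mapsto \ip{Sx}{y} - \frac12 S(x,x)$ indexed by $x \in G$; since a maximal $S$-monotone set is nonempty, $\psi_G$ is a proper closed convex function with values in $\realext$. The crux is that $\psi_G$ coincides with $\frac12 S(\cdot,\cdot)$ on $G$: for $x, z \in G$, the $S$-monotonicity $S(x-z,x-z)\geq 0$ rearranges (using symmetry of $S$) to $S(z,x) - \frac12 S(z,z) \leq \frac12 S(x,x)$, so taking the supremum over $z \in G$ gives $\psi_G(x) \leq \frac12 S(x,x)$, while the choice $z = x$ in the defining supremum gives the reverse inequality; hence $\psi_G(x) = \frac12 S(x,x)$ for every $x \in G$.

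Next I would establish~\eqref{eq:27}. For the global lower bound $\psi_G(x) \geq \frac12 S(x,x)$: if it failed at some $x_0$, then $S(z,x_0) - \frac12 S(z,z) < \frac12 S(x_0,x_0)$ for every $z \in G$, i.e.\ $S(x_0 - z, x_0 - z) > 0$ for every $z \in G$; in particular $x_0 \notin G$ and $G \cup \braces{x_0}$ is $S$-monotone, contradicting the maximality of $G$. For the bound $\psi_G^*(Sx) \geq \psi_G(x)$: restrict the supremum in $\psi_G^*(Sx) = \sup_{w}\cbraces{\ip{Sx}{w} - \psi_G(w)}$ to $w \in G$ and use $\psi_G(w) = \frac12 S(w,w)$ there to obtain $\psi_G^*(Sx) \geq \sup_{w \in G}\cbraces{S(x,w) - \frac12 S(w,w)} = \psi_G(x)$. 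Together these inequalities are exactly~\eqref{eq:27}.

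Finally,~\eqref{eq:27} is precisely hypothesis~\eqref{eq:24} for $f = \psi_G$, so Theorem~\ref{th:11} applies: the set $G' \set \descr{x \in \real{d}}{Sx \in \partial \psi_G(x)}$ lies in $\mset{S}$ and equals both $\descr{x}{\psi_G(x) = \frac12 S(x,x)}$ and $\descr{x}{\psi_G^*(Sx) = \frac12 S(x,x)}$. By the crux above, $G \subset \descr{x}{\psi_G(x) = \frac12 S(x,x)} = G'$; since $G'$ is $S$-monotone and $G$ is maximal $S$-monotone, this forces $G = G'$. That yields every equivalence in~\eqref{eq:28} except the one involving $\partial \psi_G^*$, which follows from~\eqref{eq:3} applied to $\psi_G$ at the point $Sx$, giving $Sx \in \partial \psi_G(x) \iff x \in \partial \psi_G^*(Sx)$. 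The one place calling for a small idea is the inequality $\psi_G^*(Sx) \geq \psi_G(x)$, which is exactly why identifying $\psi_G$ with $\frac12 S(\cdot,\cdot)$ on $G$ is done first; after that everything is a direct invocation of Theorem~\ref{th:11} together with the maximality of $G$.
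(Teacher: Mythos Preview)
Your proof is correct and follows essentially the same approach as the paper: establish $\psi_G = \frac12 S(\cdot,\cdot)$ on $G$ from $S$-monotonicity, obtain the global lower bound $\psi_G \geq \frac12 S(\cdot,\cdot)$ from maximality, get $\psi_G^*(S\cdot) \geq \psi_G$ by restricting the conjugate supremum to $G$, and then invoke Theorem~\ref{th:11} together with~\eqref{eq:3}. The only cosmetic difference is that the paper deduces $x\notin G \Rightarrow \psi_G(x) > \frac12 S(x,x)$ directly (hence $G = \{x:\psi_G(x)=\frac12 S(x,x)\}$) before calling Theorem~\ref{th:11}, whereas you first apply Theorem~\ref{th:11} to get $G'\in\mset{S}$ and then conclude $G=G'$ from $G\subset G'$ and the maximality of $G$; both routes use the same maximality idea.
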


\begin{proof}
  Being an upper envelope of a family of linear functions, $\psi_G$ is
  a closed convex function taking values in $\realext$. We have that
  \begin{displaymath}
    S(x,x) - 2\psi_G(x) = \phi_G(x) \set \inf_{y\in G} S(x-y,x-y),
    \quad x \in \real{d}.  
  \end{displaymath}
  As $G$ is $S$-monotone, $\psi_G(x) = \frac12 S(x,x)$, $x \in G$.
  The maximal $S$-monotonicity of $G$ yields that
  $\psi_G(x) > \frac12 S(x,x)$, $x \not\in G$, as, otherwise,
  $G\cup \braces{x}$ is an $S$-monotone set.  Therefore,
  $\psi_G(x) \geq \frac12 S(x,x)$, $x \in \real{d}$, and $x\in G$ if
  and only if $\psi_G(x) = \frac12 S(x,x)$.  We now have
  \begin{align*}
    \psi_G^*(Sy)
    & = \sup_{x\in \real{d}}
      \cbraces{\ip{x}{Sy}  -
      \psi_G(x)} 
      \geq \sup_{x\in G} \cbraces{S(x,y)  -
      \frac12 S(x,x)}  \\
    & = \psi_G(y) \geq \frac12 S(y,y), \quad y \in \real{d}, 
  \end{align*}
  proving~\eqref{eq:27}.

  The second equivalence in~\eqref{eq:28} holds by the
  properties~\eqref{eq:3} of subdifferentials of convex functions.
  Theorem~\ref{th:11} yields the other equivalences, completing the
  proof.
\end{proof}

For a function $f$ taking values in $\realext$, we denote by
$\conv{f}$ its convex hull, that is the largest convex function
smaller than $f$, and by $\clconv{f}$ its \emph{closed} convex hull:
\begin{displaymath}
  \clconv{f}(x) = \liminf_{y\to x} \conv{f}(y) \set \lim_{\epsilon
    \downarrow 0} \inf_{\abs{y-x}<\epsilon} \conv{f}(y).
\end{displaymath}
We denote by $\ps{f}{\real{d}}$ the family of probability measures on
$\real{d}$ with finite support.

\begin{Theorem}
  \label{th:13}
  Let $S\in \msym{d}{m}$. For a function
  $\map{f}{\real{d}}{\realext}$, the following conditions are
  equivalent:
  \begin{enumerate}[label = {\rm (\alph{*})}, ref={\rm (\alph{*})}]
  \item \label{item:12} For every $\mu\in \ps{f}{\real{d}}$,
    \begin{displaymath}
      \mu(f) \set \int f d\mu \geq \frac{1}{2} S(\mu(x),\mu(x)).
    \end{displaymath}
  \item\label{item:13} $\clconv{f}(x) \geq \frac12 S(x,x)$,
    $x\in \real{d}$.
  \item \label{item:14} There is $G\in \mset{S}$ such that
    $f\geq \psi_G$.
  \end{enumerate}
\end{Theorem}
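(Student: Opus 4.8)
The plan is to prove the cycle \ref{item:14} $\implies$ \ref{item:12} $\implies$ \ref{item:13} $\implies$ \ref{item:14}; the first two implications are routine. If $f\ge\psi_G$ for some $G\in\mset{S}$, then for every $\mu\in\ps{f}{\real{d}}$ the convexity of $\psi_G$ and the bound $\psi_G\ge\frac12 S(\cdot,\cdot)$ from Theorem~\ref{th:12}, together with Jensen's inequality, give $\mu(f)\ge\mu(\psi_G)\ge\psi_G(\mu(x))\ge\frac12 S(\mu(x),\mu(x))$, which is \ref{item:12}. For \ref{item:12} $\implies$ \ref{item:13}, recall that for $v\in\real{d}$ the convex hull satisfies $\conv f(v)=\inf\{\mu(f):\mu\in\ps{f}{\real{d}},\ \mu(x)=v\}$; hence \ref{item:12} forces $\conv f\ge\frac12 S(\cdot,\cdot)$, and since the quadratic $\frac12 S(\cdot,\cdot)$ is continuous, passing to lower limits gives $\clconv f\ge\frac12 S(\cdot,\cdot)$.

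The substantial implication is \ref{item:13} $\implies$ \ref{item:14}. Write $q(x):=\frac12 S(x,x)$. If $f\equiv+\infty$ the statement is trivial (take any $G\in\mset{S}$, which exists by Zorn's lemma), so assume $g:=\clconv f$ is a proper closed convex function with $g\ge q$; it suffices to produce $G\in\mset{S}$ with $\psi_G\le g$, since then $\psi_G\le g\le f$. Consider the family $\mathcal F$ of closed convex functions $h:\real{d}\to\realext$ with $q\le h\le g$, partially ordered by the pointwise inequality. It contains $g$, and every chain in $\mathcal F$ has a lower bound in $\mathcal F$: the pointwise infimum of a chain of convex functions is convex, its lower semicontinuous hull is closed convex, stays $\le g$ because $g$ is closed, and stays $\ge q$ because $q$ is continuous. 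By Zorn's lemma $\mathcal F$ has a minimal element $\bar g$, necessarily proper.

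The heart of the argument is the claim that this minimality forces the conjugate bound $\bar g^*(Sx)\ge q(x)$ for all $x\in\real{d}$. Suppose $\bar g^*(S\hat x)<q(\hat x)$ for some $\hat x$. Then the affine function $\ell(y):=S(\hat x,y)-q(\hat x)$ satisfies $\ell<\bar g$ on all of $\real{d}$, the global gap being $q(\hat x)-\bar g^*(S\hat x)>0$; in particular $\bar g(\hat x)>q(\hat x)$. Let $h$ be the closed convex function whose epigraph is the closed convex hull of $\operatorname{epi}\bar g\cup\{(\hat x,q(\hat x))\}$, so that $h$ is closed convex, $h\le\bar g\le g$, and $h(\hat x)\le q(\hat x)<\bar g(\hat x)$, whence $h\ne\bar g$. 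The remaining point is that $h\ge q$: writing a point of $\operatorname{epi} h$ as $(1-\lambda)(y,\bar g(y))+\lambda(\hat x,q(\hat x))$ and expanding the quadratic, the inequality $h(z)\ge q(z)$ reduces to $\bar g(y)\ge(1-\lambda)q(y)+\lambda\ell(y)$, whose right-hand side is a convex combination of $q(y)$ and $\ell(y)$ and therefore at most $\max(q(y),\ell(y))\le\bar g(y)$ by $\bar g\ge q$ and $\bar g>\ell$; and passing to the lower semicontinuous hull does not disturb this, $q$ being continuous. Thus $h\in\mathcal F$, contradicting the minimality of $\bar g$.

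Granting the claim, $\bar g$ satisfies $\min(\bar g(x),\bar g^*(Sx))\ge q(x)$ for all $x$, so Theorem~\ref{th:11} applies: $G:=\{x:Sx\in\partial\bar g(x)\}$ belongs to $\mset{S}$ and equals $\{x:\bar g^*(Sx)=q(x)\}$. For $x\in G$ and any $y$, the definition of the conjugate gives $\bar g(y)\ge\ip{Sx}{y}-\bar g^*(Sx)=S(x,y)-q(x)$; taking the supremum over $x\in G$ yields $\psi_G\le\bar g\le g\le f$, which is \ref{item:14}. The only genuine obstacle is the key claim of the previous paragraph — that a minimal element of $\mathcal F$ already obeys the symmetric estimate required by Theorem~\ref{th:11}; the perturbation $h$, obtained by adjoining one point below the graph of $\bar g$, is engineered precisely so that the elementary bound $(1-\lambda)q(y)+\lambda\ell(y)\le\max(q(y),\ell(y))$ keeps it above $q$. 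Everything else — verifying the hypotheses of Zorn's lemma, the Jensen steps, and the passage through Theorem~\ref{th:11} — is routine.
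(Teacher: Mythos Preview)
Your argument is correct and follows the same overall architecture as the paper --- the cycle \ref{item:14} $\Rightarrow$ \ref{item:12} $\Rightarrow$ \ref{item:13} $\Rightarrow$ \ref{item:14}, with the first two implications handled exactly as the authors do and the third built on a minimal element of the family of closed convex majorants of $q(x)=\tfrac12 S(x,x)$ --- but the execution of the hard step differs in two places. First, you obtain the minimal element via Zorn's lemma, whereas the paper gives a constructive two--stage exhaustion (first stabilizing the domain, then minimizing an integral functional); your route is shorter, theirs avoids the axiom of choice. Second, and more interestingly, for the key claim that a minimal $\bar g$ satisfies $\bar g^*(Sx)\ge q(x)$, the paper translates to the origin, redefines the function to vanish there, and verifies condition~\ref{item:12} for the perturbed function by decomposing finitely supported measures as $t\delta_0+(1-t)\nu$ --- thus invoking the already--proven implication \ref{item:12} $\Rightarrow$ \ref{item:13} inside the proof of \ref{item:13} $\Rightarrow$ \ref{item:14}. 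Your epigraph perturbation is more direct: adjoining the single point $(\hat x,q(\hat x))$ and checking $h\ge q$ via the elementary identity
\[
q\bigl((1-\lambda)y+\lambda\hat x\bigr)=(1-\lambda)\bigl[(1-\lambda)q(y)+\lambda\ell(y)\bigr]+\lambda q(\hat x),
\]
so that $h\ge q$ reduces to the convex--combination bound $(1-\lambda)q(y)+\lambda\ell(y)\le\max(q(y),\ell(y))\le\bar g(y)$. This is self--contained and makes the role of the quadratic structure of $q$ completely explicit; the paper's version is a bit more roundabout but has the virtue of reusing the measure--theoretic formulation. One minor point you might make explicit: the closed convex hull of $\operatorname{epi}\bar g\cup\{(\hat x,q(\hat x))\}$ is indeed an epigraph because $\operatorname{epi}\bar g$ already contains the vertical recession direction, so the closure picks up the ray $\{\hat x\}\times[q(\hat x),\infty)$.
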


\begin{proof}
  \ref{item:14} $\implies$ \ref{item:12}: By Theorem~\ref{th:12},
  $\psi_G$ is convex and $\psi_G(x)\geq \frac12 S(x,x)$,
  $x \in \real{d}$. For $\mu\in \ps{f}{\real{d}}$, the convexity of
  $\psi_G$ yields that
  \begin{displaymath}
    \mu(f) \geq \mu(\psi_G) \geq \psi_G(\mu(x)) \geq
    \frac12 S(\mu(x),\mu(x)).
  \end{displaymath}
  
  \ref{item:12} $\implies$ \ref{item:13}: We recall,
  \cite[Theorem~5.6, p.~37]{Rock:70}, that
  \begin{displaymath}
    \conv f(y) = \inf\descr{\mu(f)}{\mu\in \ps{f}{\real{d}}, \;
      \mu(x)=y}, \quad y\in \real{d}.   
  \end{displaymath}
  It follows that $\conv f(x) \geq \frac12 S(x,x)$, $x\in \real{d}$.
  Finally,
  \begin{displaymath}
    \clconv{f}(x) = \liminf_{y\to x} \conv{f}(y)
    \geq \frac12 \liminf_{y\to x} 
    S(y,y) = \frac12 S(x,x). 
  \end{displaymath}

  \ref{item:13} $\implies$ \ref{item:14}: Let $\Psi$ be the family of
  closed convex functions $g=g(x)$ on $\real{d}$ such that
  $g(x) \geq \frac12 S(x,x)$. We call $h\in \Psi$ \emph{minimal} if
  there is no $g\in \Psi$ such that $g\leq h$ and $g\not=h$.

  \begin{Claim}
    \label{claim:1}
    For every $g\in \Psi$, there is a minimal $h\in \Psi$ such that
    $g\geq h$.
  \end{Claim}

  \begin{proof}
    For $h\in \Psi$, we write
    \begin{displaymath}
      \Psi(h) \set \descr{\widetilde{h}\in \Psi}{\widetilde{h}\leq h}. 
    \end{displaymath}
    Clearly, $\dom{h} \subset \dom{\widetilde{h}}$,
    $\widetilde{h}\in \Psi(h)$. By moving, if necessary, to a smaller
    element $g\in \Psi$, we can assume from the start that the domain
    of every element of $\Psi(g)$ has the same affine hull:
    \begin{displaymath}
      L \set \aff\dom{g} = \aff\dom{h}, \quad h\in \Psi(g). 
    \end{displaymath}
    
    Let $\mu$ be a Borel probability measure on $L$ having a strictly
    positive density with respect to the Lebesgue measure on $L$. For
    $h\in \Psi(g)$, we denote
    \begin{displaymath}
      \alpha(h) \set \sup_{\widetilde{h} \in\Psi(h)}
      \cbraces{\mu\cbraces{\dom{\widetilde{h}} \setminus \dom{h}}} =
      \sup_{\widetilde{h} \in\Psi(h)} 
      \cbraces{\mu(\dom{\widetilde{h}}) - \mu(\dom{h})}. 
    \end{displaymath}
    We define a sequence $(h_n)$ in $\Psi(g)$ such that $h_1 = g$,
    $h_{n+1} \in \Psi(h_n)$, and
    $ \mu\cbraces{\dom{{h_{n+1}}}\setminus \dom{h_n}} \geq
    \alpha(h_n)/2$.
    We have that
    \begin{align*}
      \alpha(h_{n+1})
      \leq \alpha(h_n) -
      \mu\cbraces{\dom{{h_{n+1}}}\setminus \dom{h_n}}
      \leq \frac12 \alpha(h_n) \leq \cbraces{\frac1{2}}^{n} \alpha(g) \to 0. 
    \end{align*}
    Being bounded below by $\frac12 S(x,x)$, the functions $(h_n)$
    converge pointwise to a convex (not necessarily closed) function
    $\widetilde{h}$. Clearly, the closure $h$ of $\widetilde{h}$
    belongs to $\Psi(g)$ and has the property: $\alpha(h) =0$.

    By the above arguments, we can assume from the start that
    $\alpha(g) = 0$ or, equivalently, that
    \begin{displaymath}
      D\set \ri \dom(g) =  \ri \dom{h}, \quad h\in \Psi(g).  
    \end{displaymath}
    We take a Borel probability measure $\nu$ on $D$ having a strictly
    positive density with respect to the Lebesgue measure on $D$ and
    such that
    \begin{displaymath}
      \int (g(x) - \frac12 S(x,x)) d\nu < \infty. 
    \end{displaymath}
    For $h\in \Psi(g)$, we denote
    \begin{displaymath}
      \beta(h) \set \sup_{\widetilde{h} \in\Psi(h)}
      \nu\cbraces{h-\widetilde h}. 
    \end{displaymath}
    We define a sequence $(h_n)$ in $\Psi(g)$ such that $h_1 = g$,
    $h_{n+1} \in \Psi(h_n)$, and
    $\nu\cbraces{h_n - h_{n+1}} \geq \beta(h_n)/2$.  We have that
    \begin{align*}
      \beta(h_{n+1})
      \leq \beta(h_n) - \nu\cbraces{h_n - h_{n+1}} 
      \leq \frac12 \beta(h_n) \leq \cbraces{\frac1{2}}^{n} \beta(g)
      \to 0.  
    \end{align*}
    The functions $(h_n)$ converge pointwise on the relatively open
    $D$ to a finite convex function $h$ on $D$. We extend $h$ by
    semi-continuity to $\cl D$ and set its values to $\infty$ outside
    of $\cl{D}$. We obtain a closed convex function.  Clearly,
    $h \in \Psi(g)$ and $\beta(h) =0$. If $\widetilde{h} \in \Psi(h)$,
    then $\widetilde{h} = h$ on $D$, the common relative interior of
    their domains. Being closed convex functions, $h$ and
    $\widetilde{h}$ are identical.  Hence, $h$ is a minimal element of
    $\Psi(g)$.
  \end{proof}

  \begin{Claim}
    \label{claim:2}
    If $h$ is a minimal element of ${\Psi}$, then
    \begin{displaymath}
      h^*(Sx) \geq \frac12 S(x,x), \quad x\in \real{d}. 
    \end{displaymath}
  \end{Claim}
  \begin{proof}
    Assume to the contrary that $h^*(Sz) < \frac12 S(z,z)$ for some
    $z\in \real{d}$ and take first $z=0$, so that
    \begin{displaymath}
      -h^*(0) = \inf_{y\in \real{d}} h(y) > 0. 
    \end{displaymath}
    Let $u(x) \set h(x) \ind{x\not=0}$ and
    $\mu \in \mathcal{P}_f(\real{d})$.  We decompose $\mu$ as
    \begin{displaymath}
      \mu = t \delta_0 + (1-t) \nu, 
    \end{displaymath}
    where $t\in [0,1]$, $\delta_0$ is the Dirac measure at $0$, and
    $\nu \in \mathcal{P}_f(\real{d})$ is such that
    $\nu(\braces{0}) = 0$. If $t=1$, then $\mu = \delta_0$ and
    \begin{displaymath}
      \mu(u) = \delta_0(u) = u(0) = 0 = \frac12 S(\delta_0(x),\delta_0(x)) =
      \frac12 S(\mu(x),\mu(x)). 
    \end{displaymath}
    If $0\leq t<1$, then by the convexity of $h$ and the fact that
    $h\geq 0$,
    \begin{align*}
      \mu(u) & = (1-t) \nu(h) \geq (1-t)h(\nu(x))  \geq
               \frac{1-t}2 \max\cbraces{0, S(\nu(x),\nu(x))} \\  
             & = \frac{1}{2(1-t)} \max\cbraces{0,
               S(\mu(x),\mu(x))} 
               \geq \frac12 S(\mu(x),\mu(x)). 
    \end{align*}
    Implication \ref{item:12} $\implies$ \ref{item:13} shows that
    $\clconv{u}\in \Psi$. As $\clconv{u} \leq u\leq h$ and
    $u(0) = 0 < h(0)$, we get a contradiction with the minimality of
    $h$.
   
    The case of general $z$ is reduced to the one above.  It is easy
    to see that the function
    \begin{displaymath}
      f(x) = h(x+z) - S(x,z) -
      \frac12 S(z,z), \quad x\in \real{d},   
    \end{displaymath}
    is a minimal element of $\Psi$.  However,
    \begin{align*}
      {f}^*(0) & = \sup_{x\in \real{d}}
                 \cbraces{-f(x)} =
                 \sup_{x\in \real{d}}
                 \cbraces{S(x,z) +
                 \frac12 S(z,z) - h(x+z)} \\ 
               & =  h^*(Sz) - \frac12 S(z,z) < 0,  
    \end{align*}
    and we again arrive to a contradiction.
  \end{proof}

  We are ready to finish the proof of the implication. By
  Claim~\ref{claim:1} we can assume from the start that $f$ is a
  minimal element of $\Psi$. Claim~\ref{claim:2} then
  yields~\eqref{eq:24}.  Theorem~\ref{th:11} shows that
  \begin{displaymath}
    G\set \descr{x\in \real{d}}{f^*(Sx) = \frac12 S(x,x)}\in
    \mset{S}.
  \end{displaymath}
  Being a closed convex function, $f$ is the conjugate of $f^*$. It
  follows that
  \begin{align*}
    f(x) & = \sup_{y\in \real{d}}
           \cbraces{\ip{x}{Sy}  - f^*(Sy)} = \sup_{y\in \real{d}}
           \cbraces{S(x,y)  - f^*(Sy)} \\
         & \geq \sup_{y\in G} \cbraces{S(x,y)  - \frac12S(y,y)} =
           \psi_G(x), \quad x\in \real{d}.   
  \end{align*}
  By Theorem~\ref{th:12}, $\psi_G \in \Psi$. The minimality of $f$
  implies that $f=\psi_G$.
\end{proof}

Finally, we describe the structure of the subdifferential of $\psi_G$
in terms of the projection function on $G$ in the $S$-space. Recall
from~\eqref{eq:7} that
\begin{align*}
  P_G(y) & \set \argmin_{x\in G} S(x-y,x-y) = 
           \argmax_{x\in G} \cbraces{S(x,y) - \frac12 S(x,x)}  \\
         & = \descr{x\in
           G}{\psi_G(y) = S(x,y) - \frac12 S(x,x)}, \quad
           y\in \real{d}.  
\end{align*}
The projection $P_G(y)$ is a closed (possibly empty) subset of $G$.

For a convex closed set $C \subset \real{d}$ and a point $x\in C$, we
denote by $N_C(x)$ the normal cone to $C$ at $x$:
\begin{displaymath}
  N_C(x) \set \descr{s \in \real{d}}{\ip{s}{y-x} \leq 0 \text{ for all } y\in C}. 
\end{displaymath}
We recall that $N_C(x) = 0$ if and only if $x$ belongs to the interior
of $C$. Otherwise, $N_C(x)$ is an unbounded closed convex cone.

For $A\subset \real{d}$, we write
\begin{displaymath}
  SA \set \descr{Sx}{x\in A}. 
\end{displaymath}

\begin{Theorem}
  \label{th:14}
  Let $S \in \msym{d}{m}$ and $G\in \mset{S}$. The multi-functions
  $P_G$ and $\partial \psi_G$ have the same domain $D$ and are related
  as
  \begin{equation}
    \label{eq:29}
    P_G(y) = \descr{x\in G}{Sx \in \partial
      \psi_G(y)} = \descr{x\in G}{y \in \partial
      \psi^*_G(Sx)}.
  \end{equation}
  With $N_{\cl{D}}$ being the normal cone to the closure of $D$, we
  have that
  \begin{equation}
    \label{eq:30}
    \partial \psi_G(y) =\clconv\cbraces{SP_G(y)} + N_{\cl{D}}(y),
    \quad y\in D. 
  \end{equation}
  In particular, if $D$ has a nonempty interior, then
  \begin{displaymath}
    \partial \psi_G(y) = \conv\cbraces{SP_G(y)}, \quad
    y\in \interior{D} = \interior{\dom{\psi_G}}.
  \end{displaymath}
\end{Theorem}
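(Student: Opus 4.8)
The plan is to read off the set description~\eqref{eq:29} from the conjugacy relations~\eqref{eq:3}, deduce $\dom P_G\subseteq\dom\partial\psi_G$, and then prove~\eqref{eq:30} by two inclusions, the harder of which also yields $\dom\partial\psi_G\subseteq\dom P_G$. For~\eqref{eq:29}: by~\eqref{eq:3} applied to $\psi_G$ one has $Sx\in\partial\psi_G(y)\iff y\in\partial\psi_G^*(Sx)\iff\psi_G(y)+\psi_G^*(Sx)=S(x,y)$, and Theorem~\ref{th:12} gives $\psi_G^*(Sx)=\frac12 S(x,x)$ exactly when $x\in G$; hence for $x\in G$ the last equality reads $\psi_G(y)=S(x,y)-\frac12 S(x,x)$, i.e.\ $x\in P_G(y)$ by~\eqref{eq:7}, while Theorem~\ref{th:12} also gives $P_G(y)\subseteq G$. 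In particular $SP_G(y)\subseteq\partial\psi_G(y)$, so $\dom P_G\subseteq\dom\partial\psi_G$. For the inclusion $\supseteq$ in~\eqref{eq:30}: $\partial\psi_G(y)$ is closed and convex, so $\clconv\cbraces{SP_G(y)}\subseteq\partial\psi_G(y)$; and since $\psi_G$ is proper, closed and convex, $\cl D=\cl(\dom\psi_G)$, so every $s\in N_{\cl D}(y)$ satisfies $\ip{s}{y'-y}\le 0$ for $y'\in\dom\psi_G$, and adding this to the subgradient inequality for $\psi_G$ gives $z\in\partial\psi_G(y)\implies z+s\in\partial\psi_G(y)$ (trivial where $\psi_G=+\infty$). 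Thus $\clconv\cbraces{SP_G(y)}+N_{\cl D}(y)\subseteq\partial\psi_G(y)$ whenever $P_G(y)\neq\emptyset$.

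The crux is the reverse inclusion $\subseteq$ in~\eqref{eq:30}, and it will simultaneously give $\dom\partial\psi_G\subseteq\dom P_G$. The starting point is the identity $\psi_G^*=\clconv h$, where $h(Sx)\set\frac12 S(x,x)$ for $x\in G$ and $h\set+\infty$ on $\real{d}\setminus SG$: indeed $\psi_G=h^*$ by definition, and by Theorem~\ref{th:12} one has $h=\psi_G^*$ precisely on $SG$, so $\psi_G^*=h^{**}=\clconv h$. Fix $y\in\dom\partial\psi_G$ and $z\in\partial\psi_G(y)$; then the affine map $a(z')\set\ip{z'}{y}-\psi_G(y)$ minorizes $\psi_G^*=\clconv h$ and is exact at $z$, so its graph is a supporting hyperplane of the epigraph of $\psi_G^*$, which is the closed convex hull of the epigraph of $h$. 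A facial argument for closed convex hulls places $z$ in the closed convex hull of the exposed set $\descr{z'\in SG}{a(z')=h(z')}$, enlarged by the recession directions of $\dom\psi_G^*$ lying along that face. By~\eqref{eq:29}, $\descr{z'\in SG}{a(z')=h(z')}=\descr{Sx}{x\in G,\ \psi_G(y)=S(x,y)-\frac12 S(x,x)}=SP_G(y)$, and the recession contribution is exactly $N_{\cl D}(y)$, by the standard identification of the recession cone of the set $\partial\psi_G(y)=\descr{z}{\psi_G^*(z)-\ip{z}{y}=-\psi_G(y)}$ with the normal cone of $\cl(\dom\psi_G)$ at $y$. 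Hence $z\in\clconv\cbraces{SP_G(y)}+N_{\cl D}(y)$; in particular this set is nonempty, forcing $P_G(y)\neq\emptyset$. Together with the previous paragraph this yields $\dom P_G=\dom\partial\psi_G=D$ and the formula~\eqref{eq:30}.

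Finally, if $D$ has nonempty interior and $y\in\interior D=\interior(\dom\psi_G)$, then $\partial\psi_G(y)$ is nonempty, convex and \emph{bounded}, so by~\eqref{eq:29} the closed set $SP_G(y)$ is bounded, hence compact, so $\conv\cbraces{SP_G(y)}$ is already closed, while $N_{\cl D}(y)=\braces{0}$; thus~\eqref{eq:30} collapses to $\partial\psi_G(y)=\conv\cbraces{SP_G(y)}$. The step I expect to be the main obstacle is the facial/recession argument in the crux paragraph: since $G$ — and with it the active set $SP_G(y)$ — need not be bounded, both the closure in $\clconv\cbraces{SP_G(y)}$ and the normal-cone correction $N_{\cl D}(y)$ are genuinely present, and making the decomposition of a subgradient of $\psi_G$ precise when the epigraph of $h$ is not closed is the technical heart of the theorem.
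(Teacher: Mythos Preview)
Your proofs of~\eqref{eq:29}, of the inclusion $\clconv(SP_G(y))+N_{\cl D}(y)\subseteq\partial\psi_G(y)$, and of the interior case are correct and match the paper. The divergence is entirely in the hard inclusion $\partial\psi_G(y)\subseteq\clconv(SP_G(y))+N_{\cl D}(y)$, and there your ``facial argument'' is a genuine gap rather than a routine step.

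Concretely: you assert that a point $z$ of the exposed face $\{z:\psi_G^*(z)=a(z)\}$ of $\mathrm{epi}\,\psi_G^*=\clconv(\mathrm{epi}\,h)$ decomposes as an element of $\clconv\{z\in SG:h(z)=a(z)\}$ plus a recession direction. This is \emph{not} a standard citable fact about closed convex hulls. A rigorous version would need (i) a Milman-type statement that extreme points of $\clconv(\mathrm{epi}\,h)$ lie in the closed set $\mathrm{epi}\,h$, and (ii) Klee's extension of Minkowski's theorem to write the face as extreme points plus recession cone. Item~(ii) already requires the face to contain no line, i.e.\ $\interior D\neq\emptyset$, and item~(i) is delicate when $\mathrm{epi}\,h$ is unbounded (the usual Milman theorem is stated for compact sets). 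You flag this correctly as the main obstacle, but you do not resolve it, and you also omit the reduction needed when $\interior D=\emptyset$.

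The paper avoids this entirely by working on the primal side. Assuming first that $\interior D\neq\emptyset$, it shows directly that $S^{-1}\nabla\psi_G(y)\in G$ wherever $\psi_G$ is differentiable: if not, the strict minimizer $x_0=S^{-1}\nabla\psi_G(y)$ of $x\mapsto\psi_G^*(Sx)-S(x,y)$ would lie at positive distance from $G$, contradicting $\psi_G(y)=\sup_{x\in G}\cbraces{S(x,y)-\tfrac12 S(x,x)}$. Hence every cluster point of $\nabla\psi_G$ at $y$ lies in $SP_G(y)$, and Rockafellar's Theorem~25.6 gives $\partial\psi_G(y)=\clconv L(y)+N_{\cl D}(y)\subseteq\clconv(SP_G(y))+N_{\cl D}(y)$ in one stroke. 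The case $\interior D=\emptyset$ is then handled by translating so that $0\in G$ and restricting the bilinear form $S$ to the affine hull of $\dom\psi_G$, where the interior is nonempty. If you want to keep your dual viewpoint, you will need to supply the missing Milman--Klee argument in full; otherwise the gradient/cluster-point route is both shorter and self-contained modulo a single citation.
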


\begin{proof}
  Let $x, y \in \real{d}$.  By the properties~\eqref{eq:3} of
  subdifferentials of closed convex functions,
  \begin{displaymath}
    Sx \in \partial \psi_G(y) \iff y\in \partial
    \psi_G^*(Sx) \iff \psi_G(y) + \psi^*_G(Sx) = S(x,y).
  \end{displaymath}
  Theorem~\ref{th:12} states that $x\in G$ if and only if
  $\psi^*_G(Sx) = \frac12 S(x,x)$. Formulas~\eqref{eq:29} for $P_G(y)$
  readily follow.
  
  We denote $D\set \dom {\partial \psi _G}$. From~\eqref{eq:29} we
  deduce that $\dom{P_G} \subset D$ and
  $SP_G(y) \subset \partial \psi_G(y)$, $y\in D$.  Since
  \begin{displaymath}
    \partial \psi_G(y) =\partial \psi_G(y) + N_{\cl{D}}(y), \quad
    y\in D, 
  \end{displaymath}
  we obtain the inclusion:
  \begin{equation}
    \label{eq:31}
    \clconv\cbraces{SP_G(y)} +
    N_{\cl{D}}(y) \subset \partial \psi_G(y),
    \quad y\in \dom{P_G}. 
  \end{equation}

  To prove the opposite inclusion, we assume for a moment that
  \begin{displaymath}
    \interior{D} \not = \emptyset. 
  \end{displaymath}
  We recall that $\interior{D}=\interior{\dom{\psi _G}}$. We also
  recall that for $y\in \interior{D}$, the classical gradient
  $\nabla \psi_G(y)$ exists if and only if $\partial \psi _G (y)$ is a
  singleton. In this case,
  $\partial \psi_G(y) = \braces{\nabla \psi_G(y)}$. We denote
  \begin{displaymath}
    \dom{\nabla \psi_G} \set \descr{y\in \interior{D}}{\nabla
      \psi_G(y) \text{ exists}}. 
  \end{displaymath}
  
  We claim that
  \begin{equation}
    \label{eq:32}
    S^{-1} \nabla \psi _G(y)\in G, \quad y\in \dom{\nabla \psi _G}.
  \end{equation}
  If the claim fails for some $y\in \dom{\nabla \psi _G}$, then there
  is $\epsilon>0$ such that
  \begin{displaymath}
    G \subset \descr{x\in \real{d}}{\abs{x-x_0}\geq \epsilon}, 
  \end{displaymath}
  where $x_0 \set S^{-1} \nabla \psi _G (y)$. As
  \begin{align*}
    \psi_G(y) & = \sup_{x\in \real{d}}\cbraces{\ip{x}{y} -
                \psi_G^*(x)} = \sup_{x\in \real{d}}\cbraces{S(x,y) -
                \psi_G^*(Sx)} \\
              & = \ip{\nabla \psi_G(y)}{y} - \psi_G^*(\nabla
                \psi_G(y)) = S(x_0,y) - \psi_G^*(Sx_0), 
  \end{align*}
  the function $f(x) \set \psi ^*_G(Sx) -S(x,y)$ attains strict
  minimum at $x_0$. The convexity of $f$ yields that
  \begin{displaymath}
    f(x_0) < \inf_{\abs{x-x_0} \geq \epsilon} f(x),
  \end{displaymath}
  and we get the contradiction:
  \begin{displaymath}
    \psi _G (y) > 
    \sup_{\abs{x-x_0}\geq \epsilon} \cbraces{S(x,y)  -
      \psi ^*_G(Sx) }
    \geq 
    \sup_{x\in G} \cbraces{S(x,y)  -
      \frac12 S(x,x)} =\psi _G(y).
  \end{displaymath}
 
  Now, for $y \in {D}$ we denote by $L(y)$ the set of cluster points
  of $\nabla \psi_G$:
  \begin{displaymath}
    L(y)\set\descr{z= \lim \nabla \psi _G(y_n)}{\dom{\nabla \psi
        _G}\ni y_n\rightarrow y}. 
  \end{displaymath}
  From~\eqref{eq:29}, \eqref{eq:32}, and the continuity of
  subdifferentials we obtain that
  \begin{displaymath}
    L(y)\subset SP_G(y), \quad
    y\in {D}. 
  \end{displaymath}
  According to \cite[Theorem 25.6, p.~246]{Rock:70},
  \begin{displaymath}
    L(y) \not = \emptyset \text{ and } 
    \partial \psi_G(y) = \clconv{L}(y) + N_{\cl{D}}(y), \quad
    y\in D. 
  \end{displaymath}
  It follows that
  \begin{displaymath}
    \partial \psi_G(y) \subset \clconv\cbraces{SP_G(y)} + N_{\cl{D}}(y), \quad
    y\in D.
  \end{displaymath}
  Accounting for~\eqref{eq:31}, we obtain~\eqref{eq:30}. In
  particular, $D = \dom{P_G}$.

  To verify the final relation of the theorem, we fix
  $y\in \interior{D}$. Then the subdifferential $\partial \psi_G(y)$
  is bounded together with its subset $SP_G(y)$.  As $SP_G(y)$ is
  closed, we have that
  \begin{displaymath}
    \cl\conv\cbraces{SP_G(y)} = \conv\cbraces{SP_G(y)}.
  \end{displaymath}
  Recalling that $N_{\cl{D}}(y) = 0$ for $y\in \interior{D}$, we
  obtain the result.

  We have proved~\eqref{eq:30} under the assumption that $D$ has a
  nonempty interior. The general case is reduced to this setting by
  the following arguments.  For $x_0\in \real{d}$, we observe that
  $\dom \psi _G=x_0+\dom \psi_{G-x_0}$ and
  \begin{displaymath}
    P_G(y) = x_0+P_{G-x_0}(y-x_0), \;
    \partial \psi _G(y)  =Sx_0+\partial  \psi_{G-x_0}(y-x_0), \quad
    y\in \real{d}. 
  \end{displaymath}
  Therefore, we can assume that
  $0\in G\subset \dom \psi _G\subset \aff \dom \psi _G$.  Under this
  assumption, $F\set \aff\dom \psi _G$ is a linear space. We write
  $F = E + E'$, where
  \begin{align*}
    E' & \set \descr{x'\in F}{S(x',y)=0, \; y\in F}, \\
    E & \set \descr{x\in F}{\ip{x}{x'} = 0, \; x'\in E'},   
  \end{align*}
  and denote by $S_E$ the restriction of the bilinear form $S$ on
  $E$:
  \begin{displaymath}
    S_E(x,y) \set S(x,y), \quad x,y\in E. 
  \end{displaymath}
  The Euclidean projection $G_E$ of $G$ on $E$ is a maximal
  $S_E$-monotone set and the restriction of the Fitzpatrick function
  $\psi _G$ to $E$ is the Fitzpatrick function $\psi_{G_E}$ in the
  $S_E$-space. By construction,
  $\interior{\dom{\psi_{G_E}}} \not = \emptyset$.  Therefore, the
  relation~\eqref{eq:30} holds for $\psi_{G_E}$ in the
  $S_E$-space. Being expressed in the original coordinates, this
  relation takes exactly the form \eqref{eq:30}.
\end{proof}

\section*{Acknowledgments}

We thank Dejan Slep\v{c}ev for pointing out the
reference~\cite{HasStu:89} and the related line of research. We also
thank an anonymous referee for a careful reading and thoughtful
suggestions.

\bibliographystyle{plainnat} \bibliography{../bib/finance}
\end{document}